\DeclareFontFamily{U}{wncy}{}
\DeclareFontShape{U}{wncy}{m}{n}{<->wncyr10}{}
\DeclareSymbolFont{mcy}{U}{wncy}{m}{n}
\DeclareMathSymbol{\Sh}{\mathord}{mcy}{"58} 
\title[elliptic curves and splitting Severi--Brauer varieties]{Elliptic curves and their principal homogeneous spaces: splitting Severi--Brauer varieties}
\author{Eoin Mackall}
\address{Department of Mathematics, University of California, San Diego, La Jolla, CA USA}
\email{etmackall@ucsd.edu}
\author{Nick Rekuski}
\address{Department of Mathematics, Wayne State University, Detroit, MI USA}
\email{rekuski@wayne.edu}
\thanks{The second author is supported by the U.S. Department of Energy, Office of Science, Basic Energy Sciences, under Award Number DE-SC-SC0022134 and an OVPR Postdoctoral Award at Wayne State University.}
\keywords{Brauer groups; Elliptic curves}
\subjclass{16K50; 14H52}
\date{January 18, 2024}
\newtheorem{theorem}{Theorem}
\newtheorem{proposition}[theorem]{Proposition}
\newtheorem{lemma}[theorem]{Lemma}
\newtheorem{corollary}[theorem]{Corollary}
\theoremstyle{remark}
\newtheorem{remark}[theorem]{Remark}
\newtheorem{example}[theorem]{Example}
\theoremstyle{definition}
\newtheorem{definition}[theorem]{Definition}
\numberwithin{theorem}{section}
\newtheorem{thmx}{Theorem}
\newtheorem{exmpx}[thmx]{Example}
\begin{document}

\maketitle

\begin{abstract}
We consider the question: which elliptic curves appear as the Jacobian of a smooth curve of genus one splitting a Severi--Brauer variety? We provide three new examples.

First, we show that if $E$ is any elliptic curve over an algebraically closed field $k$ and if $F/k$ is a perfect field extension, then there exists a principal homogeneous space for $E_F$ splitting a Severi--Brauer variety $X=\mathrm{SB}(A)$ over $F$ if and only if $A$ is Brauer equivalent to a cyclic algebra. Along the way, we also give a uniform proof of a generalization of results due to O'Neil, Clark and Sharif, and Antieau and Auel.

Second, we give an example of an elliptic curve $E$ over a field $k$ together with a central simple algebra $A/k$ of degree $4$ such that $E$ is the Jacobian of a smooth genus one curve $C$ embedded in the Severi--Brauer variety $X=\mathrm{SB}(A)$ as a degree 8 curve and such that $E$ is not the Jacobian of any genus one curve of smaller degree contained in $X$. Our example is, in some sense, as small as possible in both dimension and arithmetic complexity.

Third, we show that for any odd integer $n\geq 3$ there is a central simple algebra $A$ of degree $n$ over a local field $k$ which is split by the principal homogeneous space of an elliptic curve $E/k$ but not by any principal homogeneous space for any quadratic twist of $E$. This generalizes a recent result of Saltman in the case of surfaces to arbitrary even dimension.
\end{abstract}

\tableofcontents
\addtocontents{toc}{\protect\setcounter{tocdepth}{1}} 

\section{Introduction}
Given a Severi--Brauer variety $X$, does there exist a smooth and projective curve $C$ on $X$ of genus one? If the index of $X$ is at most 5, then earlier work on this problem by de Jong and Ho \cite{MR3091612} shows that it is always possible to find such a curve $C$ so long as the dimension of $X$ allows it (i.e.\ if $\mathrm{dim}(X)>1$).
Their result is special in the sense that it relies on a general description of moduli spaces of geometrically elliptic normal curves on Severi--Brauer varieties of low dimensions. In higher dimensions, such nice descriptions of these moduli spaces no longer hold, so new techniques are needed to analyze this question in general.

Closely related to the problem of embedding a genus one curve inside a Severi--Brauer variety $X$ is the \textit{a priori weaker} problem of splitting the associated Brauer class of $X$ by the function field of such a curve. On this front, recent work of Antieau and Auel \cite{https://doi.org/10.48550/arxiv.2106.04291} has shown that over a global field $k$ one can always split Brauer classes of index $6$, $7$, or $10$ by the function field of some genus one curve defined over $k$. More classically, Roquette \cite{MR0201435} has shown that over a $p$-adic local field $k$, a Brauer class of index $n$ is split by a genus one curve $C$ over $k$ if and only if the index of $C$ is a multiple of $n$. Since genus one curves of every index exist over $p$-adic local fields, Roquette completely resolves the splitting problem in this case.

We show in Proposition \ref{prop: splitandembedd} below that these two problems -- \textit{embedding a genus one curve in $X$} and \textit{splitting $X$ by a genus one curve} -- are actually equivalent so long as $\mathrm{dim}(X)\geq 3$. Essentially, if a smooth and projective curve $C$ of genus one splits $X$, then $C$ will embed as a geometrically elliptic normal curve inside \textit{some} Severi--Brauer variety $Y$ which is Brauer equivalent to $X$. By either projecting or embedding, one can then get to every other Brauer equivalent Severi--Brauer variety which has dimension at least $3$.

In general, the embedding problem still seems quite hard. However, if one \textit{rigidifies} the problem by fixing from the start an elliptic curve $E$, then one can say much more.

\begin{thmx} Let $k$ be a field, let $E/k$ be an elliptic curve, and fix an $E$-torsor $C$. For simplicity, suppose that $\alpha\in \mathrm{Br}(k)$ is a Brauer class having index at least $4$. Then the following statements are equivalent:
\begin{enumerate}
\item $C$ splits the Brauer class $\alpha$;
\item $C$ embeds in $X=\mathrm{SB}(A)$ for every central simple $k$-algebra $A$ with $\alpha=[A]$;
\item there exists an algebra $B$ with $[B]=\alpha$ such that $C$ embeds in $Y=\mathrm{SB}(B)$ as a geometrically elliptic normal curve.
\end{enumerate}
\end{thmx}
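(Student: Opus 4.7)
The plan is to reduce the theorem almost entirely to Proposition~\ref{prop: splitandembedd}. Recall that this proposition says, for a Severi--Brauer variety $X$ of dimension at least three, a smooth projective genus one curve $C$ splits $X$ if and only if $C$ embeds into $X$; moreover, any such splitting can be realized as a geometrically elliptic normal curve embedding into \emph{some} Brauer-equivalent Severi--Brauer variety $Y$. The hypothesis $\mathrm{ind}(\alpha)\geq 4$ enters exactly once: every central simple $k$-algebra $A$ with $[A]=\alpha$ satisfies $\deg A\geq \mathrm{ind}(\alpha)\geq 4$, so $\dim\mathrm{SB}(A)\geq 3$, and the proposition applies uniformly to every such $A$.

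With this in hand I would close the cycle $(3)\Rightarrow(1)\Rightarrow(2)\Rightarrow(1)\Rightarrow(3)$. Both $(3)\Rightarrow(1)$ and $(2)\Rightarrow(1)$ are immediate: any embedding of $C$ into a Severi--Brauer variety of Brauer class $\alpha$ produces a $k(C)$-point of that variety at the generic point of $C$, showing that $k(C)$ splits $\alpha$. The implication $(1)\Rightarrow(2)$ is a direct application of the proposition: for each $A$ with $[A]=\alpha$ we have $\dim\mathrm{SB}(A)\geq 3$, so splitting forces embedding. The implication $(1)\Rightarrow(3)$ uses the stronger half of the proposition: the splitting of $\alpha$ by $C$ can be realized by an embedding of $C$ as a geometrically elliptic normal curve in some Brauer-equivalent $Y=\mathrm{SB}(B)$, and any such $B$ automatically satisfies $[B]=\alpha$ since $Y$ is Brauer-equivalent to $\mathrm{SB}(A)$.

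The one piece of real content hidden by this strategy is the proposition itself — in particular, the claim that a splitting curve can always be promoted to a geometrically elliptic normal curve on \emph{some} Severi--Brauer variety in the given Brauer class. This is where I expect the main obstacle to lie, and the intended mechanism (hinted at in the introduction) is to realize $C$ as a normal curve via a suitable degree class in $\mathrm{Pic}_{C/k}$ whose Brauer obstruction to lifting to a genuine line bundle equals $\alpha$, and then to transport between Brauer-equivalent Severi--Brauer varieties by projection from a point or by re-embedding via a higher-degree class. Once the proposition is granted, the theorem above is essentially formal bookkeeping on these implications.
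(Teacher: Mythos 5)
Your proposal is correct and matches the paper's intended derivation: the paper does not give a separate proof of Theorem A but instead explicitly remarks in the introduction that it follows from Proposition~\ref{prop: splitandembedd}, and the chain of implications you sketch (using $\mathrm{ind}(\alpha)\geq 4$ to ensure $\dim\mathrm{SB}(A)\geq 3$ uniformly, and invoking the equivalence of \ref{prop: ds}, \ref{prop: dci}, and \ref{prop: dsbd} to trade a splitting for a closed immersion and for a Severi--Brauer diagram) is exactly the bookkeeping the paper intends. Your one implicit point worth making explicit is that in $(1)\Rightarrow(3)$ the degree $d$ produced by~\ref{prop: dsbd} satisfies $d\geq\mathrm{ind}(\alpha)\geq 4>2$, so the morphism $f$ is a closed immersion and the diagram genuinely yields a geometrically elliptic normal curve rather than a degree-$2$ cover.
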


For a fixed elliptic curve $E$, results of O'Neil, Clark, and many others \cite{MR1924106, MR2163913, MR2384334} allow us to identify the set of all embeddings $C\subset X$ of an $E$-torsor $C$ as a geometrically elliptic normal curve inside a Severi--Brauer variety $X$ of dimension $n$, varying over all $C$ and all $X$, with the Galois cohomology group $\mathrm{H}^1(k,E[n+1])$ of the $(n+1)$-torsion subscheme $E[n+1]$ of $E$. In Theorem A above, we can then add the equivalent statement:

\begin{enumerate}[label= (4)]
\item there exists an element $\xi\in \mathrm{H}^1(k,E[n+1])$ such that $\Delta(\xi)=\alpha$ where $\Delta:\mathrm{H}^1(k,E[n+1])\rightarrow \mathrm{Br}(k)$ is the \textit{Clark--O'Neil obstruction map} sending an element $[C\subset X]$ to $[X]\in \mathrm{Br}(k)$.
\end{enumerate}

\noindent Effectively, the above says that if one is willing to fix the elliptic curve $E$ from the beginning, then determining if an $E$-torsor $C$ embeds into a Severi--Brauer $X$ is equivalent to determining if the Brauer class of $X$ is in the image of some maps in Galois cohomology. This is the perspective that we take in the present paper and, in many ways, we view the work of Antieau--Auel and of Saltman \cite{https://doi.org/10.48550/arxiv.2105.09986} as having a very similar philosophy.

Our first main result is the following:

\begin{thmx}[Theorem \ref{thm:fulllevelinf}] Let $E$ be any elliptic curve over an algebraically closed field $k$. Let $F/k$ be a field extension and assume $F$ is perfect. 

Then there exists a principal homogeneous space for $E_F$ admitting a closed embedding into a Severi--Brauer variety $X=\mathrm{SB}(A)$ over $F$ if and only if $A$ is Brauer equivalent to a cyclic algebra.
\end{thmx}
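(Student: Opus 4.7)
The plan is to reduce the embedding question to a cohomological image problem via Theorem A, and then to compute that image explicitly using the hypothesis that $k$ is algebraically closed.

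By Proposition \ref{prop: splitandembedd} combined with the equivalence $(1)\Leftrightarrow(4)$ in Theorem A, for any Brauer class $\alpha$ with index at least $4$, the existence of a principal homogeneous space of $E_F$ embedding into $\mathrm{SB}(A)$ for some $A$ with $[A]=\alpha$ is equivalent to $\alpha$ lying in the image of the Clark--O'Neil obstruction map $\Delta_n\colon H^1(F, E_F[n]) \to \mathrm{Br}(F)$ for some $n$. Brauer classes of index at most $3$ are classically representable by cyclic algebras (degree $\leq 2$ trivially and degree $3$ by Wedderburn), and can be handled directly by constructing an elliptic-normal-curve embedding of $E_F$ into a Brauer-equivalent Severi--Brauer variety of suitably large degree. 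Thus the theorem reduces to showing that $\bigcup_n \mathrm{Im}(\Delta_n)$ equals the set of cyclic Brauer classes.

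Because $k$ is algebraically closed and $n$ is coprime to $p=\mathrm{char}(k)$, the torsion subgroup scheme $E[n]_k$ is constant and isomorphic to $(\mathbb{Z}/n)^2$, and this descends to $F$. Since $\mu_n \subset k \subset F$ we obtain an $F$-splitting $E_F[n] \cong \mu_n \oplus \mathbb{Z}/n$ under which the Weil pairing $e_n$ becomes (up to sign) the tautological evaluation pairing. Using this splitting, one identifies $\Delta_n$ with the cup product
\begin{equation*}
H^1(F, \mu_n) \otimes H^1(F, \mathbb{Z}/n) \xrightarrow{\ \cup\ } H^2(F, \mu_n) \hookrightarrow \mathrm{Br}(F),
\end{equation*}
which under Kummer theory $H^1(F, \mu_n) \cong F^\times/F^{\times n}$ and the identification of $H^1(F, \mathbb{Z}/n)$ with characters of cyclic extensions sends $(a, \chi_L)$ to the symbol algebra $(a, \chi_L)_n$. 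Because $\mu_n \subset F$, this symbol is exactly the cyclic algebra $(L/F, \sigma, a)$; every cyclic algebra arises this way, so $\mathrm{Im}(\Delta_n)$ is precisely the set of cyclic Brauer classes of exponent dividing $n$. Letting $n$ vary (and choosing $n$ coprime to $\mathrm{char}(F)$, available because $F$ is perfect and every cyclic Brauer class admits such a representative) finishes the argument.

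The main obstacle is the cohomological identification of the Clark--O'Neil obstruction with the Weil-pairing cup product; the authors advertise this as a uniform proof generalizing calculations of O'Neil, Clark--Sharif, and Antieau--Auel, so it is likely the technical core. A natural route is to analyze the boundary map of the Kummer sequence $0 \to E_F[n] \to E_F \xrightarrow{n} E_F \to 0$ in concert with the theta-group extension $1 \to \mathbb{G}_m \to \mathcal{G}(\mathcal{L}) \to E_F[n] \to 1$ attached to a degree-$n$ line bundle $\mathcal{L}$ on $E_F$, and then to extract the cup-product formula via the Weil pairing. A minor secondary point is the case where $n$ is divisible by $\mathrm{char}(F)$, which is bypassed by always recovering cyclic representatives of prime-to-$p$ degree.
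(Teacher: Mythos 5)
Your plan matches the paper's in architecture: reduce the embedding/splitting problem to an image computation for the obstruction map, use the algebraically closed base $k$ to guarantee full level-$n$ structure over $F$, and then identify the image of $\Delta_n$ with cyclic Brauer classes. But there are two substantive gaps.

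First, your identification of $\Delta_n$ with the cup product $H^1(F,\mu_n)\otimes H^1(F,\mathbb{Z}/n)\to H^2(F,\mu_n)$ is not correct as stated when $n$ is even. The paper's Theorem \ref{thm:antieauauel} shows that, even with a symplectic full level-$n$ structure, the obstruction map has the form $\Delta_t([\xi]) = [\chi,b)_n + \delta_s([b])$ where $\delta_s$ is an error term that is $2$-torsion (Lemma \ref{lem: delta}) and in general is a nontrivial quaternion class. The conclusion still goes through in two ways: either one observes (Corollary \ref{cor:cyclic}) that $[\chi,b)_n + [\sigma,b)_2 = [c\chi + (n/2)\circ\sigma, b)_n$ is itself cyclic of degree $n$, or one invokes the extra level-$2n$ structure — automatic here since $k$ is algebraically closed — to kill $\delta_s$ altogether via Theorem \ref{thm:2nstructure}. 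Either way, the clean cup-product formula you posit needs this correction, and the careful handling of $\delta_s$ is precisely the technical contribution the paper advertises in Section \ref{sec:symbols}.

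Second, and more seriously, you have not handled the residue characteristic $p=\mathrm{char}(F)$ in the forward direction. Your remark about ``recovering cyclic representatives of prime-to-$p$ degree'' resolves the converse only. For the forward implication, a principal homogeneous space $C$ splitting $A$ yields a degree-$n$ Severi--Brauer diagram where $n$ is dictated by the geometry (roughly, the period of $C$), and nothing forces $n$ to be coprime to $p$. But if $p\mid n$, the group scheme $E[n]$ is not étale and the full level-$n$ decomposition $E[n]\cong\mathbb{Z}/n\mathbb{Z}\times\mu_n$ fails (badly in the supersingular case), so your cohomological identification collapses. This is exactly what the paper's Lemma \ref{lem:replacep} repairs: using Mumford's diagrams $(**)_n$ and $({*}{*}{*})_n$ for the theta group, one multiplies and divides the degree of the Severi--Brauer diagram to arrive at a diagram of prime-to-$p$ degree representing the same (prime-to-$p$, since $F$ is perfect) Brauer class. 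Without this step, your proof of the forward implication has no way to invoke the full level structure.
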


The universal division algebra of square index at least $4$ over a field of characteristic zero is not Brauer equivalent to a cyclic algebra as Example \ref{exmp: wadsworth} shows, so such curves should embed in relatively few varieties. That such principal homogeneous spaces actually exist in the statement of Theorem B can be deduced from the theorem of Clark and Sharif \cite[Theorem 10]{MR2592017} on the cyclicity of the obstruction map for elliptic curves which have all of their torsion points defined rationally. It can also be deduced from the results of \cite{https://doi.org/10.48550/arxiv.2106.04291} which generalize this statement in a few ways (e.g.\ to level structures, and to the splitting of gerbes). 

We give in Section \ref{sec:symbols} another proof that the obstruction map is cyclic under some assumptions on the torsion subgroup scheme of the fixed elliptic curve. Our proof is distinct from both the proofs of Clark--Sharif and Antieau--Auel and goes along the lines of an original (but incorrect) approach of O'Neil \cite{MR1924106}. Benefits of our approach include an explicit cocycle description for the error term found by Antieau--Auel in the even degree case (see Theorem \ref{thm:antieauauel}) and a new proof that this error term vanishes under an assumption of higher level structure as in the work of Clark--Sharif (see Corollary \ref{thm:2nstructure}). Using these results we also give:\\

\begin{exmpx} We give an example of an elliptic curve $E$ over $\mathbb{Q}_2((t))$ together with a central simple algebra $A/k$ of degree $4$ such that $E$ is the Jacobian of a smooth genus one curve $C$ embedded in the Severi--Brauer variety $X=\mathrm{SB}(A)$ as a degree 8 curve and such that $E$ is not the Jacobian of any genus one curve of smaller degree contained in $X$.\end{exmpx}

The elliptic curve that we consider in this example is the universal elliptic curve with full level-$8$ torsion. The algebra $A$ that we consider was constructed earlier by Brussel \cite{MR4072790} and is a noncyclic degree 4 algebra with a degree 8 cyclic splitting field.

We should point out that, while it's easy to construct smooth curves of genus one which embed in Severi--Brauer varieties as curves of a certain 
non-minimal degree and not of any smaller degree (see Remark \ref{rmk:nonellnorm}), it is difficult to produce examples where there are no principal homogeneous spaces at all for the Jacobian of these curves which admit an embedding of smaller degree. For example, we show in Remark \ref{rmk: embedpadic} that no such examples can exist over $p$-adic local fields and, so, our example is minimal in some arithmetic sense.

Lastly, we show:

\begin{thmx}[Corollary \ref{cor: exmpc}] For any odd integer $n\geq 3$, there exists a central simple algebra $A$ of degree $n$ over a local field $k$ which is split by the principal homogeneous space of an elliptic curve $E/k$ but not by any principal homogeneous space for any quadratic twist of $E$. 

In fact, we can produce infinitely many such fields $k$ and infinitely many such curves $E$ for a given $k$ which fit into an example $(k,A,E)$ for some $A$ as above.
\end{thmx}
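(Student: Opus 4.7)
The plan is to recast the splitting problem entirely in terms of $E(k)$ and its quadratic twists, using three classical tools available over a $p$-adic local field $k$: Roquette's theorem (stated in the introduction), which identifies the Brauer classes split by a genus-one curve $C$ as exactly those whose index divides $\operatorname{ind}(C)$; Lichtenbaum's theorem, which says that for an elliptic curve over a local field the period and index of a torsor agree; and Tate local duality $H^1(k,E)\cong E(k)^{\vee}$, which identifies $H^1(k,E)[n]$ with the Pontryagin dual of $E(k)/nE(k)$. Since $\operatorname{Br}(k)\cong\mathbb{Q}/\mathbb{Z}$ has an element of every order, the problem reduces to exhibiting $(k,E)$ such that $E(k)$ admits a cyclic quotient of order $n$ while $E^{d}(k)/nE^{d}(k)=0$ for every nontrivial class $d\in k^{\times}/(k^{\times})^{2}$; one then picks a Brauer class $\alpha$ of order $n$ and takes $A$ to be any degree-$n$ representative of $\alpha$.

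I would first handle the case $n=\ell$ of an odd prime. Fix a rational prime $p\geq 5$ and a prime power $q=p^{f}$ with $q\equiv 1\pmod{\ell}$ (infinitely many by Dirichlet, and in particular $\gcd(q+1,\ell)=1$). For $q$ sufficiently large, Waterhouse's classification produces an ordinary elliptic curve $\bar E/\mathbb{F}_{q}$ whose group of points is cyclic and has order divisible by $\ell$. Take $k$ to be the unramified extension of $\mathbb{Q}_{p}$ of degree $f$, and let $E/k$ be any good-reduction lift of $\bar E$. The reduction $E(k)\twoheadrightarrow\bar E(\mathbb{F}_{q})$ composed with the projection to the unique cyclic quotient of order $\ell$ provides the desired cyclic quotient of $E(k)$. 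There are three nontrivial quadratic twists, parametrised by a unit non-square $u$, a uniformiser $\pi$, and $\pi u$. The unit twist $E^{u}$ has good reduction equal to the nontrivial $\mathbb{F}_{q}$-twist of $\bar E$, whose point count is $2(q+1)-\#\bar E(\mathbb{F}_{q})$ and is therefore coprime to $\ell$ by our choice of $q$; since $\ell$ is coprime to $p$, the formal group is uniquely $\ell$-divisible and hence $E^{u}(k)/\ell E^{u}(k)=0$. The ramified twists $E^{\pi}$ and $E^{\pi u}$ acquire additive reduction of Kodaira type $I_{0}^{*}$ (forced whenever $p\geq 5$ and $E$ has good reduction, by applying Tate's algorithm to the twisted Weierstrass equation), with component group $(\mathbb{Z}/2)^{2}$; since $\ell$ is odd, both the formal-group and component-group contributions to $E^{d}(k)/\ell E^{d}(k)$ vanish, giving $H^{1}(k,E^{d})[\ell]=0$ by local duality.

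To pass to arbitrary odd $n\geq 3$, fix any odd prime divisor $\ell\mid n$, apply the prime case to obtain $(k,E,A_{0})$ with $A_{0}$ of degree $\ell$, and take $A:=A_{0}\otimes_{k}M_{n/\ell}(k)$; this is a central simple algebra of degree $n$ Brauer equivalent to $A_{0}$, so by Roquette its splitting behavior by torsors is identical to that of $A_{0}$. For the infinitely-many statement, Dirichlet furnishes infinitely many primes $q\equiv 1\pmod{n}$, producing infinitely many fields $k$; and for any fixed such $k$, the one-dimensional formal deformation space of $\bar E$ over $\mathcal{O}_{k}$ provides infinitely many pairwise non-isomorphic good-reduction lifts $E/k$ (distinguished by their $j$-invariants in $k$), all with reduction $\bar E$ and hence all satisfying the conclusion. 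The main obstacle in executing this plan carefully is the reduction-type analysis for the ramified quadratic twists: one must verify rigorously that $E^{\pi}$ has type $I_{0}^{*}$ with component group $(\mathbb{Z}/2)^{2}$ in all cases with $p\geq 5$ and all good-reduction lifts $E$, and then translate the resulting description of $E^{d}(k)$ into the vanishing of $\ell$-torsion in $H^{1}$ through Tate duality. Everything else amounts to bookkeeping with Roquette, Lichtenbaum, and Dirichlet.
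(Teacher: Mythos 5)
Your proposal is correct but goes a genuinely different route from the paper's. The paper proves Corollary~\ref{cor: exmpc} by combining Theorem~\ref{thm:quadtwist} (which shows $\mathrm{H}^1(G_k,E_L[n](k^s))=0$ for any quadratic twist $E_L$ of an $E$ with full level-$n$ structure over a $p$-adic field containing $\mu_n$) with Corollary~\ref{cor:cyclic} and Roquette's theorem; the core of Theorem~\ref{thm:quadtwist} is the impossibility of a dihedral Galois extension $K/k$ of order $2n$ over a $p$-adic field containing $\mu_n$, deduced from Lemma~\ref{lemma:iffExtensionAbelian}. You instead translate everything through Tate local duality $\mathrm{H}^1(k,E)\cong E(k)^\vee$, Lichtenbaum's period-index equality, and Roquette's theorem, reducing the claim to showing $E(k)/\ell E(k)\neq 0$ while $E^d(k)/\ell E^d(k)=0$ for each nontrivial quadratic twist $E^d$; you then establish this by controlling the N\'eron model: good reduction with $\ell\mid\#\bar E(\mathbb{F}_q)$ for $E$, the unramified twist handled by the point-count formula $2(q+1)-\#\bar E(\mathbb{F}_q)\equiv 4\pmod\ell$, and the ramified twists handled by the $I_0^*$ reduction (whose component group is a $2$-group and whose nonsingular special fibre is a $p$-group). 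The two arguments are proving logically equivalent vanishing statements---the Kummer sequence together with Tate duality shows that $\mathrm{H}^1(k,E_L[n])=0$ is equivalent to $E_L(k)/nE_L(k)=0$---but via disjoint toolkits. What your route buys: it is self-contained, avoids level structure and the theta-group/obstruction-map machinery developed earlier in the paper, and makes the quadratic twist constraints visible as concrete facts about N\'eron fibres. What the paper's route buys: it extends immediately to the stronger statement that \emph{every} algebra of index dividing $n$ is split by some torsor of $E$ (Corollary~\ref{cor: exmpc}\,(\ref{cor:1split})) rather than one chosen $A$, it keeps the argument inside the Severi--Brauer-diagram framework that the paper is built on, and it isolates a purely Galois-theoretic obstruction (no nonabelian tame extension of order $2n$) that may apply in settings where N\'eron model information is harder to extract. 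Two small cleanups if you were to write yours out: (i) you do not need $\bar E(\mathbb{F}_q)$ to be cyclic---$\ell\mid\#\bar E(\mathbb{F}_q)$ suffices since $\ell$ is prime, and this also removes the need for the full strength of Waterhouse's classification; (ii) the component group of $E^\pi$ over $k$ is the Frobenius-fixed subgroup of the geometric $(\mathbb{Z}/2)^2$, so it may be smaller---but it is always a $2$-group, which is all you use. You also leave implicit the contribution of the nonsingular locus of the special fibre ($\cong\mathbb{F}_q^+$, a $p$-group), which should be recorded in the final exact-sequence bookkeeping.
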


Saltman \cite{https://doi.org/10.48550/arxiv.2105.09986} gives similar examples in the case of a Severi--Brauer surface. Theorem D can be seen as a generalization of these examples to higher dimension. The key observation allowing us to make this generalization is that Saltman's techniques are secretly making heavy use of the correspondence between what we call ``Severi--Brauer diagrams" and the Galois cohomology of certain torsion subschemes of an elliptic curve. Once one takes this fact into account, Saltman's examples are gotten from observing certain elliptic curves admit principal homogeneous spaces of a given period while their quadratic twists do not. We're then able to prove our result by analyzing the inflation-restriction sequence of cohomology, see Theorem \ref{thm:quadtwist}.

\subsection*{Outline}
This document is structured as follows. Section \ref{sec: prelims} is preliminary and is devoted to a proof of Theorem A. In this section we set-up the correspondence between Severi--Brauer diagrams under an elliptic curve and Galois cohomology of torsion subschemes of the curve, between the splitting problem and the embedding problem, and between splitting by genus one curves and the existence Severi--Brauer diagrams. We conclude by showing how one can construct all of the obstruction maps which have been considered in the literature thus far and we show how to relate them to one another. While this section is mostly a survey in nature, we have opted to include all proofs for this section as they are sometimes missing detail, or are incorrect, in the existing literature.

In Section \ref{sec:symbols}, we give our description of the obstruction map associated to an $n$-torsion cyclic subgroup scheme of an elliptic curve in the case that the characteristic does not divide $n$ and under the assumption that the elliptic curve has full level-$n$ structure. Noteworthy here is both our description of the error term in the case that $n$ is even and our description of the obstruction map without the assumption of a symplectic full level-$n$ structure. We end this section with the constructions of both Theorem B and Example C. An expert that's interested in these examples may be able to start reading from subsection \ref{ss: cyclicsplit} without having to invest too much time into the previous sections.

In Section \ref{sec: conj}, we show how one can reduce from the case of an arbitrary elliptic curve to one with full level-$n$ structure (where everything is understood) by restriction. We interpret the image of the restriction map (to the field where the elliptic curve acquires all of its $n$-torsion points) as a set of isomorphism classes of Galois algebras which are invariant under the conjugation action. This allows us to obstruct the existence of principal homogeneous spaces of a given period by showing the nonexistence of certain Galois extensions of the base field. An easy argument then gives Theorem D.

\subsection*{Notation and Conventions}
  \begin{itemize}
    \item{
     We fix $k$ be a base field and $\overline{k}$ a fixed algebraic closure. Inside $\overline{k}$ we also fix a separable closure $k^s$ of $k$. We typically write $F/k$, $\overline{F}$, and $F^s$ respectively for a field extension of $k$
    }
    \item{
      We let $E$ be an elliptic curve over $k$ with group operation $m:E\times E\to E$
    }
    \item{
      The absolute Galois group of $k$ (resp.\ of a field $F/k$) is written as $G_{k}=\mathrm{Gal}(k^s/k)$ (resp.\ as $G_{F}$).
    }
    \item{
      We typically write $A$ for a $k$-central simple $k$-algebra and $X=\operatorname{SB}(A)$ for the Severi--Brauer variety associated to $A$.
    }
  \end{itemize}

\subsection*{Acknowledgements}
The authors wish to thank Asher Auel, Benjamin Antieau, Danny Krashen, and David Saltman for conversations about various parts of this work. The authors would also like to thank Adrian Wadsworth for the very detailed explanation of the construction in Example \ref{exmp: wadsworth}.

\addtocontents{toc}{\protect\setcounter{tocdepth}{2}} 
  
\section{Severi-Brauer diagrams and obstruction maps}\label{sec: prelims}
Throughout this section we choose a fixed, but arbitrary, base field $k$. For any elliptic curve $E$ over $k$ and for any integer $n\geq 2$ we write $E[n]$ for the kernel subgroup scheme of the multiplication-by-$n$ map of $E$. In this section we describe the relationship between elements of the Galois cohomology group $\mathrm{H}^1(G_k, E[n](k^s))$, where $k^s$ is a separable closure of $k$, and morphisms from $E$-torsors to Severi--Brauer varieties.

\subsection{Severi--Brauer diagrams}
Here we recall an interpretation of the elements of the cohomology group $\mathrm{H}^1(G_k,E[n](k^s))$ in terms of Severi-Brauer diagrams. This interpretation has been used frequently in some contexts, e.g.\ on the period-index problem for genus one curves \cite{MR1924106,MR2163913,MR2592017} and in explicit $n$-descent for elliptic curves \cite{MR2384334}, but it's only been used infrequently in the study of the Brauer group.

\begin{definition}
    \label{def:severiBrauerDiagram}
    Fix an elliptic curve $E/k$ and an integer $n\geq 2$.
    We define a \textit{degree-$n$ Severi-Brauer diagram under $E$} as a triple
    \[
        (C,\,\,\, \mu:E\times C\rightarrow C,\,\,\, f:C\rightarrow X)
    \]
    consisting of a curve $C$, a fixed principal homogeneous structure $\mu$ for $E$ on $C$, and a morphism $f:C\rightarrow X$ to a Severi--Brauer variety $X$ of dimension $\mathrm{dim}(X)=n-1$ satisfying:
    \begin{itemize}
    \item if $n=2$, then $f$ is a degree-$2$ cover of $X$;
    \item if $n>2$, then $f$ is a closed immersion and $\mathrm{deg}(f(C))=n$.
    \end{itemize}
    The degree of a closed subvariety of $X$ is calculated as the degree of the corresponding projective subvariety under any isomorphism $X_{F}\cong\mathbb{P}^{n-1}_{F}$ over some (equivalently any) splitting field $F/k$ of $X$.

    Two degree-$n$ Severi--Brauer diagrams
    \[
      (C,\,\,\,\mu:E\times C\rightarrow C,\,\,\,f:C\rightarrow X)\quad\mbox{and}\quad (C',\,\,\,\mu':E\times C'\rightarrow C',\,\,\,f':C'\rightarrow X')
    \]
    are said to be \textit{equivalent} if there exist $k$-isomorphisms $\phi:C\cong C'$ and $\psi:X\cong X'$ such that the following two diagrams of $k$-schemes commute:
    \[
      \begin{tikzcd}
        E\times C\arrow["\mu"]{r}\arrow["\operatorname{id}\times\phi"]{d} & C\arrow["\phi"]{d} \\
        E\times C'\arrow["\mu'"]{r} & C'
      \end{tikzcd}
      \qquad
      \begin{tikzcd}
        C\arrow["f"]{r}\arrow["\phi"]{d} & X\arrow["\psi"]{d} \\ C'\arrow["f'"]{r} & X'
      \end{tikzcd}.
    \]
Note this implies any fixed degree-$n$ Severi--Brauer diagram $(C,\, \mu,\, f:C\rightarrow X)$ is equivalent to the diagram $(C,\, \mu,\, \phi\circ f:C\rightarrow X)$ for any $k$-automorphism $\phi\in \mathrm{Aut}(X/k)$.

We write $\pi_0(E-\mathsf{SB}_n)(F)$ to denote the set of equivalence classes of degree-$n$ Severi--Brauer diagrams under $E$ that are defined over a field extension $F/k$ (appropriately defined).
  \end{definition}
  
Given any fixed degree-$n$ Severi--Brauer diagram $\xi=(C,\,\mu:E\times C\rightarrow C,\, f:C\rightarrow X)$ over $k$, any other degree-$n$ Severi--Brauer diagram over $k$ can be realized as a twist of $\xi$ as we now explain. Classically, a continuous $1$-cocycle $\alpha:G_k\rightarrow E(k^s)$ determines both the data of a curve $C'$ defined over $k$ and a principal homogeneous structure $\mu':E\times C'\rightarrow C'$ from $(C,\mu)$ by Galois descent. The curve $C'$ has the property that there is a $k^s$-scheme isomorphism $\phi:C'_{k^s}\cong C_{k^s}$ and, for any $\sigma\in G_k$, there is a commuting square 
  \[
    \begin{tikzcd}
      C'_{k^s}\arrow["\phi"]{r}\arrow["\sigma"]{d} & C_{k^s}\arrow["\alpha_\sigma \circ \sigma"]{d} \\
      C'_{k^s}\arrow["\phi"]{r} & C_{k^s}.
    \end{tikzcd}
  \]
  The principal homogeneous structure $\mu'$ on $C'$ is descended from the composition \[E_{k^s}\times C'_{k^s}\xrightarrow{\mathrm{id}\times \phi} E_{k^s}\times C_{k^s}\xrightarrow{\mu_{k^s}} C_{k^s}\xrightarrow{\phi^{-1}} C'_{k^s}.\]
  Moreover, the isomorphism $\phi:C'_{k^s}\cong C_{k^s}$ is naturally an isomorphism of $E_{k^s}$ principal homogeneous spaces.

  Moreover, if $n\geq 2$ is a fixed integer and if the $1$-cocycle $\alpha$ takes values in the $k^s$-points $E[n](k^s)$ of the $n$-torsion subscheme $E[n]$ of $E$, then $\alpha$ additionally determines the equivalence class of a Severi--Brauer variety $X'$ of dimension $n-1$ and a (nonunique) morphism $f':C'\rightarrow X'$ which determines a degree-$n$ Severi--Brauer diagram under $E$ up to equivalence. To be precise, let $t\in \mathbf{Pic}_C^n(k)$ be the image of $\mathcal{O}(1)\in \mathbf{Pic}^1_X(k)$ under the pullback induced by $f$. Pulling back along $\phi$ yields a map \[\phi^*:\mathbf{Pic}_{C_{k^s}}(k^s)\rightarrow\mathbf{Pic}_{C'_{k^s}}(k^s)\] that determines a $k^s$-point $\phi^*(t)\in\mathbf{Pic}^n_{C'_{k^s}}(k^s)$. Then, assuming $\alpha(G_k)\subset E[n](k^s)$, one can observe (as we do below) that the $k^s$-point $\phi^*(t)$ is $G_k$-invariant and, so, it descends to a $k$-rational point of $\mathbf{Pic}^n_{C'}(k)$. Associated to any such $k$-point is a morphism from $C$ to a Severi--Brauer variety $X'$ of dimension $n-1$, unique up to the action of $\mathrm{Aut}(X')$, and this defines an equivalence class of a degree-$n$ Severi--Brauer diagram under $E$ and over $k$. (For more on the correspondence between $k$-rational points on the Picard scheme $\mathbf{Pic}_{C'}$ of $C'$ and morphisms to Severi--Brauer varieties, see \cite[Section (5.4)]{MR0244271} for an original source; see \cite{MR3644253} for a more recent exposition.)
  
  It remains to explain why $\phi^*(t)$ is $G_k$-invariant. To do this, we note that since $E$ acts on $C$ and $C'$ via $\mu$ and $\mu'$, there are induced actions of the opposite group $k$-scheme $E^{op}$ on each of the connected components of the Picard schemes $\mathbf{Pic}_C$ and $\mathbf{Pic}_{C'}$. Over the separable closure $k^s$ of the field $k$, the action on $\mathbf{Pic}_C\times_k k^s$ is defined on $T$-points, for any $k^s$-scheme $T$, by \[\hat{\mu}_{k^s}:E^{op}_{k^s}(T)\times\mathbf{Pic}_{C_{k^s}}(T)\rightarrow \mathbf{Pic}_{C_{k^s}}(T) \quad \mbox{defined by}\quad (x,\mathcal{L})\mapsto \hat{\mu}_{k^s}(x,\mathcal{L}):=T_x^*\mathcal{L}\] where $T_x=(p_1,\mu\circ (x\times \mathrm{id}_C)):T\times_{k^s} C_{k^s}\rightarrow T\times_{k^s} C_{k^s}$ is the left translation-by-$x$ map on $C_T:=T\times_{k^s} C_{k^s}$ and we write $p_1$ for the first projection. This action morphism is Galois equivariant for the appropriate \textit{right} Galois actions, meaning that for each $g\in G_k$ there is a commuting diagram \[\begin{tikzcd}[column sep = huge] E_{k^s}^{op}\times \mathbf{Pic}_{C_{k^s}}\arrow["g_{E^{op}}\times g_{\mathbf{Pic}_C}"]{r}\arrow["\hat{\mu}_{k^s}"]{d} & E_{k^s}^{op}\times \mathbf{Pic}_{C_{k^s}}\arrow["\hat{\mu}_{k^s}"]{d} & (x,\mathcal{L})\arrow[mapsto]{r}\arrow[mapsto]{d} & (g_E^{-1}x, g_C^*\mathcal{L})\arrow[mapsto]{d} \\
  \mathbf{Pic}_{C_{k^s}} & \mathbf{Pic}_{C_{k^s}}\arrow["g_{\mathbf{Pic}_C}^{-1}"]{l} & T_x^*\mathcal{L}  & T_{(g_E^{-1}x)}^* g_C^*\mathcal{L}\arrow[mapsto]{l}\end{tikzcd}
  \] where one uses the relations $g_{\mathbf{Pic}_C}=g_C^*$ and $T_{(g_{E}x)}=g_C\circ T_x \circ g_C^{-1}$ to obtain the square on the right above. It follows that there exists a morphism $\hat{\mu}$ defining an action of $E^{op}$ on $\mathbf{Pic}_C$ over $k$ which becomes $\hat{\mu}_{k^s}$ after extending scalars. One similarly defines the action $\hat{\mu}'$ of $E^{op}$ on $\mathbf{Pic}_{C'}$.
  
Since $E$ is commutative we can identify $E$ with $E^{op}$ and consider $\hat{\mu}$ and $\hat{\mu}'$ as actions by $E$. It follows immediately that $\phi^*$ is equivariant with respect to these actions scalar extended to the separable closure $k^s$. Further, one can check that the $n$-torsion subgroup scheme $E[n]$ acts trivially on the connected components $\mathbf{Pic}_C^n$ and $\mathbf{Pic}^n_{C'}$ (this can checked directly if $\mathrm{char}(k)\nmid n$ or by using \cite[\S23, Theorem 3]{MR0282985} in general). Hence if $\alpha(G_k)\subset E[n](k^s)$, then $\phi^*(t)$ is $G_k$-invariant as claimed. Altogether, one finds:
  
\begin{proposition}[cf.\ O'Neil, {\cite[Proposition 2.2]{MR1924106}}]
    \label{prop:severiBrauerDiagramsBijections}
    Let $(k,E,n,\xi)$ be defined as above, consider the set $\pi_0(E-\mathsf{SB}_n)(k)$ as a set pointed by $\xi$, and assume that one of the following holds:
    \begin{enumerate}[label=(F\arabic*)]
    \item\label{item: f1} $k$ is arbitrary and $\mathrm{char}(k)\nmid n$,
    \item\label{item: f2} or $k$ is perfect.
    \end{enumerate}
    Then, the above association of an equivalence class of a degree-$n$ Severi--Brauer diagram from a continuous $1$-cocycle $\alpha$ induces a well-defined bijection of pointed sets
    \begin{equation}
      \label{eq:tsbcorr}
      T_\xi:\mathrm{H}^1(G_k,E[n](k^s))\rightarrow \pi_0(E-\mathsf{SB}_n)(k).
    \end{equation} By viewing $\pi_0(E-\mathsf{SB}_n)(k)$ as the collection of equivalence classes of twists of the degree-$n$ Severi--Brauer diagram $\xi$, an explicit inverse \begin{equation}
      \label{eq:ssbcorr}S_\xi: \pi_0(E-\mathsf{SB}_n)(k)\rightarrow \mathrm{H}^1(G_k,E[n](k^s))\end{equation} to $T_\xi$ can be constructed through splitting of degree-$n$ Severi--Brauer diagrams. Moreover, both of the maps (\ref{eq:tsbcorr}) and (\ref{eq:ssbcorr}) are natural with respect to restriction to any intermediate field $k\subset F \subset k^s$.
\end{proposition}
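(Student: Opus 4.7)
The plan is to verify that the construction sketched before the proposition produces a well-defined map $T_\xi$ on cohomology classes, to construct an explicit inverse $S_\xi$ by splitting a diagram over $k^s$, to check that both composites are the identity, and finally to observe naturality with respect to restriction.

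For well-definedness of $T_\xi$, I would check that cohomologous $1$-cocycles $\alpha,\alpha':G_k\to E[n](k^s)$ with $\alpha'_\sigma=\beta^{-1}\alpha_\sigma\,\sigma(\beta)$ for some $\beta\in E(k^s)$ produce equivalent diagrams. Translation by $\beta$ on $C_{k^s}$ intertwines the two twisted $G_k$-actions and hence descends to a $k$-isomorphism $\phi_\beta:C_{\alpha'}\xrightarrow{\sim} C_\alpha$ of $E$-torsors. Under the identification of a degree-$n$ morphism to a Severi--Brauer variety with a $k$-rational point of $\mathbf{Pic}^n$, the map $\phi_\beta^*$ differs from translation by $\beta$ on the connected component $\mathbf{Pic}^n_{C_{k^s}}$; since this translation is the action of an element of $E(k^s)$ on $\mathbf{Pic}^n$, it is realized by a $k^s$-automorphism of the corresponding Severi--Brauer variety, and by descent this gives a $k$-automorphism of $X_\alpha$ completing $\phi_\beta$ to an equivalence. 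The pointedness assertion (that the trivial cocycle produces $\xi$) follows from the construction.

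For $S_\xi$, given a diagram $(C',\mu',f':C'\to X')$, one chooses any $k^s$-isomorphism $\phi:C'_{k^s}\xrightarrow{\sim} C_{k^s}$ of $E_{k^s}$-torsors; such a $\phi$ exists by the standard triviality of torsors over a separable closure (using \ref{item: f1} or \ref{item: f2}) and is unique up to translation by $E(k^s)$. Setting $\alpha_\sigma:=\phi\circ{}^\sigma\!\phi^{-1}$, an automorphism of $C_{k^s}$ as a torsor, defines an element of $E(k^s)$ and a continuous $1$-cocycle $\alpha:G_k\to E(k^s)$. The crucial step is to show $\alpha_\sigma\in E[n](k^s)$. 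The morphism $f'$ gives a class $t'\in\mathbf{Pic}^n_{C'}(k)$ whose pullback $\phi^*t'$ lies in $\mathbf{Pic}^n_{C_{k^s}}(k^s)$; since both $\phi^*t'$ and the class $t$ corresponding to $\xi$ are $G_k$-invariant with respect to the respective Galois actions, the relation $\hat\mu_{\alpha_\sigma}^*(\phi^*t')=\phi^*t'$ must hold. The translation action of $E$ on $\mathbf{Pic}^n_{C_{k^s}}$ differs from the trivial action by a homomorphism whose kernel is exactly $E[n]$, forcing $\alpha_\sigma\in E[n](k^s)$. A different choice of $\phi$ changes $\alpha$ by a coboundary of an element of $E(k^s)$, and the same argument shows that element lies in $E[n](k^s)$, so the class of $\alpha$ in $\mathrm{H}^1(G_k,E[n](k^s))$ is well defined.

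That $S_\xi\circ T_\xi$ and $T_\xi\circ S_\xi$ are identities is a direct check by taking the tautological choice of splitting $\phi$ at each stage. Naturality under restriction $k\subset F\subset k^s$ is formal: base change of a descended curve/diagram along $k\to F$ coincides with the curve/diagram descended from the restricted cocycle $\alpha|_{G_F}$, and conversely. The main obstacle, and the step that crucially uses the hypothesis on the degree of $f'$ together with one of \ref{item: f1} or \ref{item: f2}, is showing that the cocycle produced by $S_\xi$ actually lands in $E[n](k^s)$; this rests on identifying the action of translation by $E$ on the degree-$n$ component of the Picard scheme and on the triviality of the $E[n]$-action, which in the inseparable case requires invoking \cite[\S23, Theorem 3]{MR0282985} as in the preceding discussion.
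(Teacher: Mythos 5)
The construction of $S_\xi$ in your proposal has a genuine gap. You choose $\phi:C'_{k^s}\xrightarrow{\sim}C_{k^s}$ only as an isomorphism of $E_{k^s}$-torsors and then claim that the resulting cocycle $\alpha_\sigma=\phi\circ{}^\sigma\!\phi^{-1}$ automatically lands in $E[n](k^s)$ because ``both $\phi^*t'$ and $t$ are $G_k$-invariant with respect to the respective Galois actions.'' This does not follow. Unwinding the definitions gives $\sigma(\phi_*t')=T_{\alpha_\sigma}^*(\phi_*t')$; to conclude $T_{\alpha_\sigma}^*(\phi_*t')=\phi_*t'$ you would additionally need $\phi_*t'$ to be fixed by the \emph{untwisted} $G_k$-action on $\mathbf{Pic}^n_{C_{k^s}}(k^s)$, which is not automatic. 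Concretely, if $\phi_*t'-t=n\gamma\in E(k^s)$ then the relation becomes $n(\sigma\gamma-\gamma)=n\alpha_\sigma$, so $\alpha_\sigma$ agrees with the coboundary $\sigma\gamma-\gamma$ only modulo $E[n](k^s)$, and the coboundary term need not vanish. (Put differently, $\mathrm{H}^1(G_k,E[n])\to\mathrm{H}^1(G_k,E)$ has kernel $E(k)/nE(k)$, so torsor data alone cannot recover the class in $\mathrm{H}^1(G_k,E[n])$; distinct Severi--Brauer diagrams on the same underlying torsor must be separated by the line bundle.)

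The fix---and this is what the paper's proof does---is to split the \emph{entire} Severi--Brauer diagram over $k^s$, not just the torsor: one chooses $\phi$ and also an isomorphism of the target Severi--Brauer varieties so that $\phi_*t'=t$ on the nose, equivalently $\phi^*f^*\mathcal{O}(1)\cong(f')^*\mathcal{O}(1)$. Then $\gamma_\sigma=\phi\circ{}^\sigma\!\phi^{-1}$ fixes $f^*\mathcal{O}(1)$, and \cite[\S23, Theorem 3]{MR0282985} gives $\gamma_\sigma\in E[n]$. This is also precisely where \ref{item: f1}/\ref{item: f2} enter: they guarantee that multiplication by $n$ is surjective on $E(k^s)$-points (via the Kummer sequence~(\ref{seq: picn})), so that one can adjust a torsor isomorphism by a translation to force $\phi_*t'=t$. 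By contrast, the mere existence of a torsor isomorphism $C'_{k^s}\cong C_{k^s}$---which is all you invoke---is automatic for smooth group schemes over a separably closed field and does \emph{not} require either hypothesis, so you have also misattributed the role of \ref{item: f1}/\ref{item: f2}. Your discussion of well-definedness of $T_\xi$ on cohomology classes and of naturality is fine, but the core step producing $S_\xi$ needs the stronger splitting.
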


\begin{proof}
Let $\xi'=(C',\mu',f':C'\rightarrow X')$ be any degree-$n$ Severi--Brauer diagram under $E$ which splits over a finite Galois extension $F/k$. By \textit{split}, we mean that $\xi'_F$ is equivalent to $\xi_F$ so there exist $F$-isomorphisms $\alpha,\beta$ making the diagram \[\begin{tikzcd} C'_F\arrow["f'"]{r}\arrow["\alpha"]{d} & X'_F\arrow["\beta"]{d} \\ C_F\arrow["f"]{r} & X_F  \end{tikzcd}\] commutative. For each $\sigma\in \mathrm{Gal}(F/k)$ we set $\gamma_\sigma= \alpha \circ \sigma_{C'_F} \circ \alpha^{-1} \circ \sigma_{C_F}^{-1}=\alpha\circ {^\sigma\alpha^{-1}}$ to be the given automorphism of $C$ considered as a principal homogeneous space under $E$. The map $\gamma:\mathrm{Gal}(F/k)\rightarrow \mathrm{Aut}_{phs}(C_F,\mu)=E(F)$ defined by $\gamma(\sigma)=\gamma_\sigma$ is then a $1$-cocycle. Since the embeddings $f'$ and $f$ are Galois equivariant, one can check that for each $\sigma\in \mathrm{Gal}(F/k)$, the automorphism $\gamma_\sigma$ fixes $f^*\mathcal{O}(1)=\mathcal{L}$ on $C_F$. This implies that $\gamma_\sigma\in E[n](F)$ due to \cite[\S23, Theorem 3]{MR0282985}.

For any field extension $L/k$, denote by $W_\xi(L)\subset \pi_0(E-\mathsf{SB}_n)(k)$ the subset consisting of those equivalence classes of Severi--Brauer diagrams of degree-$n$ under $E$ which split (to $\xi$) over $L$. The correspondence, defined above, sending $\xi'$ to $\gamma$  sets up an injection from $W_\xi(F)$ to the group cohomology $\mathrm{H}^1(\mathrm{Gal}(F/k),E[n](F))$. Taking limits over the filtered system of inflation maps, one gets an injection \begin{equation} \label{eq:rho} \rho:W_\xi(k^s)\rightarrow \mathrm{H}^1(G_k, E[n](k^s))\end{equation} to the continuous Galois cohomology group. We claim that under either assumption \ref{item: f1} or \ref{item: f2}: we have equality $W_\xi(k^s)=\pi_0(E-\mathsf{SB}_n)(k)$ (i.e.\ every Severi--Brauer diagram splits over the separable closure) and that the map $\rho$ is a surjection.

We first show that every Severi--Brauer diagram splits over the separable closure $k^s$ of $k$ assuming either \ref{item: f1} or \ref{item: f2}. Let $\xi'=(C',\mu',f')$ denote an arbitrary diagram of degree-$n$ under $E$. Using a point $t_C\in C(k^s)$, we can identify $E_{k^s}$ with $\mathbf{Pic}^1_{C_{k^s}}$ by sending the identity $p\in E(k^s)$ to $\mathcal{O}(t_C)$. Similarly we may identify $E_{k^s}$ with $\mathbf{Pic}^n_{C_{k^s}}$ sending $p$ to $\mathcal{O}(t_C)^{\otimes n}$. The multiplication-by-$n$ map on $E$ can then be identified with the exact sequence of group schemes \begin{equation}\label{seq: picn} 0\rightarrow E[n]_{k^s}\rightarrow \mathbf{Pic}^1_{C_{k^s}}\rightarrow \mathbf{Pic}^n_{C_{k^s}}\rightarrow 0\end{equation} where the nontrivial map on the right sends a line bundle $\mathcal{L}$ to $\mathcal{L}^{\otimes n}$. The sequence (\ref{seq: picn}) is exact on $k^s$ points by \cite[(22.15) Proposition]{MR1632779}, if one assumes \ref{item: f1}, or because $k^s$ is an algebraic closure assuming \ref{item: f2}. Thus, the map $f:C\rightarrow X$ has the property that $f_{k^s}^*\mathcal{O}(1)=\mathcal{O}(nx_C)$ for some point $x_C\in C(k^s)$. Similarly, one has that $(f'_{k^s})^*\mathcal{O}(1)=\mathcal{O}(nx_{C'})$ for a point $x_{C'}\in C'(k^s)$.

One can now construct an isomorphism $C'_{k^s}\rightarrow C_{k^s}$ of principal homogeneous spaces under $E_{k^s}$ determined entirely by sending $x_{C'}$ to $x_C$. This isomorphism extends to a splitting of $\xi'$ as desired.

Finally, we take $S_\xi$ to be the map $\rho$ of (\ref{eq:rho}). It's immediate that an explicit inverse to $\rho$ is gotten by the procedure outlined in the paragraphs below Definition \ref{def:severiBrauerDiagram}, showing that $T_\xi$ is a well-defined bijection.
\end{proof}

Among all equivalence classes of degree-$n$ Severi--Brauer diagrams under $E$, there is a seemingly \textit{most canonical} such class. Specifically, we mean the class containing any degree-$n$ Severi--Brauer diagram of the form $(E,\,\,m,\,\,\iota:E\rightarrow \mathbb{P}^{n-1})$ where $m:E\times E\rightarrow E$ is the multiplication map for $E$ and $\iota$ is a fixed morphism corresponding to a complete linear system of $\mathcal{O}(p)^{\otimes n}$ where $p\in E(k)$ is the identity. Accordingly, we call any fixed choice $\xi_0=(E,m,\iota)$ representing this class the \textit{trivial degree-$n$ Severi--Brauer diagram under $E$}. From now on, unless we state otherwise, we will always assume that the set $\pi_0(E-\mathsf{SB}_n)(k)$ is pointed by the fixed trivial diagram $\xi_0$. We will also drop subscripts and write $T=T_{\xi_0}$ and $S=S_{\xi_0}$ for the corresponding maps of Proposition \ref{prop:severiBrauerDiagramsBijections} in this case.

\begin{remark}\label{rmk: wc}
It is well known $\mathrm{H}^1(G_k,E(k^s))$ can be identified with the Weil--Ch\^atelet group $\mathrm{WC}(E/k)$ whose elements are equivalence classes of principal homogeneous spaces under $E$ and defined over $k$, see \cite[Section 2]{MR0106226} or \cite[Chapter X, Theorem 3.6]{MR0817210}.
The composition \[\mathrm{H}^1(G_k,E[n](k^s))\rightarrow \mathrm{H}^1(G_k,E(k^s))\] induced by the inclusion $E[n]\subset E$ then sits in a commutative square of pointed maps \[\begin{tikzcd}\mathrm{H}^1(G_k, E[n](k^s))\arrow{r}\arrow[equals]{d} & \mathrm{H}^1(G_k, E(k^s))\arrow[equals]{d}\\ \pi_0(E-\mathsf{SB}_n)(k)\arrow{r} & \mathrm{WC}(E/k)\end{tikzcd}\]
where the bottom map sends a degree-$n$ Severi--Brauer diagram $[(C,\mu,f)]$ to the class $[(C,\mu)]$.
\end{remark}

\begin{remark}[cf.\ {\cite[Remark 1.10]{MR2384334}}]\label{rmk:Kummer}
For any given principal homogeneous space $(C,\mu)$ under $E$, there can exist multiple, inequivalent degree-$n$ Severi--Brauer diagrams under $E$ for this principal homogeneous space. To see this, consider the K\"ummer sequence of multiplication-by-$n$ on $E$, \begin{equation} \label{eq:Kummer} 0\rightarrow E[n]\rightarrow E\xrightarrow{\cdot n} E\rightarrow 0.\end{equation}
We can identify the middle curve $E$ with the elliptic curve $\mathbf{Pic}^1_E$, pointed by $\mathcal{O}(p)$, and the rightmost curve $E$ with the elliptic curve $\mathbf{Pic}^n_E$, pointed by $\mathcal{O}(p)^{\otimes n}$, via maps \[\phi_1:E\rightarrow \mathbf{Pic}^1_E,\,\,\, x\mapsto \mathcal{O}(x) \quad \mbox{and}\quad \phi_n:E\rightarrow \mathbf{Pic}^n_E,\,\,\, x\mapsto \mathcal{O}(x)\otimes\mathcal{O}(p)^{\otimes (n-1)}.\] Assuming that (\ref{eq:Kummer}) is exact on $k^s$-points, there is then an associated ladder of exact sequences in cohomology \begin{equation} \label{eq: cohKummer}\begin{tikzcd} 
E(k)\arrow[equals, "\phi_n(k)"]{d}\arrow{r} & \mathrm{H}^1(G_k,E[n](k^s))\arrow[equals]{d}\arrow{r} & \mathrm{H}^1(G_k,E(k^s))[n]\arrow[equals, "\phi_{1,*}"]{d} \\
\mathbf{Pic}^n_E(k)\arrow["\delta"]{r} & \mathrm{H}^1(G_k, E[n](k^s))\arrow{r} & \mathrm{H}^1(G_k, \mathbf{Pic}_E^1(k^s))[n]
\end{tikzcd}\end{equation} that can be interpreted using Remark \ref{rmk: wc}. Namely, the equivalence class of a degree-$n$ Severi--Brauer diagram $[(C,\mu,f)]$ maps to the identity of the group $\mathrm{H}^1(G_k,E(k^s))$ if and only if $(C,\mu)$ is isomorphic with $(E,m)$ as a principal homogeneous space under $E$ if and only if $C(k)\neq \emptyset$ if and only if $C\cong E$.

The map $\delta$ of (\ref{eq: cohKummer}) is defined by sending a point $t\in \mathbf{Pic}_E^n(k)$, corresponding to a degree-$n$ line bundle $\mathcal{L}_t$ on $E$, to the element corresponding to the class of the Severi--Brauer diagram $[(E,m,f_t)]$ where $f_t:E\rightarrow \mathbb{P}^{n-1}$ is any morphism determined by the complete linear system of $\mathcal{L}_t$. The element $\delta(t)$ is trivial in $\mathrm{H}^1(G_k,E[n](k^s))$ if and only if $\mathcal{L}_t\cong \mathcal{O}(q)^{\otimes n}$ for a point $q\in E(k)$.
\end{remark}

\subsection{Morphisms to Severi--Brauer varieties}
Let $k$ be a field and $C$ a curve over $k$. If $X$ is a Severi--Brauer variety over $k$ admitting a morphism $C\rightarrow X$, then the Severi--Brauer variety $X_{k(C)}$, over the function field $k(C)$ of $C$, is necessarily split since it contains a $k(C)$-rational point. Conversely, if $C$ is smooth, proper, and if $X$ splits over the function field $k(C)$, then any $k(C)$-rational point on $X_{k(C)}$, corresponding to a rational map $C\dashrightarrow X$, extends to a morphism $C\rightarrow X$.

When $C$ is a smooth, proper, and geometrically connected curve having arithmetic genus $g(C)=1$, then both of the conditions above are equivalent to the existence of a \textit{containment} $C\subset X$ so long as $\mathrm{dim}(X)\geq 3$. We make this more precise with the following:

\begin{proposition}\label{prop: splitandembedd}
Let $k$ be any field, let $X$ be a Severi--Brauer variety over $k$, and let $C$ be a smooth, proper, geometrically connected $k$-curve of genus $g(C)=1$. Write $E=\mathbf{Pic}_C^0$ for the connected component of the Picard scheme of $C$ containing the identity, and let $\mu:E\times C\rightarrow E$ be a principal homogeneous structure of $E$ on $C$.
If $\mathrm{dim}(X)\geq 3$, then the following statements are equivalent:
\begin{enumerate}[label=(A\arabic*)]
\item\label{prop: ds} $C$ splits $X$, i.e.\ $[X]\in \mathrm{Br}(k)$ is in the kernel of the restriction from $k$ to $k(C)$.
\item\label{prop: dci} There exists a closed immersion $i:C\rightarrow X$.
\item\label{prop: dsbd} There exists an integer $d\geq 2$, a Severi--Brauer variety $Y$ with $[Y]=[X]\in \mathrm{Br}(k)$, and a morphism $f:C\rightarrow Y$ such that $(C,\mu,f)$ defines a degree-$d$ Severi--Brauer diagram under $E$.
\end{enumerate}
\end{proposition}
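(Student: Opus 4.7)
The plan is to establish the cycle \ref{prop: dci} $\Rightarrow$ \ref{prop: dsbd} $\Rightarrow$ \ref{prop: ds} $\Rightarrow$ \ref{prop: dci}, with most of the work in the final implication. The implications \ref{prop: dsbd} $\Rightarrow$ \ref{prop: ds} and \ref{prop: dci} $\Rightarrow$ \ref{prop: ds} are immediate: any morphism from $C$ to a Severi--Brauer variety produces a $k(C)$-rational point there, splitting the associated Brauer class over $k(C)$, and when the target has class $[X]$ this splits $X$ as well. The implication \ref{prop: dci} $\Rightarrow$ \ref{prop: dsbd} is almost as easy: given a closed immersion $i:C\hookrightarrow X$, the pullback $\mathcal{L}:=i^{*}\mathcal{O}_X(1)$ is a $k$-rational point of $\mathbf{Pic}^d_C$ with $d=\deg\mathcal{L}\geq 3$ (since a line bundle on a smooth genus-one curve is very ample exactly when it has degree at least three, and any closed immersion forces $\mathcal{L}$ to be very ample), so $(C,\mu,i)$ is itself a degree-$d$ Severi--Brauer diagram under $E$.

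For \ref{prop: ds} $\Rightarrow$ \ref{prop: dsbd}, the $k(C)$-point on $X$ provided by \ref{prop: ds} gives a rational map $C\dashrightarrow X$ which extends to a morphism $f:C\to X$ since $C$ is smooth and $X$ is proper. The class $\mathcal{L}:=f^{*}[\mathcal{O}_X(1)]\in\mathbf{Pic}^d_C(k)$ has Brauer obstruction equal to $[X]$, by functoriality of the Kummer-type obstruction map $\mathbf{Pic}_{\bullet}(k)\to\mathrm{Br}(k)$ applied to $f$. If $d<3$, we tensor $\mathcal{L}$ with any $\mathcal{M}\in\mathrm{Pic}(C)$ of positive degree (such $\mathcal{M}$ exists, of degree equal to the index of $C$); since $[\mathcal{M}]=0$ in $\mathrm{Br}(k)$, the Brauer obstruction of $\mathcal{L}\otimes\mathcal{M}$ is still $[X]$ while its degree is at least three. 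The complete linear system of the (possibly-modified) $\mathcal{L}$, taken over a splitting field of $X$ and then descended, yields a closed immersion $C\hookrightarrow Y$ of $C$ as an elliptic normal curve into a Severi--Brauer variety $Y$ of dimension $d-1$ with $[Y]=[X]$, supplying the required degree-$d$ Severi--Brauer diagram.

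Finally, for \ref{prop: dsbd} $\Rightarrow$ \ref{prop: dci}, we transport the embedding $C\hookrightarrow Y$ into $X$ using $[Y]=[X]$. If $\dim Y\leq\dim X$, there is a $k$-defined closed immersion $Y\hookrightarrow X$ as a linear subvariety of Brauer-equivalent Severi--Brauer varieties (arising, e.g., from a matrix-block inclusion of the underlying algebras). If $\dim Y>\dim X$, one uses a linear projection $Y\dashrightarrow X$ from a $k$-rational linear subvariety $Z\subset Y$ of codimension $\dim X+1$. The main obstacle, and the only place where the hypothesis $\dim X\geq 3$ is used, is choosing $Z$ over $k$ to be disjoint from $C$ and such that the resulting map $C\to X$ is a closed immersion; this is a Bertini-type argument (generic linear projection of a smooth curve into a projective space of dimension at least three is an embedding), handled by density of $k$-rational points in the appropriate Grassmannian over infinite $k$, and trivially over finite $k$ since every Severi--Brauer variety over a finite field is split by Wedderburn's theorem.
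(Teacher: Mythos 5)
Your overall strategy mirrors the paper's: \ref{prop: dci}, \ref{prop: dsbd} $\Rightarrow$ \ref{prop: ds} are immediate, \ref{prop: ds} $\Rightarrow$ \ref{prop: dsbd} goes via pullback of $\mathcal{O}(1)$ and a complete linear system, and \ref{prop: dsbd} $\Rightarrow$ \ref{prop: dci} goes via twisted-linear inclusions and projections. Two places do not quite go through, and one place differs substantively from the paper.

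First, the quick claim for \ref{prop: dci} $\Rightarrow$ \ref{prop: dsbd} is wrong as stated: the triple $(C,\mu,i)$ is a degree-$d$ Severi--Brauer diagram (Definition \ref{def:severiBrauerDiagram}) only if $\dim X = d-1$, i.e.\ only if $i$ is an embedding by a complete linear system. A closed immersion $i:C\hookrightarrow X$ will in general have $\deg i^*\mathcal{O}_X(1) > \dim X+1$, so one must re-embed $C$ via the complete linear system of $i^*\mathcal{O}_X(1)$ to obtain a genuine diagram. This is harmless for the overall logic (you also note \ref{prop: dci} $\Rightarrow$ \ref{prop: ds}, and you prove \ref{prop: ds} $\Rightarrow$ \ref{prop: dsbd}), but the statement as written is incorrect.

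Second, and more seriously, your argument for \ref{prop: dsbd} $\Rightarrow$ \ref{prop: dci} starts by ``transporting the embedding $C\hookrightarrow Y$,'' but \ref{prop: dsbd} permits $d=2$, in which case Definition \ref{def:severiBrauerDiagram} makes $f:C\to Y$ a degree-$2$ cover of a Severi--Brauer \emph{curve}, not a closed immersion. Composing a degree-$2$ cover with $Y\hookrightarrow X$ never yields a closed immersion. You must first replace the diagram with one of higher degree. This is precisely the role of the paper's multiplication-by-$5$ step (used more generally when $\mathrm{ind}(X)\leq 3$); alternatively, since your proof of \ref{prop: ds} $\Rightarrow$ \ref{prop: dsbd} always produces $d\geq 3$, you could route \ref{prop: dsbd} $\Rightarrow$ \ref{prop: ds} $\Rightarrow$ (\ref{prop: dsbd} with $d\geq 3$) $\Rightarrow$ \ref{prop: dci}, but you should say so explicitly.

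On the substantive difference in \ref{prop: dsbd} $\Rightarrow$ \ref{prop: dci}: you project in one shot from a $k$-rational twisted-linear $Z\subset Y$ of codimension $\dim X+1$, whereas the paper iterates projections from copies of the minimal Brauer-equivalent Severi--Brauer variety $P$ and then treats $\dim P\leq 2$ by a separate workaround. Your one-shot projection is conceptually cleaner and removes the paper's case split, once the $d=2$ issue above is repaired. One caveat worth stating: the moduli space of the centers $Z$ is not a Grassmannian but a \emph{generalized Severi--Brauer variety} (a twist of a Grassmannian parametrizing right ideals of the correct reduced dimension); these do have Zariski-dense $k$-points when nonempty, and nonemptiness holds here because $\dim X+1$ and $d$ are both multiples of $\mathrm{ind}(X)$, but the ``density in the Grassmannian'' phrasing glosses over the twist. (The paper's ``generic $P\subset Y$'' phrasing has the same gap and could also be made more precise.) Finally, your modification of $\mathcal{L}$ for $d<3$ in \ref{prop: ds} $\Rightarrow$ \ref{prop: dsbd} isn't strictly necessary (degree-$2$ diagrams are permitted), but it is harmless and in fact supplies the $d\geq 3$ needed to repair the second gap.
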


\begin{proof}
The proposition is well-known, or readily deducible, in the case that $X$ is split, equivalently $X\cong \mathbb{P}^{n-1}$. We can therefore assume that $X$ is not split throughout the proof. If $C$ splits $X$, then $X_{k(C)}(k(C))\neq \emptyset$. Correspondingly, there is some morphism $j:C\rightarrow X$. Pulling back, one gets an associated $k$-rational point \begin{equation}\label{eq:point} j^*:\mathbf{Pic}^1_X(k)\rightarrow \mathbf{Pic}^d_C(k)\end{equation} as the image of the line bundle $\mathcal{O}(1)$ on $X_{k^s}$ considered as the sole element of $\mathbf{Pic}^1_X(k)$, and for some $d\in \mathbb{Z}$. Since $X$ is not split, we find $d\geq 2$.

Since $d\geq 2$, the corresponding $k$-rational point in $\mathbf{Pic}_C^d(k)$ defines a morphism $f:C\rightarrow Y$ to a Severi--Brauer variety $Y$ that's Brauer equivalent to $X$ and with $\mathrm{dim}(Y)=d-1$; geometrically, this morphism corresponds to the complete linear system defined by $j^*\mathcal{O}(1)$ on $C_{k^s}$. It follows from the Riemann-Roch theorem and \cite[Chapter 4, Corollary 3.2]{MR0463157} that $(C,\mu,f)$ defines a degree-$d$ Severi--Brauer diagram under the elliptic curve $E$, hence \ref{prop: ds} implies \ref{prop: dsbd}.

Let $P$ be a Severi--Brauer variety that's Brauer equivalent to $X$ with $\mathrm{dim}(P)$ minimal. As such, there is a closed immersion $P\rightarrow Y$ which realizes $P$ as a twisted linear subvariety of $Y$. We can view $P$ as the Severi--Brauer variety associated to a division $k$-algebra $D$, i.e.\ $P$ represents the functor whose $R$-points, for a finitely generated $k$-algebra $R$, are given as (cf.\ \cite[Definition 10.16]{MR1758562}) \[\mathrm{SB}(D)(R)=\{N\subset D_R :  N \mbox{ is an } R\mbox{-flat, right } D_R\mbox{-submodule of } D_R \mbox{ with } \mathrm{rk}_R(N)=\mathrm{deg}(D)\}.\] We can view $Y$ similarly as the Severi--Brauer variety associated to a matrix ring $M_n(D)$, for some $n\geq 1$, so that $Y$ represents the functor with $R$-points \[\mathrm{SB}(M_n(D))(R)=\{N\subset D_R^{\oplus n} : N \mbox{ is an } R\mbox{-flat, right } D_R\mbox{-submodule of } D_R^{\oplus n} \mbox{ with } \mathrm{rk}_R(N)=\mathrm{deg}(D)\}.\] With the functor notation, a closed immersion $P\rightarrow Y$ corresponds to an inclusion $i:D\subset D^{\oplus n}$ of right $D$-modules. Associated to the inclusion $i$ is also a quotient, say $q:D^{\oplus n}\rightarrow D^{\oplus n}/i(D)$.

For any finitely generated $k$-algebra $R$, and for any $R$-flat right $D_R$-submodule $N\subset D_R^{\oplus n}$ with rank $\mathrm{rk}_R(N)=\mathrm{deg}(D)$ that's not contained in $i(D)_R$, the image $q_R(N)$ inside $D_R^{\oplus n}/i(D)_R$ is again an $R$-flat right $D_R$-submodule with rank $\mathrm{rk}_R(q_R(N))=\mathrm{deg}(D)$. This defines a projection map $Y\setminus P\rightarrow Y'$ to another Severi--Brauer variety $Y'$, nonempty whenever $Y\neq P$, which is also Brauer equivalent to $X$ and which has smaller dimension than $Y$ (in fact, $\mathrm{dim}(Y')=\mathrm{dim}(Y)-\mathrm{dim}(P)-1$ exactly).

Now we work in cases. Assume first that $\mathrm{dim}(P)>2$. If $Y=P$, so that $d=\mathrm{dim}(P)+1$, then $P$ includes into any Brauer-equivalent Severi--Brauer variety. Hence there exists a closed immersion $C\subset Y=P\subset X$. On the other hand if $Y\neq P$, then we proceed as follows. Geometrically, the map $Y\setminus P\rightarrow Y'$ is a linear projection and, therefore, for a generic choice of $P\subset Y$ with $C\cap P=\emptyset$, projection induces a closed immersion $C\subset Y\setminus P\rightarrow Y'$. Repeating this process we eventually find closed immersions $C\subset P\subset X$ as above.

Assume now that $\mathrm{dim}(P)\leq 2$. Regardless of $d$, we first consider the $k$-rational point on $\mathbf{Pic}_C^{5d}(k)$ gotten from the rational point in (\ref{eq:point}) above by applying the multiplication-by-$5$ map $$\mathbf{Pic}^d_C(k)\xrightarrow{\Delta} \prod_{i=1}^5\mathbf{Pic}^d_C(k) \rightarrow \mathbf{Pic}^{5d}_C(k).$$
Since $d\geq 2$, we have $5d\geq 10$. Correspondingly, we get a closed immersion $C\rightarrow Y'$ to a Severi--Brauer variety $Y'$ that's still Brauer equivalent to $X$ since $5$ is coprime to the period of $[X]$ in $\mathrm{Br}(k)$. If $\mathrm{dim}(X)\geq \mathrm{dim}(Y')$, then we are done as there is a closed immersion $Y'\subset X$. Otherwise, by choosing suitable projections we can arrange that there is a closed immersion $C\rightarrow X$ since by assumption we have $\mathrm{dim}(X)>2$. This shows that \ref{prop: dsbd} implies \ref{prop: dci}. The rest is clear.
\end{proof}

The assumption that $\mathrm{dim}(X)\geq 3$ in Proposition \ref{prop: splitandembedd} is optimal in the sense that, given a smooth and proper genus one curve $C$ splitting a Severi--Brauer variety $X$ with $\mathrm{dim}(X)\leq 2$, there may not exist any closed immersion from $C$ to $X$.

If $\mathrm{dim}(X)=1$, then this is obvious. On the other hand, if $\mathrm{dim}(X)=2$ then, by following the above proof, one can show that there always exists a finite map $\varphi:C\rightarrow X$ which induces a birational morphism between $C$ and its image $\varphi(C)$. However, there still may not exist any closed immersion from $C$ to $X$.

\begin{remark}\label{rmk:nonellnorm}
For an explicit example of this latter phenomenon, one can consider any curve $C$ such as those constructed in \cite[Section 3.2]{MR2983078} having period $\mathrm{per}(C)=9$, index $\mathrm{ind}(C)=27$, and which split a Severi--Brauer surface $X$ of index $\mathrm{ind}(X)=3$. If a closed immersion $\varphi:C\rightarrow X$ did exist then, since $\mathrm{dim}(X)=2$, the degree of $\varphi(C)$ must be $\mathrm{deg}(\varphi(C))=3$. Intersecting $\varphi(C)$ with any other curve $\varphi(C)\neq D\subset X$ of degree $\mathrm{deg}(D)=3$ would then produce a Weil divisor on $C$ of degree $9$, which cannot exist when $\mathrm{ind}(C)=27$.

More generally, for any composite integer $n\geq 8$, for any curve $C$ having genus one and with $\mathrm{per}(C)=n$ and $\mathrm{ind}(C)=n^2$, and for any divisor $3<m<n$ of $n$, there exists a Severi--Brauer variety $Y_m$ of dimension $m-1<n-1$ such that $C$ admits a closed immersion to $Y_m$ realizing $C$ as a degree $n(n/m)$ curve in $Y_m$ and such that $C$ does not admit any closed immersion to $Y_m$ as a curve of smaller degree (in particular, there are no Severi--Brauer diagrams involving both $C$ and $Y_m$ even though $C$ does split $Y_m$).

Indeed, since $C$ has $\mathrm{per}(C)=n$, there exists a Severi--Brauer variety $X$ of dimension $n-1$ and a closed immersion from $C$ to $X$ realizing $C$ as a curve of degree $n$ in $X$. Since $\mathrm{ind}(C)=n^2$, the exponent of the central simple algebra $A$ such that $X\cong \mathrm{SB}(A)$ must also be $n$ since, if otherwise, the curve $C$ would contain a Weil divisor of degree strictly smaller than $n^2$. Hence $\mathrm{ind}(A)=n$ as well. Now let $Y_m$ be the Severi--Brauer variety corresponding to the division algebra underlying $A^{\otimes (n/m)}$. One can construct a closed immersion from $C$ to $Y_m$ of degree $n(n/m)$ by composing the twisted Segre embedding \[C\xrightarrow{\Delta} C^{\times (n/m)} \rightarrow X^{\times (n/m)} \rightarrow \mathrm{SB}(A^{\otimes (n/m)})\] with a suitable projection to $Y_m$. Since the exponent of $A^{\otimes (n/m)}$ is exactly $m$ in this case, this embedding from $C$ to $Y_m$ has the minimal degree allowable by the index of $C$.
\end{remark}

\begin{remark} Given a smooth, proper, geometrically connected curve $C$ having genus $g(C)=1$, the collection of Brauer classes split by $C$ form a subgroup of $\mathrm{Br}(k)$ called \textit{the relative Brauer group of $C$ over $k$,} \[\mathrm{Br}(C/k):=\mathrm{ker}\left(\mathrm{Br}(k)\xrightarrow{res_k^{k(C)}} \mathrm{Br}(k(C))\right).\]
If $C(k)\neq\emptyset$, then $\mathrm{Br}(C/k)=0$ since a morphism $C\rightarrow X$ to a Severi--Brauer variety $X$ necessarily induces a $k$-rational point of $X$, implying that $X$ is split itself. Still, even when $C(k)=\emptyset$ so that $C$ is a nontrivial $E=\mathbf{Pic}_C^0$-torsor, such curves $C$ split relatively few Brauer classes. As evidence for this claim, see \cite[Proposition 4.11]{MR3009747} where it's shown that the group $\mathrm{Br}(C/k)$ is finite whenever $k$ is finitely generated over $\mathbb{Q}$. There are even examples for which $\mathrm{Br}(C/k)=0$ even though $C(k)=\emptyset$. Indeed, by the Brauer-Albert-Hasse-Noether theorem, any curve $C$ which defines a nontrivial element of the Tate-Shafarevich group $\Sh(E/k)$ of an elliptic curve $E$ will have $\mathrm{Br}(C/k)=0$ and such examples are classical \cite{MR0041871}.

For more examples and computations: see \cite[Theorem 4.12]{MR3009747} for an example where the relative Brauer group is infinite, see \cite{MR0201435} for the case of a local base field, and see \cite{MR1995537,MR2922613, MR3503968, MR3414415} for examples with curves $C$ of small period.\end{remark}

\subsection{Theta groups and obstruction maps}
In this subsection we first briefly recall the construction of O'Neil's obstruction map, introduced in \cite{MR1924106}, using Mumford's theta groups \cite{MR0204427}. Then, using the twisting language, we show how one can associate to any continuous $1$-cocycle $\xi\in Z^1(G_k,E[n](k^s))$ both a $\xi$-twisted theta group and a $\xi$-twisted obstruction map. We end by relating these objects to their untwisted counterparts.

For any line bundle $\mathcal{L}$ on $E$ of degree $n=\mathrm{deg}(\mathcal{L})\geq 2$, there is an associated theta group $\Theta_{\mathcal{L}}$ which fits into a central extension of group $k$-schemes \begin{equation}\label{eq:centext} 1\to\mathbb{G}_{m}\xrightarrow{\rho}\Theta_{\mathcal{L}}\xrightarrow{\pi} E[n]\to 1. \end{equation} By construction, for any $k$-scheme $S$, the $S$-points of $\Theta_{\mathcal{L}}$ are in natural bijection with the set of pairs $(x,\phi)$ consisting of an $S$-point $x\in E(S)$ and an $S$-isomorphism of geometric vector bundles $\phi:\mathbb{V}(\mathcal{L}^\vee_S)\rightarrow \mathbb{V}(\mathcal{L}^\vee_S)$ which makes the following diagram commutative \[\begin{tikzcd} \mathbb{V}(\mathcal{L}^\vee_S)\arrow["\phi"]{r}\arrow{d} & \mathbb{V}(\mathcal{L}^\vee_S)\arrow{d} \\ E_S\arrow["T_x"]{r} & E_S. \end{tikzcd}\] Here we write $T_x=(p_1,m\circ (x\times\operatorname{id}_{E})):S\times_k E\rightarrow S\times_k E$ for the left translation-by-$x$ map on $E_S:=S\times_k E$, where we write $p_1$ and $p_2$ for the first and second projections from $E_S=S\times_k E$, we write $\mathcal{L}_S$ to mean $p_2^*\mathcal{L}$, and we note that $\phi$ should be $S$-linear on the fibers over any $S$-point $y\in E(S)$.

The group law on $\Theta_{\mathcal{L}}(S)$ is given as composition of diagrams. If $(x,\phi_1)$ and $(y,\phi_2)$ are elements of $\Theta_{\mathcal{L}}(S)$, then we have \[(x,\phi_1)\cdot (y,\phi_2)=(x+y,\phi_1\circ \phi_2).\] The map $\rho$ realizes $\mathbb{G}_m$ as the group of automorphisms of the line bundle $\mathbb{V}(\mathcal{L}^\vee)$, i.e.\ for any $\phi\in \mathbb{G}_m(S)$ we have $\rho(\phi)=(p_S,\phi)$ where $p\in E(k)$ is the identity of $E$. The map $\pi$ in (\ref{eq:centext}) is projection onto the first component, noting an element $(x,\phi)\in \Theta_{\mathcal{L}}(S)$ can exist only if $x\in E[n](S)$. Indeed, if $\phi$ is as above then $\phi$ induces a canonical isomorphism \begin{equation} \label{eq:veciso}\mathbb{V}(\mathcal{L}^\vee_S)\rightarrow E_S\times_{(T_x,E_s)}\mathbb{V}(\mathcal{L}^\vee_S)\cong \mathbb{V}(T_x^*\mathcal{L}^\vee_S)\end{equation} and there exists an isomorphism $\mathcal{L}_S\cong T_x^*\mathcal{L}_S$ only if $x\in E[n](S)$.

A representation of $\Theta_\mathcal{L}$ on the affine $k$-space $\mathrm{H}^0(E,\mathcal{L})$ can be defined by the rule \begin{equation}\label{eq:repmod}\Theta_{\mathcal{L}}\times\mathrm{H}^0(E,\mathcal{L})\rightarrow \mathrm{H}^0(E,\mathcal{L}) \quad \mbox{with}\quad  ((x,\phi), s)\mapsto \phi\circ s\circ T_{-x}\end{equation} where $(x,\phi)\in \Theta_{\mathcal{L}}(S)$ and $s:E_S\rightarrow \mathbb{V}(\mathcal{L}_S^\vee)$ is a section. Equivalently, the corresponding group $k$-scheme homomorphism $\Theta_{\mathcal{L}}\rightarrow \mathbf{GL}(\mathrm{H}^0(E,\mathcal{L}))$ sends $(x,\phi)\in \Theta_{\mathcal{L}}(S)$ to the $S$-point of $\mathbf{GL}(\mathrm{H}^0(E,\mathcal{L}))$ functorially associated to the composition of $\mathcal{O}_S(S)$-modules \begin{equation}\label{eq:rep}\mathrm{H}^0(E_S,\mathcal{L}_S)\xrightarrow{\phi'} \mathrm{H}^0(E_S,T_x^*\mathcal{L}_S)\rightarrow \mathrm{H}^0(E_S,T_{-x,*}\mathcal{L}_S)=\mathrm{H}^0(E_S,\mathcal{L}_S).\end{equation} The leftmost map $\phi'$ of (\ref{eq:rep}) is the map induced by the isomorphism
$\mathcal{L}_S\rightarrow T_x^*\mathcal{L}_S$ from (\ref{eq:veciso}) and the second map of (\ref{eq:rep}) is gotten from adjunction $T_x^*\mathcal{L}_S\rightarrow T_{-x,*}\mathcal{L}_S$ of the canonical isomorphism $T_{-x}^*T_x^*\mathcal{L}_S\cong \mathcal{L}_S$.

With the description of (\ref{eq:repmod}), it's clear that $\mathbb{G}_m\subset \Theta_{\mathcal{L}}$ acts on $\mathrm{H}^0(E,\mathcal{L})$ under the above representation via the canonical scaling action. Hence there is a commutative diagram of group $k$-schemes with exact rows \begin{equation}\label{eq:projrep}\begin{tikzcd}0\arrow{r} & \mathbb{G}_{m}\arrow{r}\arrow[equals]{d} & \Theta_{\mathcal{L}}\arrow{r}\arrow[d] & E[n]\arrow{r}\arrow["\Psi_{\mathcal{L}}"]{d} & 0 \\
      0\arrow{r} & \mathbb{G}_{m}\arrow{r} & \mathbf{GL}(\mathrm{H}^0(E,\mathcal{L}))\arrow{r} & \mathbf{PGL}(\mathrm{H}^0(E,\mathcal{L}))\arrow{r} & 0
   \end{tikzcd}\end{equation}
After choosing a basis for $\mathrm{H}^0(E,\mathcal{L})$, and hence also a morphism $\varphi:E\rightarrow \mathbb{P}(\mathrm{H}^0(E,\mathcal{L}))\cong\mathbb{P}^{n-1}$ representing the complete linear system of $\mathcal{L}$, the rightmost vertical arrow in (\ref{eq:projrep}) has the interpretation that an $S$-point $x\in E[n](S)$ is sent to the unique $S$-automorphism $\tau_x$ of $\mathbb{P}^{n-1}_S$ which makes the diagram \[\begin{tikzcd} E_S\arrow["\varphi_S"]{r}\arrow["T_x"]{d} & \mathbb{P}^{n-1}_S\arrow["\tau_x"]{d}\\
 E_S\arrow["\varphi_S"]{r} & \mathbb{P}^{n-1}_S\end{tikzcd}\] commutative, cf.\ \cite[Proposition 2.1]{MR1924106}. Moreover, by \cite[Theorem 2.5]{MR1825265}, the commutator pairing of the central extension (\ref{eq:centext}), \begin{equation} \label{eq:comm} e^{\mathcal{L}}:E[n](S)\times E[n](S)\rightarrow \mathbb{G}_m(S) \quad\mbox{defined by} \quad e^{\mathcal{L}}(x,y)=[\tilde{x},\tilde{y}]:=\tilde{x}\tilde{y}\tilde{x}^{-1}\tilde{y}^{-1}\end{equation} where $\tilde{x},\tilde{y}\in \Theta_{\mathcal{L}}(S)$ are arbitrary lifts of $x,y\in E[n](S)$ respectively, coincides with the Weil pairing on $E[n]$ so long as $\mathrm{char}(k)\nmid n$.

\begin{definition}
    \label{def:obstructionMap}
If $\mathcal{L}$ is a line bundle of degree $n\geq 2$ on $E$, then we define the \textit{obstruction map corresponding to $\mathcal{L}$} to be the connecting map in Galois cohomology \[
    \Delta_{\mathcal{L}}:\mathrm{H}^{1}(G_{k},E[n](k^s))\to \mathrm{H}^{2}(G_{k},(k^s)^\times)
  \] associated to the central extension (\ref{eq:centext}) of $\Theta_{\mathcal{L}}$.
\end{definition}

Among all possible obstruction maps coming from line bundles $\mathcal{L}$ on $E$, we specify a preferred one. Namely, if $\mathcal{L}=\mathcal{O}(p)^{\otimes n}$. In this case we write $\Theta_n$ for $\Theta_{\mathcal{L}}$, $\Delta_n$ for $\Delta_{\mathcal{L}}$, and we call $\Delta_n$ \textit{the degree-$n$ obstruction map}. The map $\Delta_n$ has the following interpretation:

\begin{proposition}[{\cite[Section 2]{MR2384334}}] \label{prop:obsforget}
Write $\pi_0(\mathsf{SB}_n)(k)$ for the set of isomorphism classes of Severi--Brauer varieties of dimension $n-1$ over $k$ and pointed by the isomorphism class $[\mathbb{P}^{n-1}_k]$. Then there is a commuting square \[\begin{tikzcd}\pi_0(E-\mathsf{SB}_n)(k)\arrow[equals]{r}\arrow{d} & \mathrm{H}^1(G_k,E[n](k^s))\arrow["\Psi_*"]{d} \\ \pi_0(\mathsf{SB}_n)(k)\arrow[equals]{r} & \mathrm{H}^1(G_k,\mathbf{PGL}_n(k^s))\end{tikzcd}\] where the left vertical arrow sends the class of a degree-$n$ Severi--Brauer diagram $[(C,\,\mu,\,f:C\rightarrow \mathrm{SB}(A))]$ to $[\mathrm{SB}(A)]$, the right vertical $\Psi_*$ is the pushforward along the map of (\ref{eq:projrep}) when $\mathcal{L}=\mathcal{O}(p)^{\otimes n}$, and the bottom horizontal map is constructed by considering Severi--Brauer varieties of dimension $n-1$ as twists of $\mathbb{P}^{n-1}_k$.

Composing with the connecting map of the bottom row of (\ref{eq:projrep}) then gives a commuting square
\[\begin{tikzcd}\pi_0(E-\mathsf{SB}_n)(k)\arrow[equals]{r}\arrow{d} & \mathrm{H}^1(G_k,E[n](k^s))\arrow["\Delta_n"]{d} \\ \mathrm{Br}(k)\arrow["\cong"]{r} & \mathrm{H}^2(G_k,(k^s)^\times)\end{tikzcd}\] where the left vertical arrow is similarly given by sending $[(C,\,\mu,\,f:C\rightarrow \mathrm{SB}(A))]$ to $[\mathrm{SB}(A)]\in \mathrm{Br}(k)$.$\hfill\square$
\end{proposition}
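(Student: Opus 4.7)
The plan is to reduce the proposition to the functoriality of connecting maps, once we have verified the first (top) square. The second (composite) square then follows for free by the naturality of the connecting homomorphisms associated to the rows of diagram (\ref{eq:projrep}), combined with the standard fact that the bottom connecting map $\mathrm{H}^1(G_k,\mathbf{PGL}_n(k^s))\to \mathrm{H}^2(G_k,(k^s)^\times)$ agrees under the Brauer-theoretic identifications with the map sending the twist class of a Severi--Brauer variety $Y$ of dimension $n-1$ to $[Y]\in\mathrm{Br}(k)$.

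To establish the top square I would start with an equivalence class $[\xi']=[(C',\,\mu',\,f':C'\to X')]\in\pi_0(E-\mathsf{SB}_n)(k)$ and unwind the definition of $S$ from the proof of Proposition \ref{prop:severiBrauerDiagramsBijections}. Explicitly, choose a splitting of $\xi'$ over $k^s$, i.e.\ $k^s$-isomorphisms $\phi:C'_{k^s}\to E_{k^s}$ (respecting the principal homogeneous structures) and $\psi:X'_{k^s}\to \mathbb{P}^{n-1}_{k^s}$ such that $\psi\circ f'_{k^s}=\iota_{k^s}\circ\phi$, where $\iota$ is the embedding attached to $\mathcal{L}=\mathcal{O}(p)^{\otimes n}$. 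Then $S([\xi'])$ is represented by the $1$-cocycle $\alpha_\sigma=\phi\circ{}^{\sigma}\phi^{-1}\in E(k^s)$, which actually lands in $E[n](k^s)$ because $\alpha_\sigma$ fixes $\phi^*(f'^*\mathcal{O}(1))$.

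The key computation is to show that the projective automorphism $\beta_\sigma:=\psi\circ{}^{\sigma}\psi^{-1}\in\mathrm{Aut}(\mathbb{P}^{n-1}_{k^s})=\mathbf{PGL}_n(k^s)$, which by construction of $\psi$ represents the class $[X']\in\mathrm{H}^1(G_k,\mathbf{PGL}_n(k^s))$, is precisely $\Psi_{\mathcal{L}}(\alpha_\sigma)$. This is where one uses the definition of the map $\Psi_{\mathcal{L}}$ recalled above: after choosing a basis for $\mathrm{H}^0(E,\mathcal{L})$, $\Psi_{\mathcal{L}}(x)$ is the unique projective automorphism $\tau_x$ of $\mathbb{P}^{n-1}_{k^s}$ satisfying $\tau_x\circ\iota_{k^s}=\iota_{k^s}\circ T_x$ on $E_{k^s}$. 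Chasing the equality $\psi\circ f'_{k^s}=\iota_{k^s}\circ\phi$ through $\sigma$ and using that $\phi\circ{}^{\sigma}\phi^{-1}$ is translation-by-$\alpha_\sigma$ on $E_{k^s}$ gives $\beta_\sigma\circ\iota_{k^s}=\iota_{k^s}\circ T_{\alpha_\sigma}$, hence $\beta_\sigma=\tau_{\alpha_\sigma}=\Psi_{\mathcal{L}}(\alpha_\sigma)$. This yields the equality of cocycles $[\beta]=\Psi_*[\alpha]$ in $\mathrm{H}^1(G_k,\mathbf{PGL}_n(k^s))$, which is exactly the commutativity of the top square.

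The main obstacle is the bookkeeping in the verification that $\beta_\sigma=\Psi_{\mathcal{L}}(\alpha_\sigma)$: one must be careful that the two splittings $\phi$ and $\psi$ can be chosen compatibly (i.e.\ with $\psi\circ f'_{k^s}=\iota_{k^s}\circ \phi$), which uses the fact that the line bundle $f'^*\mathcal{O}(1)$ pulls back along $\phi$ to (a line bundle isomorphic to) $\mathcal{O}(p)^{\otimes n}$. This is precisely the content of the identification of degree-$n$ line bundles on $E_{k^s}$ used in the proof of Proposition \ref{prop:severiBrauerDiagramsBijections}. Once this is set up, both squares become essentially a direct comparison of cocycles, and the second square follows from the first by applying the connecting map to (\ref{eq:projrep}) and invoking the standard description of the Brauer class of a Severi--Brauer variety as the image of its twist class under the connecting map of $1\to\mathbb{G}_m\to\mathbf{GL}_n\to\mathbf{PGL}_n\to 1$.
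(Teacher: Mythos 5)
Your proof is correct and is essentially the expected argument; the paper itself defers the proof to \cite{MR2384334} (recording only $\hfill\square$), so you have reconstructed the argument the authors have in mind. Your cocycle-level unwinding — choosing compatible $k^{s}$-splittings $\phi,\psi$ with $\psi\circ f'_{k^s}=\iota_{k^s}\circ\phi$, deducing $\beta_\sigma\circ\iota_{k^s}=\iota_{k^s}\circ T_{\alpha_\sigma}$ from Galois-equivariance of $f'$ and $\iota$, and then invoking the defining property of $\Psi_{\mathcal{L}}$ from (\ref{eq:projrep}) to conclude $\beta_\sigma=\Psi_{\mathcal{L}}(\alpha_\sigma)$ — is sound, as is the observation that compatibility of $\phi$ and $\psi$ is exactly what was established in the proof of Proposition \ref{prop:severiBrauerDiagramsBijections} (namely $\phi^*(\mathcal{O}(p)^{\otimes n})\cong (f')^*\mathcal{O}(1)$), and the second square is indeed just naturality of the connecting maps for (\ref{eq:projrep}) together with the standard identification of the twist class of a Severi--Brauer variety with its Brauer class.
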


\begin{corollary}
Let $k$ be a perfect field, let $E/k$ be an elliptic curve, and let $C$ be a curve with $\mathbf{Pic}^0_C\cong E$. For any integer $n\geq 2$, let $S_n(C)\subset \mathrm{H}^1(G_k,E[n](k^s))$ denote the set of elements corresponding to degree-$n$ Severi-Brauer diagrams under $E$ which include the curve $C$ in their definition.
Then $\mathrm{Br}(C/k)=\bigcup_{n\geq 2} \Delta_n(S_n(C))$.
\end{corollary}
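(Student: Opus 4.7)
The plan is to verify the two set-theoretic containments separately, using Propositions \ref{prop:severiBrauerDiagramsBijections}, \ref{prop: splitandembedd}, and \ref{prop:obsforget} as black boxes. The assumption that $k$ is perfect ensures that condition \ref{item: f2} is satisfied, so the bijection between $\mathrm{H}^{1}(G_{k},E[n](k^{s}))$ and equivalence classes of degree-$n$ Severi--Brauer diagrams under $E$ is available uniformly for every $n\geq 2$.

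For the containment $\bigcup_{n\geq 2}\Delta_n(S_n(C))\subseteq \mathrm{Br}(C/k)$, I would fix $n\geq 2$ and $\xi\in S_n(C)$. By the definition of $S_n(C)$ and the bijection $T$ of Proposition \ref{prop:severiBrauerDiagramsBijections}, the element $\xi$ corresponds to a degree-$n$ Severi--Brauer diagram of the form $(C,\mu,f\colon C\to X)$, and Proposition \ref{prop:obsforget} identifies $\Delta_n(\xi)$ with $[X]\in\mathrm{Br}(k)$. The existence of the morphism $f$ provides a $k(C)$-rational point of $X_{k(C)}$, forcing $X_{k(C)}\cong \mathbb{P}^{n-1}_{k(C)}$, and hence $[X]$ lies in the kernel of the restriction $\mathrm{Br}(k)\to\mathrm{Br}(k(C))$; that is, $[X]\in\mathrm{Br}(C/k)$.

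For the reverse containment $\mathrm{Br}(C/k)\subseteq \bigcup_{n\geq 2}\Delta_n(S_n(C))$, given $\alpha\in \mathrm{Br}(C/k)$, I would choose a central simple algebra $A$ with $[A]=\alpha$ and, replacing $A$ by a matrix algebra $M_{N}(A)$ for $N$ sufficiently large, arrange that $X:=\mathrm{SB}(A)$ has $\mathrm{dim}(X)\geq 3$. Since $\alpha\in\mathrm{Br}(C/k)$, the curve $C$ splits $X$, so the implication \ref{prop: ds} $\Rightarrow$ \ref{prop: dsbd} of Proposition \ref{prop: splitandembedd} yields an integer $d\geq 2$, a Severi--Brauer variety $Y$ with $[Y]=[X]=\alpha$, and a morphism $f\colon C\to Y$ making $(C,\mu,f)$ into a degree-$d$ Severi--Brauer diagram under $E$. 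The corresponding class $\xi\in\mathrm{H}^{1}(G_{k},E[d](k^{s}))$ then belongs to $S_d(C)$ by construction, and Proposition \ref{prop:obsforget} gives $\Delta_{d}(\xi)=[Y]=\alpha$.

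There is no substantial obstacle in this argument: the corollary is assembled directly from previously established results. The only points requiring attention are remembering to enlarge $\mathrm{dim}(X)$ to at least $3$ before invoking Proposition \ref{prop: splitandembedd}, and using perfectness of $k$ to access Proposition \ref{prop:severiBrauerDiagramsBijections} for every $n\geq 2$ (rather than only for $n$ coprime to $\mathrm{char}(k)$).
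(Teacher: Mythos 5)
Your proof is correct and follows essentially the same route as the paper's: the forward containment is immediate from Proposition \ref{prop:obsforget} and the existence of a morphism $C\to X$, and the reverse containment is extracted from the implication \ref{prop: ds}$\Rightarrow$\ref{prop: dsbd} of Proposition \ref{prop: splitandembedd}. The only cosmetic difference is that you enlarge $\mathrm{dim}(X)$ to at least $3$ so as to invoke that proposition as a black box, whereas the paper cites the relevant portion of its proof directly (which, as written there, does not require the dimension hypothesis); both readings are valid.
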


\begin{proof}
Clearly any Brauer class contained in the union $\bigcup_{n\geq 2} \Delta_n(S_n(C))$ is split by $C$ and, hence, is contained in $\mathrm{Br}(C/k)$. Conversely, if $C$ splits a Brauer class $\alpha\in \mathrm{Br}(k)$ then by the proof of Proposition \ref{prop: splitandembedd} there is a Severi--Brauer variety $Y$ of dimension $d-1$ (for some $d$) and a Severi--Brauer diagram $(C,\mu, f:C\rightarrow Y)$ of degree $d$ under $E$ giving a class in $\xi\in \mathrm{H}^1(G_k,E[d](k^s))$ with $\alpha=\Delta_d(\xi)$ by Proposition \ref{prop:severiBrauerDiagramsBijections}.
\end{proof}

\begin{remark}\label{rmk:quadratic}
The obstruction maps $\Delta_{\mathcal{L}}$, having been defined via connecting maps in nonabelian group cohomology, are only pointed maps and not group homomorphisms.
In fact, if $\mathcal{L}$ is symmetric in the sense of Mumford \cite[Section 2]{MR0204427}, i.e.\ $\mathcal{L}\cong i^*\mathcal{L}$ under the involution $i:E\rightarrow E$ defined by $i(x)=-x$, then the map $\Delta_{\mathcal{L}}$ has the property that for any integer $m\in \mathbb{Z}$ and for any $a\in \mathrm{H}^1(G_k,E[n](k^s))$ one has $\Delta_{\mathcal{L}}(ma)=m^2\Delta_{\mathcal{L}}(a)$ (see \cite[\S2]{Zarkhin} or use the diagram $(*)_n$ in \cite[p.\ 308]{MR0204427}). 

Moreover, Zarkhin shows in \cite[\S1, Theorem]{Zarkhin} that the map $\Delta_{\mathcal{L}}$ is related to the commutator pairing $e^{\mathcal{L}}$ from (\ref{eq:comm}) in the following way. The composition \[ \mathrm{H}^1(G_k,E[n](k^s))\times \mathrm{H}^1(G_k,E[n](k^s))\xrightarrow{\cup} \mathrm{H}^2(G_k,E[n](k^s)\otimes E[n](k^s))\xrightarrow{e^{\mathcal{L}}_*} \mathrm{H}^2(G_k, (k^s)^\times),\] of the cup product and induced pairing map, is symmetric and bilinear. The obstruction map $\Delta_{\mathcal{L}}$ relates to this composition by the formula \begin{equation} \label{eq:bilinear} e^{\mathcal{L}}_*(a\cup b)=\Delta_{\mathcal{L}}(a+b)-\Delta_{\mathcal{L}}(a)-\Delta_{\mathcal{L}}(b).\end{equation} Combined with the quadratic property above, this gives the relation $e^{\mathcal{L}}_*(a\cup a)=2\Delta_{\mathcal{L}}(a)$. In the case that $n\in \mathbb{Z}$ is odd, this also gives $\Delta_{\mathcal{L}}(a)=\frac{1}{2}e^{\mathcal{L}}_*(a\cup a)$. From now on, we will write $e^n_*$ when $\mathcal{L}=\mathcal{O}(p)^{\otimes n}$.
\end{remark}

For any given degree-$n$ line bundle $\mathcal{L}$ on $E$, the obstruction map $\Delta_{\mathcal{L}}$ for $\mathcal{L}$ can be determined from $\Delta_n$. More generally, if we consider any continuous $1$-cocycle $\xi\in Z^1(G_k,E[n](k^s))$ whose class in $\mathrm{H}^1(G_k,E[n](k^s))$ defines the class of a degree-$n$ Severi--Brauer diagram $T([\xi])=[(C,\mu,f:C\rightarrow \mathrm{SB}(A))]$ then, by twisting the diagram (\ref{eq:projrep}) with $\mathcal{L}=\mathcal{O}(p)^{\otimes n}$, one gets a commutative ladder of group $k$-schemes with exact rows \begin{equation}\label{eq:projreptwist}\begin{tikzcd}0\arrow{r} & \mathbb{G}_{m}\arrow{r}\arrow[equals]{d} & \Theta_{\xi}\arrow{r}\arrow[d] & E[n]\arrow{r}\arrow["\Psi_{\xi}"]{d} & 0 \\
      0\arrow{r} & \mathbb{G}_{m}\arrow{r} & \mathbf{GL}(A)\arrow{r} & \mathbf{PGL}(A)\arrow{r} & 0.
   \end{tikzcd}\end{equation}
To be precise, one works with the diagram (\ref{eq:projrep}) scalar extended to the separable closure $k^s$ of $k$ and defines a twisted action of $G_k$ on each group $k$-scheme by:
\begin{itemize}
\item $\sigma\in G_k$ acts on $E[n]_{k^s}$ via the conjugation $\xi_\sigma (\sigma(-))\xi_\sigma^{-1}$; since $E[n]$ is commutative, this action reduces to the usual action of $G_k$ on $E[n]_{k^s}$.
\item By Hilbert's theorem 90, for each $\sigma\in G_k$ one can choose a lift $\tilde{\xi}_{\sigma}\in \Theta_n(k^s)$ of $\xi_\sigma\in E[n](k^s)$; now $\sigma$ acts on $(\Theta_n)_{k^s}$ via the conjugation $\tilde{\xi}_{\sigma}(\sigma(-))\tilde{\xi}_{\sigma}^{-1}$. Since $\mathbb{G}_m$ is central in $\Theta_n$, this action is independent of the choice of lifts.
\item $\sigma\in G_k$ acts on $\mathbf{GL}_n$ and on $\mathbf{PGL}_n$ by conjugation as well, using the elements induced from $\xi_\sigma$ and $\tilde{\xi}_\sigma$ under the compositions $\Theta_n\rightarrow \mathbf{GL}_n$ and $E[n]\rightarrow \mathbf{PGL}_n$.
\end{itemize}
Galois descent then guarantees the existence of the diagram (\ref{eq:projreptwist}) defined over $k$.

\begin{definition} For any continuous $1$-cocycle $\xi\in Z^1(G_k,E[n](k^s))$, we call $\Theta_{\xi}$ the \textit{$\xi$-twisted Theta group associated to $E[n]$} and we write \[\Delta_\xi: \mathrm{H}^1(G_k,E[n](k^s))\rightarrow \mathrm{H}^2(G_k,(k^s)^\times)\] for the associated \textit{$\xi$-twisted obstruction map} gotten as the connecting map in Galois cohomology associated to the central extension of the top row in (\ref{eq:projreptwist}). 
\end{definition}

\begin{remark}
The group $k$-scheme $\Theta_\xi$ is determined, up to isomorphism, by the class $[\xi]\in \mathrm{H}^1(G_k,E[n](k^s))$. The map $\Delta_\xi$ also depends on just the class $[\xi]\in \mathrm{H}^1(G_k,E[n](k^s))$. However, the procedure to construct the twisted diagram (\ref{eq:projreptwist}) does depend on the choice of $1$-cocycle $\xi\in Z^1(G_k,E[n](k^s))$.
\end{remark}

\begin{remark}\label{rmk:linexi}
Let $\mathcal{L}$ be any fixed line bundle of degree $n$ on $E$. Let $\xi\in Z^1(G_k,E[n](k^s))$ be a continuous $1$-cocycle gotten from an explicit choice of splitting of the Severi--Brauer diagram $(E,m, \varphi:E\rightarrow \mathbb{P}^{n-1})$ where $\varphi$ is associated to the complete linear system of $\mathcal{L}$. Then it's shown in \cite{MR2384334}, in the exposition immediately after Proposition 1.31, that the $\xi$-twisted diagram (\ref{eq:projreptwist}) is, up to a change of basis for $\mathrm{H}^0(E,\mathcal{L})$, the same as the diagram (\ref{eq:projrep}) associated to the line bundle $\mathcal{L}$. In particular, $\Delta_\mathcal{L}=\Delta_\xi$.
\end{remark}

\begin{proposition}\label{prop:twistob}
Let $\xi\in Z^1(G_k,E[n](k^s))$ be a continuous $1$-cocycle. The following diagram commutes \[\begin{tikzcd}\mathrm{H}^1(G_k,E[n](k^s))\arrow["+{\left[\xi\right]}"]{r}\arrow["\Delta_\xi"]{d} & \mathrm{H}^1(G_k,E[n](k^s))\arrow["\Delta_n"]{d} \\ \mathrm{H}^2(G_k,(k^s)^\times)\arrow["+\Delta_n({[\xi]})"]{r} & \mathrm{H}^2(G_k,(k^s)^\times).\end{tikzcd}\] Hence $\Delta_\xi(a)=\Delta_n(a+[\xi])-\Delta_n([\xi])$ for all $a\in \mathrm{H}^1(G_k,E[n](k^s))$.
\end{proposition}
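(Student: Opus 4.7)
The plan is to verify the identity by an explicit $2$-cocycle computation. I would begin by choosing cocycle representatives $a,\xi\in Z^1(G_k, E[n](k^s))$ and lifting them to set-theoretic maps $\tilde a,\tilde\xi\colon G_k\to \Theta_n(k^s)$ (possible by Hilbert's Theorem 90 for $\mathbb{G}_m$, as already used in the setup of diagram (\ref{eq:projreptwist})). Because the projection $\Theta_n\to E[n]$ is a group homomorphism onto the abelian group $E[n]$, the pointwise product $\tilde a\cdot\tilde\xi$ is a lift of $a+\xi$. Using the standard formula for the connecting map of a central extension, together with the twisted Galois action $\sigma\ast g = \tilde\xi_\sigma\,\sigma(g)\,\tilde\xi_\sigma^{-1}$ on $\Theta_n(k^s)$ defining $\Theta_\xi$, one writes
\begin{align*}
\Delta_n([\xi])_{\sigma,\tau} &= \tilde\xi_\sigma\, \sigma(\tilde\xi_\tau)\, \tilde\xi_{\sigma\tau}^{-1},\\
\Delta_n(a+[\xi])_{\sigma,\tau} &= (\tilde a_\sigma\tilde\xi_\sigma)\, \sigma(\tilde a_\tau\tilde\xi_\tau)\, (\tilde a_{\sigma\tau}\tilde\xi_{\sigma\tau})^{-1},\\
\Delta_\xi(a)_{\sigma,\tau} &= \tilde a_\sigma\,\tilde\xi_\sigma\,\sigma(\tilde a_\tau)\,\tilde\xi_\sigma^{-1}\,\tilde a_{\sigma\tau}^{-1},
\end{align*}
each of which lies in the center $\mathbb{G}_m(k^s)\subset\Theta_n(k^s)$.

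Because all three values are central, the desired identity
\[
\Delta_n(a+[\xi])_{\sigma,\tau} \;=\; \Delta_\xi(a)_{\sigma,\tau}\cdot \Delta_n([\xi])_{\sigma,\tau}
\]
reduces, after cancelling the common left factor $\tilde a_\sigma\tilde\xi_\sigma\,\sigma(\tilde a_\tau)$, to
\[
\tilde\xi_\sigma^{-1}\,\tilde a_{\sigma\tau}^{-1}\,\tilde\xi_\sigma\,\sigma(\tilde\xi_\tau)\,\tilde\xi_{\sigma\tau}^{-1} \;=\; \sigma(\tilde\xi_\tau)\,\tilde\xi_{\sigma\tau}^{-1}\,\tilde a_{\sigma\tau}^{-1}
\]
in $\Theta_n(k^s)$. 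I would verify this by moving $\tilde a_{\sigma\tau}^{-1}$ to the right past, successively, $\tilde\xi_\sigma^{-1}$, $\sigma(\tilde\xi_\tau)$, and $\tilde\xi_{\sigma\tau}^{-1}$. Each move introduces a factor from the commutator pairing $e^n$ on $\Theta_n$; since $E[n]$ is abelian these commutators land in $\mathbb{G}_m$, and they are bilinear and antisymmetric in their two arguments (as holds for any commutator pairing on a central extension with abelian quotient, independently of whether one identifies $e^n$ with the Weil pairing, which requires $\mathrm{char}(k)\nmid n$). The accumulated central factor works out to
\[
e^n(\xi_\sigma,\,a_{\sigma\tau})\cdot e^n(a_{\sigma\tau},\,\sigma(\xi_\tau))^{-1}\cdot e^n(a_{\sigma\tau},\,\xi_{\sigma\tau}),
\]
and expanding $\xi_{\sigma\tau} = \xi_\sigma + \sigma(\xi_\tau)$ via the cocycle relation on $\xi$ and applying bilinearity kills the middle factor, leaving $e^n(\xi_\sigma, a_{\sigma\tau})\cdot e^n(a_{\sigma\tau},\xi_\sigma) = 1$ by antisymmetry.

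The main (and only) obstacle I anticipate is the careful bookkeeping of orders and signs while performing the commutator moves; no deeper input is required once one observes that the statement is essentially the assertion that the connecting map of a central extension behaves affinely under twisting by a $1$-cocycle. Passing from $2$-cocycles to cohomology classes then yields both the commutativity of the displayed square and the stated formula $\Delta_\xi(a) = \Delta_n(a+[\xi]) - \Delta_n([\xi])$.
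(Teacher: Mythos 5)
Your proof is correct. The computation checks out: after cancelling the common left factor $\tilde a_\sigma\tilde\xi_\sigma\sigma(\tilde a_\tau)$ and commuting $\tilde a_{\sigma\tau}^{-1}$ all the way to the right using the relation $gh = e^n(\bar g,\bar h)\,hg$ in $\Theta_n(k^s)$, the accumulated central factor is $e^n(a_{\sigma\tau},\xi_\sigma)^{-1}\cdot e^n(a_{\sigma\tau},\sigma(\xi_\tau))^{-1}\cdot e^n(a_{\sigma\tau},\xi_{\sigma\tau})$, which vanishes by bilinearity and the cocycle relation $\xi_{\sigma\tau}=\xi_\sigma+\sigma(\xi_\tau)$, exactly as you say.

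However, your route is genuinely different from the paper's. The paper disposes of Proposition~\ref{prop:twistob} in one line by citing the general formal twisting result \cite[Proposition~(28.12)]{MR1632779} (Knus--Merkurjev--Rost--Tignol), which says precisely that the connecting map in nonabelian cohomology behaves ``affinely'' under twisting by a $1$-cocycle valued in a group that is abelian modulo center. You instead re-derive this fact by hand for the central extension $1\to\mathbb{G}_m\to\Theta_n\to E[n]\to 1$, working at the level of $2$-cocycles and tracking the commutator corrections explicitly. What the citation buys is brevity and the ability to invoke a well-documented general principle; what your computation buys is self-containedness and transparency about exactly which structural facts are used---centrality of $\mathbb{G}_m$, the cocycle identity for $\xi$, and the bilinearity and antisymmetry of the commutator pairing $e^n$. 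You also correctly flag that the argument needs only the formal properties of the commutator pairing of a central extension with abelian quotient, not the identification of $e^n$ with the Weil pairing, so the statement holds without any restriction on $\mathrm{char}(k)$---consistent with the generality in which the paper states it. One small cosmetic point: the existence of the set-theoretic lifts $\tilde a,\tilde\xi$ is really just surjectivity of $\Theta_n(k^s)\to E[n](k^s)$ (triviality of $\mathbb{G}_m$-torsors over the separably closed field $k^s$); the paper's own invocation of ``Hilbert's theorem 90'' at this step is the same mild imprecision, so you are in good company.
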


\begin{proof}
This is a formal consequence of the twisting theory, see \cite[Proposition (28.12)]{MR1632779}.
\end{proof}

\begin{corollary}\label{cor: splittoo}
Let $E/k$ be an elliptic curve and let $\mathcal{L}$ be a degree-$n$ line bundle on $E$. If there exists an element $a\in \mathrm{H}^1(G_k,E[n](k^s))$ with $\Delta_{\mathcal{L}}(a)=\alpha\in \mathrm{Br}(k)$, then there exists a principal homogeneous space $C$ for $E$ which splits $\alpha$. Further, there is some $z\in \mathrm{H}^1(G_k,E[n](k^s))$ such that  $\alpha=\Delta_n(z)$. 
\end{corollary}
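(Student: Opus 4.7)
The plan is to reduce the obstruction map $\Delta_{\mathcal{L}}$ to the standard obstruction map $\Delta_n$ via the twisting formula of Proposition \ref{prop:twistob}, and then to identify an element in the image of $\Delta_n$ with the Brauer class of the target of a degree-$n$ Severi--Brauer diagram via Proposition \ref{prop:obsforget}.

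First I would apply Remark \ref{rmk:linexi} to produce a continuous $1$-cocycle $\xi\in Z^{1}(G_{k},E[n](k^{s}))$, obtained from an explicit splitting of the Severi--Brauer diagram $(E,m,\varphi_{\mathcal{L}}:E\to\mathbb{P}^{n-1})$ associated with the complete linear system of $\mathcal{L}$, for which $\Delta_{\mathcal{L}}=\Delta_{\xi}$. Proposition \ref{prop:twistob} then gives
\[
    \alpha\;=\;\Delta_{\mathcal{L}}(a)\;=\;\Delta_{\xi}(a)\;=\;\Delta_{n}(a+[\xi])-\Delta_{n}([\xi]).
\]

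The crux of the argument is the vanishing $\Delta_{n}([\xi])=0$. Under the bijection $T$ of Proposition \ref{prop:severiBrauerDiagramsBijections}, $[\xi]$ corresponds to the diagram $(E,m,\varphi_{\mathcal{L}})$, and by Proposition \ref{prop:obsforget} the class $\Delta_n([\xi])$ equals the Brauer class of the target Severi--Brauer variety of this diagram. But $\dim_{k}\mathrm{H}^{0}(E,\mathcal{L})=n$ by Riemann--Roch, so $\varphi_{\mathcal{L}}$ maps into the split $\mathbb{P}^{n-1}_{k}$ (equivalently, the identity $p\in E(k)$ yields a $k$-rational point $\varphi_{\mathcal{L}}(p)$ on the target). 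Hence $\Delta_{n}([\xi])=[\mathbb{P}^{n-1}_{k}]=0$ in $\mathrm{Br}(k)$, and setting $z:=a+[\xi]$ yields $\Delta_{n}(z)=\alpha$, which is the second assertion.

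For the first assertion I would feed $z$ through the bijection $T$ of Proposition \ref{prop:severiBrauerDiagramsBijections} to obtain a degree-$n$ Severi--Brauer diagram $(C,\mu,f:C\to X)$ under $E$; Proposition \ref{prop:obsforget} identifies $[X]=\Delta_{n}(z)=\alpha$, and the morphism $f$ exhibits $C$ as a splitting curve for $X$ via the induced $k(C)$-rational point of $X_{k(C)}$. I do not anticipate a serious obstacle: the substance is concentrated in the observation $\Delta_{n}([\xi])=0$, which is simply the fact that $E$ itself, being the trivial $E$-torsor, embeds into a split projective space through any of its complete linear systems.
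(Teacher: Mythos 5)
Your proposal is correct and follows essentially the same argument as the paper: identify $\Delta_{\mathcal{L}}$ with $\Delta_{\xi}$ via Remark \ref{rmk:linexi}, apply the twisting formula of Proposition \ref{prop:twistob}, observe $\Delta_n([\xi])=0$ (as the trivial diagram $(E,m,\varphi_{\mathcal{L}})$ has split target, by Proposition \ref{prop:obsforget}), and take $z=a+[\xi]$, then read off the splitting curve from Propositions \ref{prop:severiBrauerDiagramsBijections} and \ref{prop:obsforget}. The only difference is presentational: you spell out why $\Delta_n([\xi])=0$, whereas the paper simply cites Proposition \ref{prop:obsforget} for the same vanishing.
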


\begin{proof}
Let $\xi:G_k\rightarrow E[n](k^s)$ be any continuous $1$-cocycle gotten from the Severi--Brauer diagram $(E,m,\varphi)$ as in Remark \ref{rmk:linexi}. Then from Proposition \ref{prop:twistob} we have \[ \alpha=\Delta_{\mathcal{L}}(a)=\Delta_n(a+[\xi])-\Delta_n([\xi]).\] From Proposition \ref{prop:obsforget}, we have $\Delta_n([\xi])=0$. Hence $\Delta_n(a+[\xi])=\alpha$ and the claim follows by applying Proposition \ref{prop:obsforget} again.
\end{proof}

\begin{remark}
Another way to view Proposition \ref{prop:twistob} is as follows. Let $\xi=(C,\,\,\mu,\,\,f:C\rightarrow \mathrm{SB}(A))$ be a fixed degree-$n$ Severi--Brauer diagram for $E$. Let $\pi_0(E-\mathsf{SB}_n)(k)_\xi$ be the set of isomorphism classes of degree-$n$ Severi--Brauer diagrams over $k$ and under $E$, considered as twists of $\xi$, and let $\pi_0(\mathsf{SB}_n)(k)_\xi$ be the set of isomorphism classes of Severi--Brauer varieties over $k$ of dimension $n-1$, considered as twists of $\mathrm{SB}(A)$. Then, assuming either \ref{item: f1} or \ref{item: f2} of Proposition \ref{prop:severiBrauerDiagramsBijections}, there is a commuting diagram (see \cite[Functorality, p.\ 387]{MR1632779} for the front-most square):
	\[\begin{tikzcd}[row sep=scriptsize, column sep=scriptsize]
		& \pi_0(E-\mathsf{SB}_n)(k)_\xi \arrow[swap, "S_\xi", dl] \arrow[equals, rr] \arrow[dd] & & \pi_0(E-\mathsf{SB}_n)(k) \arrow["S", dl] \arrow[dd] \\
		\mathrm{H}^1(G_k,E[n](k^s)) \arrow["+{[\xi]}", near end, rr, crossing over] \arrow["\Psi_{\xi*}",dd] & & \mathrm{H}^1(G_k,E[n](k^s)) \\
		& \pi_0(\mathsf{SB}_n)(k)_\xi \arrow[dl] \arrow[equals, rr] & & \pi_0(\mathsf{SB}_n)(k) \arrow[dl] \\
		\mathrm{H}^1(G_k,\mathbf{PGL}(A)(k^s)) \arrow[rr] & & \mathrm{H}^1(G_k,\mathbf{PGL}_n(k^s)). \arrow["\Psi_*", near start, from=uu, crossing over]\\
	\end{tikzcd}\]
 The vertical arrows in this diagram can all be viewed as ``forgetful" maps. The horizontal arrows can be considered as taking an object, considered as a twist of $\xi$ (or of $\mathrm{SB}(A)$), and considering it instead as a twist of the trivial degree-$n$ Severi--Brauer diagram under $E$ (or as a twist of $\mathbb{P}^{n-1}$ respectively).
\end{remark}
  
\section{Compatibility with the Galois symbol}\label{sec:symbols}
We continue to work over a fixed but arbitrary base field $k$. In this section, we choose an integer $n\geq 2$ which is indivisible by the characteristic of the field $k$ and we assume that $E$ is an elliptic curve defined over $k$ admitting an isomorphism of group schemes $\phi_n: E[n]\cong \mathbb{Z}/n\mathbb{Z}\times \mu_n$. Such an isomorphism $\phi_n$ is called a \textit{full level-$n$ structure on $E[n]$}. 

Let $t\in E[n](k)$ be such that $\langle \phi(t)\rangle=\mathbb{Z}/n\mathbb{Z}\times 1$. Write $\mathcal{L}=\mathcal{O}(t+2t+\cdots + (n-1)t+nt)$ for the degree $n$ line bundle on $E$ gotten by summing all $n$ prime divisors corresponding to points in the cyclic subgroup of $E[n](k)$ generated by $t$. For simplicity of notation, we write $\Delta_t$ for the obstruction map of Definition \ref{def:obstructionMap} associated to the bundle $\mathcal{L}$. Note both $\mathcal{L}$ and $\Delta_t$ are independent of the choice of generator for $\langle t\rangle \subset E[n](k)$.

\begin{remark}\label{rmk:deltatandn}
We can relate the the obstruction map $\Delta_t$ to $\Delta_n$ using Proposition \ref{prop:twistob}. Using Remark \ref{rmk:Kummer}, the two obstruction maps are equal precisely when there exists an isomorphism $\mathcal{L}\cong \mathcal{O}(q)^{\otimes n}$ for some point $q\in E(k)$. If $n$ is odd, then one can check $\mathcal{L}\cong \mathcal{O}(p)^{\otimes n}$ where $p\in E(k)$ is the identity. If $n$ is even, then since \begin{align*}\mathcal{O}(t+2t+\cdots + nt)&\cong \mathcal{O}((t-p) + (2t- p) + \cdots + (nt-p))\otimes \mathcal{O}(p)^{\otimes n}\\ 
&\cong \mathcal{O}\left(\frac{n(n-1)}{2}\left(t-p\right)\right)\otimes \mathcal{O}(p)^{\otimes n}\\
&\cong \mathcal{O}\left(\frac{n}{2}t\right)\otimes \mathcal{O}(p)^{\otimes n-1}
\end{align*} we find that $\mathcal{L}\cong \mathcal{O}(q)^{\otimes n}$ for some point $q\in E(k)$ whenever there exists a point $q\in E(k)$ such that $\frac{n}{2}t=nq$. If there is a point $q\in E[2n](k)$ with $2q=t$, then this latter equality certainly holds. Hence, if $n$ is odd or if $n$ is even and there exists $q\in E[2n](k)$ with $2q=t$ then $\Delta_t=\Delta_n$.
\end{remark}

Antieau and Auel \cite[Proposition 2.13 and Proposition 2.14]{https://doi.org/10.48550/arxiv.2106.04291} have given an explicit description for the obstruction map $\Delta_{t}$ in terms of cyclic algebras; their proof uses a characterization of this functorial transformation as a Brauer class on the classifying stack of $E[n]$. However, in the case when $k$ contains a primitive $n$th root of unity, this description had been obtained earlier by Clark and Sharif \cite[Theorem 10 and Lemma 11]{MR2592017} using Galois cohomology. Here we generalize the proof of Clark and Sharif, still using Galois cohomology, in order to deduce the result of Antieau and Auel; by doing so, we're also able to simplify the description of \cite[Proposition 2.14]{https://doi.org/10.48550/arxiv.2106.04291} in the case that $E[2n]$ has full level-$2n$ structure.

\subsection{Galois symbols and cyclic algebras}\label{ss:cyclicalg}
For an arbitrary integer $n\geq 1$, for any Galois $(\mathbb{Z}/n\mathbb{Z})$-algebra $L$ over $k$, and for any unit $a\in k^\times$ one can construct the \textit{cyclic central simple $k$-algebra} $(L,a)$. By construction, the algebra $(L,a)$ is generated by $L$ and a single element $z$ with multiplication $z^n=a$ and $zb=\rho(b)z$ for any $b\in L$ and with $\rho=1\in \mathbb{Z}/n\mathbb{Z}$. When $k$ contains a primitive $n$th root of unity, we can identify $L\cong k(\sqrt[n]{c})^{\oplus d}$ for some $c\in k^\times$ with exact order $n/d$ in the group $k^\times/k^{\times n}$ for some divisor $d$ of $n$. There is then a generator $\sigma\in \mathrm{Gal}(k(\sqrt[n]{c})/k)$ with $\sigma(\sqrt[n]{c})=\zeta_{n/d}\sqrt[n]{c}$, for some primitive $(n/d)$th root of unity $\zeta_{n/d}$, such that the $\mathbb{Z}/n\mathbb{Z}$-algebra structure on $L$ is given by \[\rho(x_1,...,x_d)=(\sigma(x_d),x_1,...,x_{d-1})\] for $\rho=1\in \mathbb{Z}/n\mathbb{Z}$. In this case we'll write $(c,a)_{n}$ for the algebra $(L,a)$ and call it the \textit{degree-$n$ symbol algebra} associated to the pair $a,c\in k^\times$.

One can give an alternative, cohomological description of the cyclic central simple $k$-algebra $(L,a)$ when $n$ is indivisible by the characteristic of the base field $k$ as follows. First, one can identify $a$ with its class in $k^\times/k^{\times n}\cong \mathrm{H}^1(G_k,\mu_n(k^s))$ and $L$ with a character $\chi:G_k\rightarrow \mathbb{Z}/n\mathbb{Z}$ via \cite[(28.15) Example]{MR1632779}. Taking the cup product of the associated classes $\chi\in \mathrm{H}^1(G_k,\mathbb{Z}/n\mathbb{Z})$ and $[a]\in \mathrm{H}^1(G_k,\mu_n(k^s))$ yields a class in $\mathrm{H}^2(G_k,\mu_n(k^s))\subset \mathrm{H}^2(G_k,(k^s)^\times)$. If one identifies the latter group with $\mathrm{Br}(k)$ via the crossed-product construction, then there is an equality
\[\chi\cup [a]=[(L,a)]\in \mathrm{Br}(k).\]
We recommend \cite[\S30.A]{MR1632779} for proofs of these facts and more.

\begin{definition}
We define the \textit{degree-$n$ Galois symbol} associated to a character $\chi\in \mathrm{H}^1(G_k,\mathbb{Z}/n\mathbb{Z})$ and $a\in k^\times$ to be the cup product $\chi\cup [a]\in \mathrm{H}^2(G_k,(k^s)^\times)$. We'll write $[\chi,a)_n$ for this symbol.
\end{definition}

\begin{remark}
Identifying $\mathrm{H}^2(G_k,(k^s)^\times)$ with $\mathrm{Br}(k)$ via the crossed product construction is the negative of the inverse of the map used in Proposition \ref{prop:obsforget}.
\end{remark}

Galois symbols of varying degrees are often related in several ways. One such way that we will use later is the following:

\begin{lemma}\label{lem:algebra}
Let $m\geq 1$ be an integer indivisible by the characteristic of $k$. Let $\chi\in \mathrm{H}^1(G_k,\mathbb{Z}/n\mathbb{Z})$ correspond to the Galois $(\mathbb{Z}/n\mathbb{Z})$-algebra $L$ over $k$, and let $b\in k^\times$ be any element. Then there exists a natural Galois $(\mathbb{Z}/nm\mathbb{Z})$-algebra structure on $L^{\times m}=L\times \cdots \times L$ corresponding to the character $m\circ\chi\in \mathrm{H}^1(G_k,\mathbb{Z}/nm\mathbb{Z})$, where $m:\mathbb{Z}/n\mathbb{Z}\rightarrow \mathbb{Z}/nm\mathbb{Z}$ is the inclusion-by-$m$ map, and an equality \[[\chi,b)_n=[m\circ \chi,b)_{nm}\] inside $\mathrm{H}^2(G_k,(k^s)^\times)$.
\end{lemma}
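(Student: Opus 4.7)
The plan is to handle the two claims of the lemma separately: first, I will construct the natural $(\mathbb{Z}/nm\mathbb{Z})$-Galois algebra structure on $L^{\times m}$ whose classifying class is $m\circ\chi$; second, I will verify the equality $[\chi,b)_n=[m\circ\chi,b)_{nm}$ in $\mathrm{H}^2(G_k,(k^s)^\times)$ by a direct cocycle computation.

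For the first claim, I plan to use the standard correspondence between Galois $A$-algebras over $k$ and \'etale $A$-torsors over $\mathrm{Spec}(k)$, which for finite abelian $A$ with trivial $G_k$-action are classified by $\mathrm{H}^1(G_k,A)$. Under this correspondence, functoriality in coefficients sends the $\mathbb{Z}/n\mathbb{Z}$-torsor $T=\mathrm{Spec}(L)$ classified by $\chi$ to the induced $(\mathbb{Z}/nm\mathbb{Z})$-torsor $T\times^{\mathbb{Z}/n\mathbb{Z}}\mathbb{Z}/nm\mathbb{Z}$, which is classified by $m\circ\chi$. Since $m$ realizes $\mathbb{Z}/n\mathbb{Z}$ as the subgroup $m\mathbb{Z}/nm\mathbb{Z}\subseteq\mathbb{Z}/nm\mathbb{Z}$ of index $m$, this contracted product is a disjoint union of $m$ copies of $T$ as a $k$-scheme, and is represented by $L^{\times m}$. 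The resulting $(\mathbb{Z}/nm\mathbb{Z})$-Galois algebra structure is the one in which $m\mathbb{Z}/nm\mathbb{Z}\cong\mathbb{Z}/n\mathbb{Z}$ acts on each factor via the original $\mathbb{Z}/n\mathbb{Z}$-structure of $L$, and $1\in\mathbb{Z}/nm\mathbb{Z}$ cyclically permutes the $m$ factors with the wrap-around twisted by the generator $\rho$ of $\mathbb{Z}/n\mathbb{Z}$.

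For the second claim, I plan to fix once and for all a compatible pair of roots $\sqrt[nm]{b}\in k^s$ and $\sqrt[n]{b}:=(\sqrt[nm]{b})^m\in k^s$. Kummer theory then produces explicit $1$-cocycle representatives
\[[b]_n\colon\sigma\mapsto\sigma(\sqrt[n]{b})/\sqrt[n]{b}\in\mu_n,\qquad [b]_{nm}\colon\sigma\mapsto\sigma(\sqrt[nm]{b})/\sqrt[nm]{b}\in\mu_{nm},\]
satisfying the identity $[b]_n(\sigma)=[b]_{nm}(\sigma)^m$ under the inclusion $\mu_n\hookrightarrow\mu_{nm}$. Computing both cup products against the pairings $(c,\zeta)\mapsto\zeta^c$ yields $2$-cocycle representatives
\[[\chi,b)_n\colon(\sigma,\tau)\mapsto [b]_n(\tau)^{\chi(\sigma)},\qquad [m\circ\chi,b)_{nm}\colon(\sigma,\tau)\mapsto [b]_{nm}(\tau)^{m\chi(\sigma)}.\]
Pushing the first cocycle forward along $\mu_n\hookrightarrow\mu_{nm}\hookrightarrow (k^s)^\times$ and substituting the identity above rewrites it as $(\sigma,\tau)\mapsto ([b]_{nm}(\tau)^m)^{\chi(\sigma)}=[b]_{nm}(\tau)^{m\chi(\sigma)}$, which is literally the cocycle for $[m\circ\chi,b)_{nm}$. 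Since both $\mu_n$ and $\mu_{nm}$ embed into $(k^s)^\times$ compatibly, this produces the claimed equality in $\mathrm{H}^2(G_k,(k^s)^\times)$.

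The difficulty here is really bookkeeping rather than a genuine mathematical obstruction. The two points that need care are (a) making the Galois-algebra/torsor correspondence explicit enough that the structure on $L^{\times m}$ is manifestly canonical, and (b) fixing a single compatible system of roots from the start so that the two Kummer cocycles satisfy the literal identity $[b]_n=m\cdot [b]_{nm}$ in $\mu_{nm}$, not merely an identity up to coboundary. A coordinate-free alternative to the cocycle manipulation is to combine bilinearity of the cup product with the identity that $\mu_n\hookrightarrow\mu_{nm}$ carries $[b]_n$ to $m\cdot[b]_{nm}$, giving $\chi\cup[b]_n=\chi\cup (m\cdot [b]_{nm})=(m\chi)\cup [b]_{nm}$ in $\mathrm{H}^2(G_k,\mu_{nm})$ directly.
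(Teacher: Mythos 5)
Your proof is correct, but it takes a genuinely different route from the paper's. The paper works entirely on the algebra side: after writing down the cyclic permutation-with-twist action on $L^{\times m}$ (which matches the contracted-product description you derive from the torsor viewpoint), it proves $[\chi,b)_n=[m\circ\chi,b)_{nm}$ by exhibiting an explicit isomorphism of central simple algebras $(L^{\times m},b)\cong M_m\bigl((L,b)\bigr)$, sending $\vec{a}\mapsto\mathrm{diag}(a_1,\dots,a_m)$ and $z$ to a companion-matrix-like element built from $w$. You instead stay on the cohomology side and run a cocycle-level Kummer-theory computation with a compatible choice of roots $\sqrt[nm]{b}$ and $\sqrt[n]{b}=(\sqrt[nm]{b})^m$. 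The two approaches buy different things: the paper's argument produces a concrete algebra isomorphism that is reusable whenever one wants to manipulate the cyclic algebras themselves, while your argument is shorter and makes the compatibility transparent at the level of symbols, which is what is actually invoked later in the paper.

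Two small cautions worth flagging. First, your displayed formulas for the cup-product $2$-cocycles omit the Galois twist — the standard cocycle is $(\sigma,\tau)\mapsto \sigma\bigl([b]_n(\tau)\bigr)^{\chi(\sigma)}$ rather than $[b]_n(\tau)^{\chi(\sigma)}$ — but since the same omission occurs on both sides and the substitution $[b]_n(\tau)=[b]_{nm}(\tau)^m$ commutes with $\sigma$, the conclusion is unaffected. Second, the final one-line ``coordinate-free alternative'' $\chi\cup[b]_n=\chi\cup(m\cdot[b]_{nm})=(m\chi)\cup[b]_{nm}$ glosses over the fact that the two Galois symbols are formed using different pairings ($\mathbb{Z}/n\mathbb{Z}\otimes\mu_n\to\mu_n$ versus $\mathbb{Z}/nm\mathbb{Z}\otimes\mu_{nm}\to\mu_{nm}$), and the square formed by these pairings together with $m\colon\mathbb{Z}/n\mathbb{Z}\to\mathbb{Z}/nm\mathbb{Z}$ and $\mu_n\hookrightarrow\mu_{nm}$ does \emph{not} commute; making that shortcut rigorous essentially requires factoring the auxiliary pairing $(c,\zeta)\mapsto\zeta^{mc}$ and reduces to the cocycle check you already carried out, so the explicit computation is the part that should be regarded as the proof.
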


\begin{proof}
Let $\sigma=1\in \mathbb{Z}/nm\mathbb{Z}$. Then the action of $\sigma$ on $L^{\times m}$ is defined as follows \[\sigma(a_1,...,a_m)=(\bar{\sigma}(a_m), a_1, ..., a_{m-1})\] where $\bar{\sigma}=1\in \mathbb{Z}/n\mathbb{Z}$. One checks that this makes $L^{\times m}$ a Galois $(\mathbb{Z}/nm\mathbb{Z})$-algebra and hence it corresponds to a character $\chi'\in \mathrm{H}^1(G_k,\mathbb{Z}/nm\mathbb{Z})$. Following \cite[(28.15) Example]{MR1632779}, one can also check that this character is $\chi'= m\circ \chi$.

To see that $[\chi,b)_n=[\chi',b)_{nm}$, we will show that the associated algebras in $\mathrm{Br}(k)$ are Brauer equivalent via the crossed-product map. Let $(L^{\times m},b)$ be generated by $L^{\times m}$ and an element $z$ such that $z^{nm}=b$ and let $(L,b)$ be generated by $L$ and an element $w$ with $w^n=b$. Then there is an isomorphism \[\varphi:(L^{\times m},b)\cong M_m((L,b))\] defined on $L^{\times m}$ as \[\varphi(\vec{a})=\mathrm{diag}(a_1,...,a_m) \quad\quad \mbox{for all } \vec{a}=(a_1,...,a_m)\in L^{\times m}\] and on the element $z$ as \[ \varphi(z)=\begin{pmatrix}0 & \cdots & 0 & w \\ 1 & \cdots & 0 & 0\\ \vdots & \ddots & \vdots & \vdots \\ 0 & \hdots & 1 & 0 \end{pmatrix}.\] One can check directly that $\varphi(z)^{nm}=b\cdot \mathrm{Id}_{m\times m}$ and $\varphi(\sigma(\vec{a}))\varphi(z)=\varphi(z)\varphi(\vec{a})$, so the map is well-defined. The map $\varphi$ is therefore an isomorphism for dimension reasons.
\end{proof}

\subsection{The obstruction map \texorpdfstring{$\Delta_t$}{Deltat}}
Now assume that $n\geq 2$ is an integer indivisible by the characteristic of the base field $k$. Suppose also that we have an isomorphism of group schemes $\phi_n:E[n]\cong \mathbb{Z}/n\mathbb{Z}\times \mu_n$. We fix throughout this subsection a primitive $n$th root of unity $\zeta_n\in k^s$. There are then points $t,s\in E[n](k^s)$ such that $\phi_n(t)=(1,1)$ and $\phi_n(s)=(0,\zeta_n)$. Let $\mathcal{L}=\mathcal{O}(t+\cdots + nt)$ be the associated line bundle and write $\zeta_n^c=e^{\mathcal{L}}(t,s)$ for the primitive $n$th root of unity gotten from the Weil pairing of $E[n]$ by (\ref{eq:comm}). Note that we will have $c\not\equiv 1 \pmod{n}$ in general.

\begin{lemma}\label{lem: delta}
Let $\delta_s$ be the composition of $\iota_*$ and $\Delta_t$, \[ \delta_s:=\Delta_t\circ \iota_*:\mathrm{H}^1(G_k,\mu_n(k^s))\xrightarrow{\iota_*} \mathrm{H}^1(G_k,E[n](k^s))\xrightarrow{\Delta_t} \mathrm{H}^2(G_k,(k^s)^\times)\] where $\iota$ is the composition \[\iota:\mu_n\simeq 0\times \mu_n\xrightarrow{\phi_n^{-1}} E[n].\] 
Then the following statements hold:
\begin{enumerate}[label=(R\arabic*)]
\item\label{lem:natres} $\delta_s$ is natural with respect to restrictions along separable subfields $k^s\supset F \supset k$.
\item\label{lem:group} $\delta_s$ is a group homomorphism.
\item\label{lem:2tor} For any element $b\in \mathrm{H}^1(G_k,\mu_n(k^s))$ we have $2\delta_s(b)=0$.
\end{enumerate}
\end{lemma}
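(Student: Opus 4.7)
The plan is to handle the three assertions in order, leaning on naturality of the connecting map, the bilinear identity (\ref{eq:bilinear}) of Remark \ref{rmk:quadratic}, and the identification of the commutator pairing on $\Theta_{\mathcal{L}}$ with the Weil pairing of $E[n]$.

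For \ref{lem:natres}, both constituents of $\delta_s = \Delta_t \circ \iota_*$ are natural under restriction along any separable extension $k \subset F \subset k^s$: the level structure $\phi_n$ (and hence $\iota$) is defined over $k$, so $\iota_*$ commutes with restriction, and $\Delta_t$ is the connecting map of the central extension (\ref{eq:centext}) of $k$-group schemes, which also commutes with restriction. Thus so does their composition.

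For \ref{lem:group}, I would use that the commutator pairing $e^{\mathcal{L}}$ of $\Theta_{\mathcal{L}}$ coincides with the Weil pairing of $E[n]$ (see the discussion after (\ref{eq:comm})). Because the Weil pairing is bilinear and alternating, its restriction to the cyclic subgroup $\iota(\mu_n) \subset E[n]$ is identically trivial. Specializing (\ref{eq:bilinear}) to $a = \iota_*(b_1)$ and $b = \iota_*(b_2)$ and using functoriality of the cup product then gives
\[
\delta_s(b_1+b_2) - \delta_s(b_1) - \delta_s(b_2) = e^{\mathcal{L}}_*\bigl(\iota_*(b_1) \cup \iota_*(b_2)\bigr) = 0,
\]
so $\delta_s$ is additive.

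For \ref{lem:2tor}, my approach is to verify that $\mathcal{L}$ is symmetric in Mumford's sense, so that the quadratic identity $\Delta_{\mathcal{L}}(ma) = m^2 \Delta_{\mathcal{L}}(a)$ from Remark \ref{rmk:quadratic} becomes available. Writing $D = [t] + [2t] + \cdots + [nt]$ for the defining divisor and using that $t$ has order $n$ in $E$ (so $-kt = (n-k)t$ for $1 \le k \le n-1$ and $-nt = p$), a direct computation gives $[-1]^*D = [(n-1)t] + [(n-2)t] + \cdots + [t] + [p] = D$ as Weil divisors, and hence $[-1]^*\mathcal{L} \cong \mathcal{L}$. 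Combining the quadratic property with (\ref{eq:bilinear}) applied at $a = b$ then yields the identity $2\Delta_{\mathcal{L}}(a) = e^{\mathcal{L}}_*(a \cup a)$ for every $a \in \mathrm{H}^1(G_k, E[n](k^s))$. Substituting $a = \iota_*(b)$ and reusing the isotropy input from \ref{lem:group} gives $2\delta_s(b) = 0$. The only nontrivial step is spotting that $\mathcal{L}$ is symmetric even when $n$ is even (where $\mathcal{L}$ is not an $n$-th power of $\mathcal{O}(p)$); this is not obvious from the definition of $\mathcal{L}$ but becomes immediate once the divisor $D$ is written out and the relations in $E[n]$ are used.
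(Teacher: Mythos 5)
Your proof is correct and follows essentially the same route as the paper: naturality of group-scheme morphisms and connecting maps for \ref{lem:natres}, the bilinear identity (\ref{eq:bilinear}) plus the fact that the Weil pairing vanishes on the cyclic summand $\iota(\mu_n)$ for \ref{lem:group}, and the quadratic property of $\Delta_{\mathcal{L}}$ for \ref{lem:2tor}. For the last item the paper cancels via $4\delta_s(b)=\delta_s(2b)=2\delta_s(b)$, which is the same use of the quadratic identity that you package as $2\Delta_{\mathcal{L}}(a)=e^{\mathcal{L}}_*(a\cup a)$ before substituting $a=\iota_*(b)$. One genuine contribution of your write-up: the paper invokes the quadratic property silently, whereas Remark \ref{rmk:quadratic} hedges it on $\mathcal{L}$ being symmetric, and your explicit check that $[-1]^*D=D$ for $D=[t]+[2t]+\cdots+[nt]$ (including the even-$n$ case, where $\mathcal{L}\not\cong\mathcal{O}(p)^{\otimes n}$) closes that small gap cleanly.
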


\begin{proof}
The meaning of \ref{lem:natres} is that $\delta_s$ and the analogously defined map defined over a separable extension $F/k$ contained in $k^s$ forms a commutative square with respect to restrictions. This follows from the fact that the inclusion $\mathrm{H}^1(G_k,\mu_n(k^s))\rightarrow \mathrm{H}^1(G_k,E[n](k^s))$ is induced by a morphism of group schemes together with the same compatibility statement for $\Delta_t$. For $\Delta_t$, this compatibility similarly follows from the fact that the central extension (\ref{eq:centext}) is an extension of group schemes.

For \ref{lem:group}, we can note by Remark \ref{rmk:quadratic} that \[e^\mathcal{L}_*(\iota_*(a)\cup \iota_*(b))=\delta_s(a+b)-\delta_s(a)-\delta_s(b)\] for any $a,b\in \mathrm{H}^1(G_k,\mu_n(k^s))$. However, since the Weil pairing is alternating we have $e^\mathcal{L}_*(\iota_*(a)\cup\iota_*(b))=0$.

Finally, \ref{lem:2tor} follows from the fact that $\delta_s$ is both a homomorphism and quadratic: 
\[4\delta_s(b)=\delta_s(2b)=\delta_s(b+b)=\delta_s(b)+\delta_s(b)\] which holds for any $b\in \mathrm{H}^1(G_k,\mu_n(k^s))$.
\end{proof}

\begin{theorem}\label{thm:antieauauel} Let $e^\mathcal{L}(\bullet,-):E[n]\rightarrow E[n]^\vee$ denote the isomorphism of group schemes induced by the Weil pairing (\ref{eq:comm}) which assigns to a point $z\in E[n](k^s)$ the functional $e^\mathcal{L}(z,-):E[n](k^s)\rightarrow \mu_n(k^s)$. Denote by $ev\circ (\phi_n^{-1})^\vee:E[n]^\vee\cong \mu_n\times \mathbb{Z}/n\mathbb{Z}$ the isomorphism of group schemes composing the dual of $\phi_n^{-1}$ with the evaluation at $\phi_n(t)$ and $\mathrm{log}_{\zeta_n^c}$ of the evaluation at $\phi_n(s)$. Then the obstruction map $\Delta_t$ fits into the following commuting diagram.
\begin{equation}
\begin{tikzcd}
\mathrm{H}^1(G_k,E[n](k^s))\arrow["e^{\mathcal{L}}{(\bullet,-)}_{*}"]{r}\arrow["{\Psi_{\mathcal{L}}}_*"]{d} & \mathrm{H}^1(G_k,E[n]^\vee(k^s))\arrow["ev\circ (\phi_n^{-1})^\vee_*"]{r} & \mathrm{H}^1(G_k,\mu_n(k^s))\times \mathrm{H}^1(G_k,\mathbb{Z}/n\mathbb{Z})\arrow["{(\mathrm{id}_*,\, \cup)}"]{d} \\ \mathrm{H}^1(G_k,\mathrm{PGL}_n(k^s))\arrow["\partial"]{dr} & & \mathrm{H}^1(G_k,\mu_n(k^s))\times \mathrm{H}^2(G_k,\mu_n(k^s))\arrow["{(b,a)\mapsto a+\delta_s(b)}"]{dl} \\ & \mathrm{H}^2(G_k,(k^s)^\times)\hspace{-0.5 em} & 
\end{tikzcd}
\end{equation}
Equivalently, if for an element $\xi\in \mathrm{H}^1(G_k,E[n](k^s))$ the push-forward $(\phi_n)_*(\xi)$ corresponds to a character $\chi\in\mathrm{H}^1(G_k,\mathbb{Z}/n\mathbb{Z})$ and a class $[b]\in k^\times/k^{\times n}\cong\mathrm{H}^1(G_k,\mu_n(k^s))$ then we have \[\Delta_t([\xi])=c[\chi,b)_n+\delta_s([b])\] where $c\in \mathbb{Z}$ is such that $\zeta_n^c=e^{\mathcal{L}}(t,s)$.
\end{theorem}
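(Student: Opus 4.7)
The plan is to express $\xi \in H^1(G_k, E[n](k^s))$ as a sum $\xi = \xi_1 + \xi_2$ using the level structure $\phi_n$, where $\xi_1$ corresponds to the character $\chi \in H^1(G_k, \mathbb{Z}/n\mathbb{Z})$ and factors through the $k$-rational subgroup $\langle t\rangle \subset E[n]$, and $\xi_2 = \iota_*([b])$ corresponds to $[b] \in H^1(G_k, \mu_n)$ and factors through $\langle s\rangle$. The bilinearity identity of Remark~\ref{rmk:quadratic} then reduces the theorem to evaluating three terms:
\[
\Delta_t(\xi) \;=\; \Delta_t(\xi_1) \;+\; \Delta_t(\xi_2) \;+\; e^{\mathcal{L}}_*(\xi_1 \cup \xi_2).
\]

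Of these three, $\Delta_t(\xi_2) = \delta_s([b])$ is immediate from the definition of $\delta_s$. The cross term $e^{\mathcal{L}}_*(\xi_1 \cup \xi_2)$ can be computed directly: since $\xi_1$ and $\xi_2$ take values in $\langle t\rangle$ and $\langle s\rangle$ respectively, the restriction of the pairing $e^{\mathcal{L}}: E[n] \otimes E[n] \to \mu_n$ to $\langle t\rangle \otimes \langle s\rangle$ is determined by its single value $e^{\mathcal{L}}(t, s) = \zeta_n^c$, so that $e^{\mathcal{L}}_*(\xi_1 \cup \xi_2) = c \cdot (\chi \cup [b]) = c[\chi, b)_n$ by the definition of the Galois symbol in Section~\ref{ss:cyclicalg}.

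The main obstacle is showing $\Delta_t(\xi_1) = 0$. I would establish this by proving that the restriction of the theta extension $1 \to \mathbb{G}_m \to \Theta_{\mathcal{L}} \to E[n] \to 1$ to $\langle t\rangle$ admits a $k$-rational splitting, which forces the induced connecting map $H^1(G_k, \langle t\rangle(k^s)) \to H^2(G_k, (k^s)^\times)$ to vanish. By Mumford's classical correspondence \cite{MR0204427} between splittings of theta extensions over finite subgroup schemes of $K(\mathcal{L}) = E[n]$ and descents of $\mathcal{L}$ along the corresponding quotient isogeny, it suffices to check that $\mathcal{L}$ is pulled back from a line bundle on $E/\langle t\rangle$. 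This is transparent in our case: the divisor $t + 2t + \cdots + nt$ is exactly the scheme-theoretic preimage of the identity under the isogeny $\pi: E \to E/\langle t\rangle$, so $\mathcal{L} \cong \pi^* \mathcal{O}_{E/\langle t\rangle}([0])$, and the resulting splitting is defined over $k$ because $t \in E(k)$.

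Assembling these contributions yields $\Delta_t([\xi]) = c[\chi, b)_n + \delta_s([b])$, which is the closing formula of the theorem. The commuting diagram at the head of the statement is then an unpacking of this equality: tracing an element through the Weil pairing and the dual basis recovers the same data up to normalization by $\zeta_n^c$, and reconciling the two expressions relies on graded commutativity of the cup product together with Lemma~\ref{lem: delta}\ref{lem:2tor}, which makes $\delta_s$ insensitive to the sign of $c$ modulo $2$. I expect this final bookkeeping of signs (antisymmetry of $e^{\mathcal{L}}$, anti-commutativity of cup products in degree $1 \times 1$, and the $2$-torsion of $\delta_s$) to be the only genuinely fiddly part of the proof.
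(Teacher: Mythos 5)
Your proof is correct, but it takes a genuinely different route from the one in the paper. The paper's argument is a direct cocycle computation: it picks set-theoretic lifts $M_t,M_s\in\mathrm{GL}_n(k^s)$ of $\Psi_{\mathcal{L}}(t),\Psi_{\mathcal{L}}(s)$, arranging (via Hilbert~90 and \cite[Corollary 2.8]{MR1825265}) that $M_t\in\mathrm{GL}_n(k)$ with $M_t^n=\mathrm{Id}$, and then simplifies the $2$-cocycle $M_{\xi(\sigma)}\,\sigma(M_{\xi(\tau)})\,M_{\xi(\sigma\tau)}^{-1}$ by pushing everything through the commutator relation $e^{\mathcal{L}}$ inside $\Theta_{\mathcal{L}}$. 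You instead invoke Zarkhin's bilinearity formula \eqref{eq:bilinear} from Remark~\ref{rmk:quadratic} --- which the paper records but never actually uses in its own proof of this theorem --- to split $\Delta_t(\xi)=\Delta_t(\xi_1)+\Delta_t(\xi_2)+e^{\mathcal{L}}_*(\xi_1\cup\xi_2)$, read off $\Delta_t(\xi_2)=\delta_s([b])$ and $e^{\mathcal{L}}_*(\xi_1\cup\xi_2)=c[\chi,b)_n$ immediately, and reduce the theorem to the vanishing $\Delta_t(\xi_1)=0$, which you get from Mumford's descent criterion applied to the isotropic subgroup $\langle t\rangle$: since $\mathcal{L}\cong\pi^*\mathcal{O}_{E/\langle t\rangle}(\bar 0)$ and the isogeny $\pi$, the quotient curve, and the descent isomorphism are all defined over $k$, the canonical level subgroup is $G_k$-stable and gives a $k$-rational splitting of $\Theta_{\mathcal{L}}|_{\langle t\rangle}$. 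The two arguments are not unrelated: the paper's choice of $M_t\in\mathrm{GL}_n(k)$ with $M_t^n=\mathrm{Id}$ is precisely the $k$-rational section of $\Theta_{\mathcal{L}}|_{\langle t\rangle}$ that you produce geometrically, so the same fact is being used under the hood. What the paper's computation buys is explicit cocycle formulas (useful, e.g., for the error-term analysis in Theorem~\ref{thm:2nstructure}); what your approach buys is conceptual transparency --- in particular it makes it obvious why the $c$ appears as a coefficient and why $\delta_s$ is exactly the defect. One small gap worth filling before you write this up in full: you should say a word about why the connecting map for $\Theta_{\mathcal{L}}\to E[n]$, evaluated on classes coming from $\mathrm{H}^1(G_k,\langle t\rangle(k^s))$, agrees with the connecting map for the pulled-back extension $\Theta_{\mathcal{L}}|_{\langle t\rangle}\to\langle t\rangle$; this is a standard compatibility of nonabelian cohomology with pullback along $\langle t\rangle\hookrightarrow E[n]$, but it is the hinge that lets the Mumford splitting imply $\Delta_t(\xi_1)=0$ and deserves a sentence.
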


\begin{proof}
We can check this directly at the level of cocycles. To start, let $\xi:G_k\rightarrow E[n](k^s)$ be any continuous $1$-cocycle with class $[\xi]\in \mathrm{H}^1(G_k,E[n](k^s))$. We'll write $M_t,M_s\in \mathrm{GL}_n(k^s)$ for lifts of the elements $\Psi_{\mathcal{L}}(t)$ and $\Psi_{\mathcal{L}}(s)$, respectively, under the map $\Psi_{\mathcal{L}}:E[n](k^s)\rightarrow \mathrm{PGL}_n(k^s)$. Note that, by Hilbert's theorem 90 and because $t\in E[n](k)$ is defined $k$-rationally, we can choose $M_t\in \mathrm{GL}_n(k)$. Further, by \cite[Corollary 2.8]{MR1825265} we can arrange so that $M_t^n=\mathrm{Id}_{n\times n}$. We then define lifts of the elements $at+bs\in E[n](k^s)$ for any pair of integers $0\leq a,b <n$ by setting $M_{at+bs}:=M_t^a\cdot M_s^b$.

For any $\sigma \in G_k$, write $\xi(\sigma)=a_\sigma t+ b_\sigma s$ for integers $0\leq a_{\sigma},b_{\sigma}<n$. Under the isomorphism \[(\phi_n)_*:\mathrm{H}^1(G_k,E[n](k^s))\xrightarrow{\sim} \mathrm{H}^1(G_k,\mathbb{Z}/n\mathbb{Z})\times \mathrm{H}^1(G_k,\mu_n(k^s))\] we have $(\phi_n)_*([\xi])=([\alpha],[\beta])$ where $\alpha(\sigma)=a_\sigma$ and $\beta(\sigma)=\zeta_n^{b_{\sigma}}$. Following the definition of $\partial\circ {\Psi_{\mathcal{L}}}_*$ applied to $\xi$ then yields the following $2$-cocycle
\begin{align*} \partial\circ {\Psi_{\mathcal{L}}}_*(\xi)(\sigma,\tau) &= M_{\xi(\sigma)}\cdot \sigma(M_{\xi(\tau)}) \cdot (M_{\xi(\sigma\tau)})^{-1}\\ &= (M_t^{a_{\sigma}} M_s^{b_{\sigma}}) \cdot \sigma(M_{t}^{a_\tau}\cdot M_s^{b_\tau}) \cdot (M_s^{-b_{\sigma\tau}} M_t^{-a_{\sigma\tau}})\\ &= (M_t^{a_{\sigma}} M_s^{b_{\sigma}}) \cdot M_{t}^{a_\tau} \sigma(M_s^{b_\tau}) \cdot (M_s^{-b_{\sigma\tau}} M_t^{-a_{\sigma\tau}}) \end{align*} for any $\sigma,\tau \in G_k$.

Since both $M_{s}^{b_{\sigma\tau}}$ and $M_{s}^{b_\sigma}\cdot \sigma(M_s^{b_{\tau}})$ lift $\Psi_{\mathcal{L}}(\phi_n^{-1}(\beta(\sigma\tau)))\in \mathrm{PGL}_n(k^s)$, there exists a constant $d_{\sigma,\tau}\in (k^s)^\times$ such that \[M_{s}^{b_{\sigma\tau}}=d_{\sigma,\tau}\cdot M_{s}^{b_\sigma}\sigma(M_s^{b_{\tau}}).\] Note that this also implies $(1/d_{\sigma,\tau})=\partial\circ{\Psi_{\mathcal{L}}}_*((\phi_n)_*^{-1}(0,[\beta]))(\sigma,\tau)$. Moreover, by the commutativity of (\ref{eq:projrep}), we get \begin{equation} \label{eq:deltahereitis}(1/d_{\sigma,\tau})=\partial\circ{\Psi_{\mathcal{L}}}_*((\phi_n)_*^{-1}(0,[\beta]))(\sigma,\tau)=\delta_s([\beta])(\sigma,\tau).\end{equation} Plugging this into the above gives
\begin{align*} \partial\circ {\Psi_{\mathcal{L}}}_*(\xi)(\sigma,\tau) &= (M_t^{a_{\sigma}} M_s^{b_{\sigma}}) \cdot M_{t}^{a_\tau} \sigma(M_s^{b_\tau}) \cdot (M_s^{-b_{\sigma\tau}} M_t^{-a_{\sigma\tau}})\\
&= (1/d_{\sigma,\tau})\cdot (M_t^{a_{\sigma}} M_s^{b_{\sigma}}) \cdot M_{t}^{a_\tau} \sigma(M_s^{b_\tau})\cdot (\sigma(M_s^{-b_{\tau}})\cdot M_s^{-b_{\sigma}}M_t^{-a_{\sigma\tau}})\\
&=(1/d_{\sigma,\tau})\cdot M_t^{a_{\sigma}} M_s^{b_{\sigma}} \cdot M_{t}^{a_\tau}\cdot M_s^{-b_{\sigma}}M_t^{-a_{\sigma\tau}}\\
&=(1/d_{\sigma,\tau})\cdot M_t^{a_{\sigma}}\cdot (M_t^{a_{\tau}}M_t^{-a_{\tau}})\cdot M_s^{b_{\sigma}} M_{t}^{a_\tau} M_s^{-b_{\sigma}}M_t^{-a_{\sigma\tau}}\\
&=(1/d_{\sigma,\tau})\cdot M_t^{a_{\sigma}}M_t^{a_{\tau}}\cdot (M_t^{-a_{\tau}}\cdot M_s^{b_{\sigma}} \cdot M_{t}^{a_\tau}\cdot M_s^{-b_{\sigma}})\cdot M_t^{-a_{\sigma\tau}}\\
&=(1/d_{\sigma,\tau}) \cdot e^{\mathcal{L}}(-a_{\tau}t,b_{\sigma}s)\cdot M_t^{a_{\sigma}} M_t^{a_{\tau}} M_t^{-a_{\sigma\tau}}
\end{align*} Now we always have $M_t^{a_\sigma} M_t^{a_{\tau}}M_t^{-a_{\sigma\tau}}=\mathrm{Id}_{n\times n}$. This is clear if $a_\sigma +a_\tau<n$ and holds by our assumption $M_t^n=\mathrm{Id}_{n\times n}$ when $a_\sigma+a_\tau\geq n$. Altogether, we find that \begin{equation} \label{eq:leftside}\partial\circ {\Psi_{\mathcal{L}}}_*(\xi)(\sigma,\tau)=(1/d_{\sigma,\tau})\cdot e^{\mathcal{L}}(-a_{\tau}t,b_{\sigma}s)=(1/d_{\sigma,\tau})\cdot \left(\zeta_n^c\right)^{-a_{\tau}b_\sigma}\end{equation} for $\partial\circ{\Psi_{\mathcal{L}}}_*((\phi_n)_*^{-1}(0,[\beta]))(\sigma,\tau)=(1/d_{\sigma,\tau})\in (k^s)^\times$.

Keeping the notation above and going the other way around the diagram starts by \begin{align*} ev\circ (\phi_n^{-1})^\vee_*\left(e^{\mathcal{L}}(\bullet,-)_*(\xi)\right)(\sigma) & = ev\circ (\phi_n^{-1})^\vee\left(e^{\mathcal{L}}(a_\sigma t+b_\sigma s, -)\right)\\
 & = \left(e^{\mathcal{L}}(a_\sigma t+b_\sigma s,t), \log_{\zeta_n^c}\left(e^{\mathcal{L}}(a_\sigma t+b_\sigma s,s)\right)\right)\\
 &= ((\zeta_n^c)^{-b_\sigma},a_\sigma)\end{align*} so that $ev\circ (\phi_n^{-1})^\vee_*\left(e^{\mathcal{L}}(\bullet,-)_*(\xi)\right)=([\beta]^{-c},[\alpha])$. The cup-product in this case is the composition \begin{equation}\label{eq:cup}\cup:\mathrm{H}^1(G_k,\mu_n(k^s))\times \mathrm{H}^1(G_k,\mathbb{Z}/n\mathbb{Z})\xrightarrow{\cup_\otimes} \mathrm{H}^2(G_k,\mu_n(k^s)\otimes_{\mathbb{Z}} \mathbb{Z}/n\mathbb{Z})\rightarrow \mathrm{H}^2(G_k,\mu_n(k^s)).\end{equation} Here the first map is given, for any $\sigma,\tau\in G_k$, by \[([\beta]^{-c}\cup_\otimes [\alpha])(\sigma,\tau)=\zeta_n^{-cb_\sigma }\otimes \sigma{a_\tau}=\zeta_n^{-cb_\sigma }\otimes a_\tau.\] The second map in (\ref{eq:cup}) is multiplication, so \[ ([\beta]^{-c}\cup [\alpha])(\sigma,\tau)=(\zeta_n^c)^{-b_{\sigma}a_\tau}.\] The graded-commutativity of the cup-product implies that this is the same as $[\alpha]\cup[\beta]^c$. Finally, we observe from (\ref{eq:leftside}) and (\ref{eq:deltahereitis}) that \[\partial\circ\Psi_{\mathcal{L}_*}(\xi)=[\alpha]\cup [\beta]^c+\delta_s([\beta])=c([\alpha]\cup [\beta])+\delta_s([\beta])\] as claimed in the theorem statement.
\end{proof}

Recall that there is a bilinear \textit{determinant pairing} \begin{equation} \mathrm{det}:\left(\mathbb{Z}/n\mathbb{Z}\times \mu_n\right) \times \left(\mathbb{Z}/n\mathbb{Z}\times \mu_n\right) \rightarrow \mu_n \quad \quad \det((a,\zeta),(b,\xi))=\xi^a\zeta^{-b}.\end{equation} We say that $\phi_n:E[n]\cong \mathbb{Z}/n\mathbb{Z}\times \mu_n$ is a \textit{symplectic full level-$n$ structure} if the determinant pairing agrees with that of (\ref{eq:comm}) above under $\phi_n$, i.e.\ \begin{equation} e^{\mathcal{L}}(x,y)=\mathrm{det}(\phi_n(x),\phi_n(y))\end{equation} for all $x,y\in E[n](k^s)$.

Since $n$ is indivisible by the characteristic of $k$, the Weil-pairing (\ref{eq:comm}) induces an isomorphism of $G_k$-modules $\bigwedge^2 E[n](k^s)\cong \mu_n(k^s)$. Similarly, the determinant induces an isomorphism $\bigwedge^2(\mathbb{Z}/n\mathbb{Z}\times \mu_n(k^s))\cong \mu_n(k^s)$. By definition, the composition \begin{equation}\label{eq:sympcomp}\Phi_n: \mu_n(k^s)\cong \bigwedge^2 E[n](k^s)\xrightarrow{\bigwedge^2\phi_n(k^s)} \bigwedge^2 (\mathbb{Z}/n\mathbb{Z}\times \mu_n(k^s)) \cong \mu_n(k^s)\end{equation} is the identity if and only if $\phi_n$ is symplectic. By a direct check, we have $\Phi_n(\zeta^c_n)=\zeta_n$. Hence $c\equiv 1 \pmod{n}$ if and only if $\phi_n$ is symplectic. Thus we've shown:

\begin{corollary}[cf.\ {\cite[Propositions 2.13 and 2.14]{https://doi.org/10.48550/arxiv.2106.04291}}]
  \label{corollary:imageOfSplitting}
Let $\phi_n:E[n]\cong \mathbb{Z}/n\mathbb{Z}\times \mu_n$ be a symplectic full level-$n$ structure. Let $\xi\in \mathrm{H}^1(G_k,E[n](k^s))$ be any element. Then we have \[\Delta_t([\xi])=[\chi,b)_n+\delta_s([b])\] where $(\phi_n)_*([\xi])=(\chi,[b])\in \mathrm{H}^1(G_k,\mathbb{Z}/n\mathbb{Z})\times \mathrm{H}^1(G_k,\mu_n(k^s))$.$\hfill\square$
\end{corollary}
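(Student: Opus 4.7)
The plan is to derive this corollary as an immediate consequence of Theorem \ref{thm:antieauauel} together with the symplecticity assumption on $\phi_n$, so the work will reduce to identifying the integer $c$ appearing in the general formula.

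First, I would apply Theorem \ref{thm:antieauauel} directly to the given $\xi\in \mathrm{H}^1(G_k, E[n](k^s))$. Writing $(\phi_n)_*([\xi]) = (\chi, [b])$ as in the hypothesis, the theorem yields
\[
\Delta_t([\xi]) = c[\chi, b)_n + \delta_s([b]),
\]
where $c\in\mathbb{Z}$ is the integer satisfying $\zeta_n^c = e^{\mathcal{L}}(t, s)$. The only remaining content of the corollary is to show that under the symplecticity hypothesis we may take $c \equiv 1 \pmod{n}$; since $[\chi, b)_n$ lies in $\mathrm{H}^2(G_k, \mu_n(k^s))$ and is therefore $n$-torsion, the coefficient $c$ only matters modulo $n$, so this replacement is harmless.

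Next, I would invoke the computation recorded in the paragraph preceding the corollary, namely the composition $\Phi_n$ of (\ref{eq:sympcomp}). By its construction $\Phi_n$ is obtained from the Weil pairing on $E[n]$ and the determinant pairing on $\mathbb{Z}/n\mathbb{Z}\times \mu_n$ via $\bigwedge^2 \phi_n$, and $\phi_n$ is symplectic precisely when $\Phi_n$ is the identity on $\mu_n(k^s)$. Since $t, s\in E[n](k^s)$ were chosen so that $\phi_n(t) = (1,1)$ and $\phi_n(s) = (0, \zeta_n)$, a direct unwinding of the determinant pairing gives $\det(\phi_n(t), \phi_n(s)) = \zeta_n$, while $e^{\mathcal{L}}(t, s) = \zeta_n^c$ by definition of $c$. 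Hence $\Phi_n(\zeta_n^c) = \zeta_n$, so the symplectic hypothesis forces $\zeta_n^c = \zeta_n$, i.e.\ $c\equiv 1\pmod{n}$.

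Combining these two steps, under the symplectic assumption the general formula collapses to $\Delta_t([\xi]) = [\chi, b)_n + \delta_s([b])$, which is exactly the claim. The proof is essentially a bookkeeping verification; the main obstacle (the explicit cocycle computation) is already dispatched by Theorem \ref{thm:antieauauel}, and here one only needs the clean identification of $c$ mod $n$ with the symplecticity defect, which is purely a computation with the two distinguished generators $t$ and $s$.
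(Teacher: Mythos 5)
Your proposal is correct and follows essentially the same route as the paper: invoke Theorem \ref{thm:antieauauel} to get the general formula $\Delta_t([\xi]) = c[\chi,b)_n + \delta_s([b])$, then use the identity $\Phi_n(\zeta_n^c) = \zeta_n$ from (\ref{eq:sympcomp}) together with the fact that $\Phi_n = \mathrm{id}$ iff $\phi_n$ is symplectic to conclude $c\equiv 1 \pmod n$. The observation that $c$ only matters modulo $n$ is already implicit since $c$ is defined by $\zeta_n^c = e^{\mathcal{L}}(t,s)\in\mu_n(k^s)$, but flagging it does no harm.
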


\begin{remark}
Given an arbitrary full level-$n$ structure $\phi_n:E[n]\cong \mathbb{Z}/n\mathbb{Z}\times \mu_n$, there is always an automorphism of the group scheme $\mathbb{Z}/n\mathbb{Z}\times \mu_n$ which will yield a symplectic full level-$n$ structure on composition with $\phi_n$. Specifically, if the map $\Phi_n$ from (\ref{eq:sympcomp}) above is exponentiation by some $1/c\in (\mathbb{Z}/n\mathbb{Z})^\times$ then the composition \[E[n]\xrightarrow{\phi}\mathbb{Z}/n\mathbb{Z}\times \mu_n\xrightarrow{c\times\mathrm{Id}} \mathbb{Z}/n\mathbb{Z}\times \mu_n\] is a symplectic full level-$n$ structure.
\end{remark}

\begin{corollary}\label{cor:cyclic}
Keep the notation of Theorem \ref{thm:antieauauel}. If $n$ is odd, then $\delta_s([b])=0$ and thus $\Delta_t([\xi])=c[\chi,b)_n$. On the other hand, if $n$ is even, then $\delta_s([b])=[\sigma,b)_2$ for some $\sigma\in \mathrm{H}^1(G_k,\mathbb{Z}/2\mathbb{Z})$. In this case we have \[\Delta_t([\xi])=c[\chi,b)_n+[\sigma,b)_2=[c\cdot\chi+(n/2)\circ \sigma,b)_n\] where $(n/2):\mathbb{Z}/2\mathbb{Z}\rightarrow \mathbb{Z}/n\mathbb{Z}$ is the inclusion-by-$(n/2)$ map. In particular, the obstruction $\Delta_t([\xi])$ is cyclic.
\end{corollary}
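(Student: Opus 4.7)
The plan is to analyze the error term $\delta_s([b])$ in the formula $\Delta_t([\xi]) = c[\chi, b)_n + \delta_s([b])$ supplied by Theorem \ref{thm:antieauauel}. First I would invoke Lemma \ref{lem: delta}: point \ref{lem:group} says $\delta_s$ is a group homomorphism, and since its domain $\mathrm{H}^1(G_k, \mu_n(k^s))$ is $n$-torsion the image is also $n$-torsion. Combined with \ref{lem:2tor} the image lies in the $\gcd(n,2)$-torsion of $\mathrm{H}^2(G_k, (k^s)^\times)$, which vanishes when $n$ is odd. In that case Theorem \ref{thm:antieauauel} immediately gives $\Delta_t([\xi]) = c[\chi,b)_n$.

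For even $n$ I would identify $\delta_s$ with a degree-$n$ symbol. The central extension (\ref{eq:centext}) pulled back along $\iota:\mu_n\hookrightarrow E[n]$ gives a central extension
\[1\to \mathbb{G}_m\to \Theta'_{\mathcal{L}}\to \mu_n\to 1,\]
whose connecting map in Galois cohomology is $\delta_s$. Because the Weil pairing $e^{\mathcal{L}}$ is alternating, its restriction to $\iota(\mu_n)$ vanishes and the extension is commutative. Over $k^s$, the (anti-)equivalence between multiplicative-type groups and finitely generated abelian groups identifies commutative extensions of $\mu_n$ by $\mathbb{G}_m$ with $\mathrm{Ext}^1_{\mathbb{Z}}(\mathbb{Z},\mathbb{Z}/n\mathbb{Z})=0$, so the extension admits a splitting over $k^s$; the failure of Galois equivariance of any such splitting then determines a class $\tau\in \mathrm{H}^1(G_k,\mathbb{Z}/n\mathbb{Z})$ (identifying $\mathrm{Hom}_{k^s}(\mu_n,\mathbb{G}_m)$ with $\mathbb{Z}/n\mathbb{Z}$ with trivial Galois action). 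The standard formalism of connecting maps then yields $\delta_s([b])=\tau\cup[b]=[\tau,b)_n$, where the cup product is taken via the evaluation pairing $\mathbb{Z}/n\mathbb{Z}\otimes \mu_n\to\mu_n$.

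I would then apply Lemma \ref{lem: delta}(\ref{lem:2tor}) to this description to get $[2\tau,b)_n=0$ for all $b\in k^\times$. This forces the Galois $(\mathbb{Z}/n\mathbb{Z})$-algebra $L_{2\tau}$ to be the split algebra $k^n$; otherwise, some $b\in k^\times$ would fail to be a norm from a nontrivial subfield of $L_{2\tau}$ and the associated cyclic algebra would be nontrivial. Hence $2\tau=0$ as a cohomology class. Since $\mathrm{H}^1(G_k,\mathbb{Z}/n\mathbb{Z})=\mathrm{Hom}_{\text{cont}}(G_k,\mathbb{Z}/n\mathbb{Z})$ and $(\mathbb{Z}/n\mathbb{Z})[2]=\{0,n/2\}$, the character $\tau$ factors as $\tau=(n/2)\circ\sigma$ for some $\sigma\in \mathrm{H}^1(G_k,\mathbb{Z}/2\mathbb{Z})$. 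Lemma \ref{lem:algebra} (applied with its $n$ set to $2$ and its $m$ set to $n/2$) then gives $[\tau,b)_n=[(n/2)\circ\sigma,b)_n=[\sigma,b)_2$, and bilinearity of the symbol in its first argument yields $\Delta_t([\xi])=c[\chi,b)_n+[\sigma,b)_2=[c\chi+(n/2)\circ\sigma,b)_n$, a degree-$n$ cyclic symbol.

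The hardest step will be identifying $\delta_s$ as a cup product $[\tau,b)_n$ for an explicit $\tau$. The multiplicative-type Ext computation above is one route; alternatively one could carry out a direct $2$-cocycle computation extending the analysis from the proof of Theorem \ref{thm:antieauauel}, tracking how a lift $M_s$ of $\Psi_{\mathcal{L}}(s)$ normalized to satisfy $M_s^n=\mathrm{Id}$ fails to be Galois equivariant.
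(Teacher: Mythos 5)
Your treatment of the odd case is correct and matches the paper: $\delta_s([b])$ is $2$-torsion by Lemma~\ref{lem: delta}\ref{lem:2tor} while the target is $n$-torsion, so it vanishes when $n$ is odd.

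For $n$ even you take a genuinely different route from the paper, and it has a real gap. Your core idea---pull the theta-group extension back along $\iota:\mu_n\hookrightarrow E[n]$, observe that the resulting extension of $\mu_n$ by $\mathbb{G}_m$ is commutative (since $e^{\mathcal{L}}$ is alternating and $\iota(\mu_n)$ is cyclic, hence isotropic), split it over $k^s$ via the anti-equivalence of diagonalizable groups with abelian groups, and read off $\delta_s([b])=[\tau,b)_n$ for a \emph{fixed} $\tau\in\mathrm{H}^1(G_k,\mathbb{Z}/n\mathbb{Z})$---is attractive, and it would already give the ``in particular'' cyclicity conclusion, since then $\Delta_t([\xi])=[c\chi+\tau,b)_n$. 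The problem is the next step. From $2\delta_s([b])=0$ you get $[2\tau,b)_n=0$ in $\mathrm{Br}(k)$ for \emph{all} $b$, but this does \emph{not} force $2\tau=0$ as a cohomology class: the implication ``$[\chi,b)_n=0$ for all $b\Rightarrow\chi=0$'' fails whenever the norm map from the corresponding cyclic extension is surjective, e.g.\ over any $C_1$ field (such as $\mathbb{C}((t))$ or $\mathbb{F}_q(t)$), where $\mathrm{Br}(k)=0$ but $\mathrm{H}^1(G_k,\mathbb{Z}/n\mathbb{Z})$ is nontrivial. So the factorization $\tau=(n/2)\circ\sigma$ and the statement $\delta_s([b])=[\sigma,b)_2$ with $\sigma\in\mathrm{H}^1(G_k,\mathbb{Z}/2\mathbb{Z})$ are not established by your argument.

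The paper proceeds differently and avoids needing a uniform $\tau$. Fixing $b$, it sets $L=k(\sqrt{b})$ and uses the naturality \ref{lem:natres} of $\delta_s$ under restriction together with the group-homomorphism property \ref{lem:group} and $2$-torsion \ref{lem:2tor} to compute $\mathrm{res}^L_k\delta_s([b])=\delta_s([b]_L)=\delta_s(2[\sqrt b])=2\delta_s([\sqrt b])=0$. Thus $\delta_s([b])$ is split by a quadratic extension and hence represents a quaternion algebra; since $\mu_2\cong\mathbb{Z}/2\mathbb{Z}$ in characteristic $\ne 2$, the quadratic symbol is symmetric and any quaternion algebra split by $k(\sqrt b)$ can be written $[\sigma,b)_2$ for some $\sigma$ (a priori depending on $b$, which is all the corollary requires). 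Lemma~\ref{lem:algebra} then converts this to the displayed degree-$n$ symbol. If you want to pursue your uniform-$\tau$ description, you would need to replace the norm argument by this restriction-to-$k(\sqrt b)$ argument to get the index bound, and then there is no need to know $2\tau=0$.
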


\begin{proof}
If $n$ is odd, then $\mathrm{H}^2(G_k,\mu_n(k^s))$ is $n$-torsion while $\delta_s([b])$ is $2$-torsion by Lemma \ref{lem: delta}. If $n$ is even, on the other hand, then let $L=k(\sqrt{b})$ be the field extension adjoining a square root of $b$ to $k$ and write $G_L$ for the absolute Galois group of $L$. Note that this field extension makes sense as, by our assumptions, when $n$ is even we are assuming that the characteristic of $k$ is not $2$.

We have the following commutative diagram from Lemma \ref{lem: delta}.
\[\begin{tikzcd}
\mathrm{H}^1(G_L,\mu_n(L^s))\arrow["\delta_s"]{r} &\mathrm{H}^2(G_L,\mu_n(L^s))\\
\mathrm{H}^1(G_k,\mu_n(k^s))\arrow["\delta_s"]{r}\arrow["res_k^L"]{u} &\mathrm{H}^2(G_k,\mu_n(k^s))\arrow["res_k^L"]{u}
\end{tikzcd}\] Now \[\mathrm{res}_k^L(\delta_s([b]))=\delta_s(\mathrm{res}_k^L([b]))=\delta_s([b])=2\delta_s([\sqrt{b}])=0.\]
Having $L/k$ as a splitting field shows that $\delta_s([b])\in \mathrm{H}^2(G_k,\mu_n(k^s))$ represents a quaternion algebra inside the Brauer group $\mathrm{Br}(k)$, see \cite[Proposition 2.5.3]{MR3727161}. Since the characteristic is not 2, there is an isomorphism of group schemes $\mu_2\cong \mathbb{Z}/2\mathbb{Z}$ in this case and so the Galois symbol is symmetric. We may therefore write $\delta_s([b])=[\sigma,b)_2$ for some $\sigma\in \mathrm{H}^1(G_k,\mathbb{Z}/2\mathbb{Z})$. The rest follows from Lemma \ref{lem:algebra}.
\end{proof}

Finer information than Theorem \ref{thm:antieauauel} is possible if one knows beforehand that $E[2n]$ also admits a full level-$2n$ structure $\phi_{2n}:E[2n]\cong \mathbb{Z}/2n\mathbb{Z}\times \mu_{2n}$.

\begin{theorem}\label{thm:2nstructure}
Suppose that $E[2n]$ admits a full level-$2n$ structure $\phi_{2n}:E[2n]\cong \mathbb{Z}/2n\mathbb{Z}\times \mu_{2n}$ which induces the full level-$n$ structure $\phi_n:E[n]\cong \mathbb{Z}/n\mathbb{Z}\times \mu_n$, i.e.\ assume that the diagram below is commutative \[\begin{tikzcd}E[2n]\arrow["\phi_{2n}"]{r}\arrow["q_E"]{d} & \mathbb{Z}/2n\mathbb{Z}\times \mu_{2n}\arrow["q\times p_2"]{d} \\ E[n]\arrow["\phi_{n}"]{r} & \mathbb{Z}/n\mathbb{Z} \times \mu_{n}  \end{tikzcd}\] where $q:\mathbb{Z}/2n\mathbb{Z}\rightarrow \mathbb{Z}/n\mathbb{Z}$ is the quotient map, $p_2:\mu_{2n}\rightarrow \mu_n$ is the squaring map, and $q_E:E[2n]\rightarrow E[n]$ is the multiplication-by-$2$ quotient. Fix also a primitive $2n$th root of unity $\zeta_{2n}$ with $\zeta_{2n}^2=\zeta_n$.

Then $\delta_s([b])=0$ for all $[b]\in k^\times/k^{\times n}\cong \mathrm{H}^1(G_k,\mu_{n}(k^s))$.
\end{theorem}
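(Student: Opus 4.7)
The plan is to reduce the theorem to an explicit vanishing statement about the theta group of $\mathcal{O}(p)^{\otimes n}$ restricted to $\mu_n$, which is then established using the rigidity provided by the full level-$2n$ structure.

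As a first step, I use the compatibility of $\phi_{2n}$ with $\phi_n$ to produce a $k$-rational point $t_2 := \phi_{2n}^{-1}(1,1) \in E[2n](k)$ satisfying $2t_2 = t$; this follows since commutativity of the diagram forces $\phi_n(2t_2) = (q(1),p_2(1)) = (1,1) = \phi_n(t)$. Computing in $\mathrm{Pic}^n_E(k)$ via the identifications of Remark \ref{rmk:Kummer}, one finds
\[
[\mathcal{L}] - n[\mathcal{O}(p)] = \tfrac{n(n+1)}{2}\, t \;=\; \tfrac{n}{2}\, t \;=\; n\,t_2
\]
in $E(k)$, where the middle equality uses $t \in E[n]$ (so $\tfrac{n(n+1)}{2} \equiv \tfrac{n}{2} \pmod{n}$ for $n$ even) and the last equality uses $2t_2 = t$. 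Consequently $\mathcal{L} \cong \mathcal{O}(t_2)^{\otimes n}$, and Remark \ref{rmk:deltatandn} yields the equality of obstruction maps $\Delta_t = \Delta_n$.

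It therefore suffices to prove that $\Delta_n \circ \iota_* = 0$, equivalently that the central extension of commutative group $k$-schemes $0 \to \mathbb{G}_m \to \Theta_n \times_{E[n]} \mu_n \to \mu_n \to 0$ obtained by pulling back the theta group $\Theta_n$ of $\mathcal{O}(p)^{\otimes n}$ along $\iota$ splits. I would construct an explicit splitting using the $2n$-level structure. The idea is as follows: for an $R$-point $\zeta \in \mu_n(R)$, a $2n$-th root $\zeta'$ of $\zeta$ in $\mu_{2n}$ (chosen fppf-locally on $R$) together with the $k$-rational data of $t_2$ and $\iota_{2n}$ should produce a canonical isomorphism $\mathcal{O}(p)^{\otimes n} \cong T_{\iota(\zeta)}^* \mathcal{O}(p)^{\otimes n}$ which is independent of the choice of $\zeta'$; this isomorphism defines the desired section $\zeta \mapsto (\iota(\zeta), \psi_\zeta) \in \Theta_n$.

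The hard part will be verifying that the candidate section is genuinely a homomorphism of group $k$-schemes and is independent of the auxiliary $2n$-th root. In light of Corollary \ref{cor:cyclic}, the obstruction to splitting in the general case is encoded in the quaternion symbol $[\sigma, b)_2$ appearing there; the expected outcome of the verification is that the $\mu_{2n}$-structure furnishes a canonical square root of the relevant $\sigma$ in $k^\times$, showing $\sigma \in k^{\times 2}$ and hence $[\sigma, b)_2 = 0$. Tracking this rigorously will require a direct cocycle computation modelled on the proof of Theorem \ref{thm:antieauauel}, comparing the lifts of $\Psi_{\mathcal{O}(p)^{\otimes n}}(s)$ in $\mathbf{GL}_n(k^s)$ with compatible lifts of $\Psi_{\mathcal{O}(p)^{\otimes 2n}}(s_2)$ in $\mathbf{GL}_{2n}(k^s)$ via the squaring map $\Theta_n \to \Theta_{\mathcal{O}(p)^{\otimes 2n}}|_{E[n]}$, and showing that the error term $d_{\sigma,\tau}$ appearing there becomes a coboundary once $t_2$ is available as a $k$-point.
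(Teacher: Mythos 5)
Your preliminary reduction is correct and is a genuinely nice observation: since $t_2:=\phi_{2n}^{-1}(1,1)\in E[2n](k)$ is $k$-rational and the compatibility diagram forces $2t_2=t$, Remark \ref{rmk:deltatandn} applies to give $\Delta_t=\Delta_n$. The paper never observes this; it works with $\Delta_t$ directly. However, this reduction does not simplify the problem in any essential way -- it merely replaces the line bundle $\mathcal{L}=\mathcal{O}(t+\cdots+nt)$ by the symmetric bundle $\mathcal{O}(p)^{\otimes n}$, and the question of whether $\Theta_{\mathcal{O}(p)^{\otimes n}}\times_{E[n]}\mu_n$ splits over $k$ remains exactly as hard as the original vanishing.

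The genuine gap is that you never carry out the splitting. You identify the right target (construct a section of the restricted theta group, or equivalently show the error term $d_{\sigma,\tau}$ from Theorem \ref{thm:antieauauel} becomes a coboundary when $t_2$ is $k$-rational), but you explicitly defer it: ``the hard part will be verifying that the candidate section is genuinely a homomorphism,'' and ``tracking this rigorously will require a direct cocycle computation.'' That cocycle computation \emph{is} the content of the theorem, so what you have written is a plan, not a proof. In particular, you have not shown that the fppf-local choice of a $2n$-th root $\zeta'$ yields a well-defined $k$-scheme section, nor that the resulting $\sigma$ from Corollary \ref{cor:cyclic} actually lands in $k^{\times 2}$.

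For comparison, the paper's argument avoids constructing a splitting altogether. It builds a symplectic $\phi'_{2n}$, invokes Mumford's diagrams $(**)_n$ and $(***)_n$ to obtain the commuting ladder
\[
\Delta_t\circ {q_E}_* \;=\; 2\cdot\Delta_{2n},
\]
chooses $y\in\mathrm{H}^1(G_k,E[2n](k^s))$ lifting $x=(\phi_n)_*^{-1}(0,[b])$, and concludes
\[
\delta_s([b])\;=\;\Delta_t(x)\;=\;2\Delta_{2n}(y)\;=\;e^{2n}_*(y\cup y)\;=\;2[0,b)_{2n}\;=\;0,
\]
using the quadratic property of the obstruction map (Remark \ref{rmk:quadratic}) and \cite[Lemma 2.12]{https://doi.org/10.48550/arxiv.2106.04291}. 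This is a closed computation in cohomology that sidesteps the construction you propose. To turn your proposal into a proof you would need to either actually verify the theta-group section (with a cocycle computation comparable in length to the proof of Theorem \ref{thm:antieauauel}) or rephrase your ``compare lifts in $\mathbf{GL}_n$ and $\mathbf{GL}_{2n}$'' step in the language of the diagram (\ref{eq:bigsquare}) -- at which point you would essentially be reproducing the paper's argument.
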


\begin{proof}
The case when $n$ is odd is covered by Corollary \ref{cor:cyclic} so we may assume that $n\geq 2$ is even throughout the proof. Let $t',s'\in E[2n](k^s)$ be such that $\phi_{2n}(t')=(1,1)$ and $\phi_{2n}(s')=(0,\zeta_{2n})$. Let $d\in \mathbb{Z}$ be such that $\zeta_{2n}^{d}=e^{2n}(t',s')$. There is then a symplectic full level-$2n$ structure $\phi_{2n}'$ fitting into the commutative triangle \[\begin{tikzcd} & \mathbb{Z}/2n\mathbb{Z}\times \mu_{2n}\arrow["{(1/d)}\times \mathrm{Id}"]{d}\\
E[2n]\arrow{r}\arrow["\phi'_{2n}"]{ur} & \mathbb{Z}/2n\mathbb{Z}\times \mu_{2n}. \end{tikzcd}\] We're going to use the following diagram.
\begin{equation}\label{eq:bigsquare}\begin{tikzcd}
\mathrm{H}^1(G_k,\mathbb{Z}/2n\mathbb{Z})\times \mathrm{H}^1(G_k,\mu_{2n}(k^s))\arrow["{(\frac{1}{d}\circ q)}_* \times (p_2)_*"]{d}\arrow["{({\phi'_{2n}}^{-1}})_*"]{r} & \mathrm{H}^1(G_k,E[2n](k^s))\arrow["{q_E}_*"]{d}\arrow["\Delta_{2n}"]{r} & \mathrm{H}^2(G_k,(k^s)^\times)\arrow["\cdot 2"]{d}\\
\mathrm{H}^1(G_k,\mathbb{Z}/n\mathbb{Z})\times \mathrm{H}^1(G_k,\mu_{n}(k^s))\arrow["{(\phi_n^{-1})}_*"]{r} & \mathrm{H}^1(G_k,E[n](k^s))\arrow["\Delta_t"]{r} & \mathrm{H}^2(G_k,(k^s)^\times)
\end{tikzcd}\end{equation}
The left square (\ref{eq:bigsquare}) is commutative by assumption and the right square of (\ref{eq:bigsquare}) is commutative due to the diagram \cite[$({*}{*}{*})_n$ on p.\ 310]{MR0204427} and the fact that $\mathcal{L}^{\otimes 2}=\mathcal{O}(p)^{\otimes 2n}$.

Now fix any element $b\in k^\times$ and let $x\in \mathrm{H}^1(G_k,E[n](k^s))$ be such that $(\phi_n)_*(x)=(0,[b])$. Note that the following diagram is commutative \[\begin{tikzcd} k^\times/k^{\times 2n}\arrow[equals]{d}\arrow["{[b]\mapsto [b]}"]{r} & k^\times/ k^{\times n}\arrow[equals]{d} \\ \mathrm{H}^1(G_k, \mu_{2n}(k^s))\arrow["{(p_2)}_*"]{r} & \mathrm{H}^1(G_k,\mu_n(k^s)) \end{tikzcd}\] where the vertical arrows are the K\"ummer isomorphisms. It follows that $(p_2)_*$ is surjective and, hence, there exists an element $y\in \mathrm{H}^1(G_k,E[2n](k^s))$ such that $(\phi'_{2n})_*(y)=(0,[b])$, i.e.\ so that ${q_E}_*(y)=x$. Then \[2\cdot \Delta_{2n}(y)=\Delta_t(x)=d[0,b)_n +\delta_s([b])=\delta_s([b])\] with the second to last equality coming from Theorem \ref{thm:antieauauel}. (The factor $d$ multiplying $[0,b)_n$ is irrelevant since $[0,b)_n=0$ but, it comes from the fact that \[e^{\mathcal{L}}(t,s)=e^{\mathcal{L}}(2t',2s')=e^{2n}(2t',s')=e^{2n}(t',s')^2=\zeta_{2n}^{2d}=\zeta_n^d\] by \cite[(4) on p.\ 228]{MR0282985}.)

Finally, by Remark \ref{rmk:quadratic} and \cite[Lemma 2.12]{https://doi.org/10.48550/arxiv.2106.04291} one finds \[2\cdot \Delta_{2n}(y)=e^{2n}_*(y\cup y)= 2[0,b)_{2n}=0.\] Hence $\delta_s([b])=0$ for all $b\in k^\times$ as claimed.
\end{proof}

\subsection{Splitting cyclic algebras}\label{ss: cyclicsplit}
In this subsection, we utilize the above results to give some interesting examples of genus one curves either splitting or embedding in Severi--Brauer varieties.

\begin{theorem}\label{thm:fulllevelinf}
Let $k$ be an arbitrary field. Let $F$ be a perfect field containing an algebraic closure $\overline{k}$ of $k$. Let $E/k$ be any elliptic curve and let $A/F$ be any central simple $F$-algebra.

Then there exists a principal homogeneous space $C$ for $E_F$ splitting $A$ if and only if $A$ is Brauer equivalent to a cyclic central simple $F$-algebra.
\end{theorem}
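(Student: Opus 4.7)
The plan is to leverage the full torsion structure of $E_F$---inherited from the hypothesis $F \supset \overline{k}$---to apply the cyclicity machinery of Section \ref{sec:symbols} in both directions. Since $F$ is perfect and contains $\overline{k}$, every $n$-torsion point of $E_F$ of order coprime to $p := \mathrm{char}(F)$ is $F$-rational; in particular, for each such $n$ there is a full level-$n$ structure $\phi_n : E_F[n] \cong \mathbb{Z}/n\mathbb{Z} \times \mu_n$, compatible with a full level-$2n$ structure $\phi_{2n}$ as in the hypothesis of Theorem \ref{thm:2nstructure}. Moreover, for every $n$-torsion point $t \in E_F[n](F)$ there exists $q \in E_F[2n](F)$ with $2q = t$, so by Remark \ref{rmk:deltatandn} the obstruction maps $\Delta_t$ and $\Delta_n$ coincide on $\mathrm{H}^1(G_F, E_F[n](F^s))$. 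This puts us simultaneously in the settings of Theorems \ref{thm:antieauauel} and \ref{thm:2nstructure} and Corollary \ref{cor:cyclic}.

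For the ($\Leftarrow$) direction, I would represent a cyclic Brauer class as $[A] = [\chi, a)_n$ for some $n$ coprime to $p$. Theorem \ref{thm:2nstructure} gives the vanishing $\delta_s = 0$, so by Theorem \ref{thm:antieauauel}, selecting $\xi \in \mathrm{H}^1(G_F, E_F[n](F^s))$ with $(\phi_n)_*([\xi]) = (c^{-1}\chi, [a])$ yields $\Delta_t([\xi]) = c [c^{-1}\chi, a)_n = [A]$. Since $\Delta_t = \Delta_n$, Corollary \ref{cor: splittoo} produces the required principal homogeneous space splitting $A$. For the ($\Rightarrow$) direction, the low-degree cases $\deg(A) \leq 3$ are classical (split, quaternion, Wedderburn's cyclicity theorem for degree $3$). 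Otherwise, Proposition \ref{prop: splitandembedd} upgrades a splitting PHS $C$ to a degree-$d$ Severi--Brauer diagram $(C, \mu, f : C \to Y)$ under $E_F$ with $[Y] = [A]$ for some $d \geq 2$; Propositions \ref{prop:severiBrauerDiagramsBijections} and \ref{prop:obsforget} produce $\xi \in \mathrm{H}^1(G_F, E_F[d](F^s))$ with $\Delta_d([\xi]) = [A]$, and Corollary \ref{cor:cyclic} expresses $[A]$ as a cyclic symbol $c[\chi, b)_d$.

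The principal obstacle I anticipate is the characteristic-$p$ case: Section \ref{sec:symbols} operates throughout under the standing assumption $p \nmid n$, whereas Theorem \ref{thm:fulllevelinf} imposes no such restriction. In the ($\Leftarrow$) direction this can likely be absorbed via primary decomposition of $[A]$, treating the $p$-primary part separately using the classical cyclicity of $p$-primary Brauer classes in characteristic $p$ (Artin--Schreier--Witt theory). In the ($\Rightarrow$) direction there is an additional subtlety: the degree $d$ of the Severi--Brauer diagram supplied by Proposition \ref{prop: splitandembedd} need not be coprime to $p$, so one must either arrange coprimality of $d$ to $p$ by replacing $Y$ with a Brauer-equivalent Severi--Brauer variety of larger, prime-to-$p$, dimension (and composing $f$ with the resulting embedding), or handle the $p$-primary contribution by a dedicated argument. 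Modulo this technicality, both directions reduce cleanly to the symbol description of $\Delta_t$ established in Section \ref{sec:symbols}.
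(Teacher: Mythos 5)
Your ($\Leftarrow$) direction is sound and is in fact more explicit than the paper, whose proof of Theorem \ref{thm:fulllevelinf} records only the ($\Rightarrow$) argument and leaves the converse to be assembled from Corollary \ref{cor:cyclic}, Theorem \ref{thm:2nstructure}, and Corollary \ref{cor: splittoo} exactly as you do. Your ($\Rightarrow$) direction tracks the paper until you reach the characteristic-$p$ issue. You correctly flag the gap, but your proposed repair does not work. Replacing $Y$ by a Brauer-equivalent Severi--Brauer variety $Y'$ of larger, prime-to-$p$, dimension and composing $f$ with the twisted-linear inclusion $Y\hookrightarrow Y'$ leaves the image of $C$ a \emph{degenerate} curve of degree $d$ inside $Y'$, not a geometrically elliptic normal curve of degree $\dim Y'+1$, so this does not produce a degree-$(\dim Y'+1)$ Severi--Brauer diagram and yields no class in $\mathrm{H}^1(G_F,E[\dim Y'+1])$. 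Re-embedding via the multiplied-up Picard class, as in the proof of Proposition \ref{prop: splitandembedd}, gives degree $ed$; if $p^r\,\Vert\,d$ then $ed$ is still divisible by $p^r$, so this cannot remove the $p$-part either. The paper's mechanism is Lemma \ref{lem:replacep}: writing $n=dp^r$ with $\gcd(d,p)=1$ and choosing $e$ with $ep^r\equiv 1\pmod{\exp(A)}$, one chains Mumford's compatibility diagrams $(**)_n$ and $(***)_n$ for theta groups to produce a degree-$de$ diagram $(C',\mu',f':C'\to Z)$ with $[Z]=[A]$, and crucially the underlying principal homogeneous space $C'$ may differ from $C$. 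This genuine change of curve is what your suggested fix is missing.

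Two smaller points. First, your invocation of Artin--Schreier--Witt theory in the ($\Leftarrow$) direction is a red herring: the relevant fact, used by the paper inside Lemma \ref{lem:replacep} via the citation \cite[Theorem 1.3.7]{MR4304038}, is that a perfect field of characteristic $p>0$ has \emph{trivial} $p$-primary Brauer group, so there is no $p$-primary part of $[A]$ to split; any cyclic presentation of degree divisible by $p$ can be replaced by one of prime-to-$p$ degree by discarding the $p$-part of the character. Second, Theorem \ref{thm:2nstructure} needs a full level-$2n$ structure, which requires $2n$ coprime to $p$; when $p=2$ your coprime-to-$p$ degree $n$ is odd, so you should instead invoke the odd case of Corollary \ref{cor:cyclic} (and the odd case of Remark \ref{rmk:deltatandn} for $\Delta_t=\Delta_n$), where $\delta_s=0$ comes for free.
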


To cover the case when the characteristic of $F$ is $p>0$, we need the following lemma:

\begin{lemma}\label{lem:replacep}
Suppose that $F$ is a perfect field of characteristic $p>0$. Let $E$ be an elliptic curve over $F$. Let $A$ be a central simple $F$-algebra with associated Severi--Brauer variety $X=\mathrm{SB}(A)$. 

Then there exists a principal homogeneous space $C$ for $E$ which splits $A$ if and only if either of the following conditions hold:
\begin{enumerate}
\item $A$ is split, i.e.\ $A\cong M_n(F)$ and $X\cong \mathbb{P}^{n-1}$ for some $n\geq 1$;
\item or there exists a principal homogeneous space $C'$ for $E$, an integer $m$ coprime to $p$, and a degree-$m$ Severi--Brauer diagram $(C',\mu',f':C'\rightarrow Z)$ under $E$ with $Z$ Brauer equivalent to $X$.
\end{enumerate}
\end{lemma}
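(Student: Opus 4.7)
The backward direction ($\Leftarrow$) is direct: in case (1) any PHS, e.g.\ $C = E$, splits $A$; in case (2) the morphism $f'$ shows $C'$ splits $Z$, and hence $A$, since $[Z] = [X]$ in $\mathrm{Br}(F)$.

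For the forward direction, suppose $C$ is a PHS splitting $A$ and $A$ is not split. The argument of Proposition \ref{prop: splitandembedd} (more precisely, the proof of (A1)$\Rightarrow$(A3), which does not use $\dim X \geq 3$) produces a Severi--Brauer diagram $(C, \mu, f: C \to Y)$ of some degree $d \geq 2$ with $[Y] = [X]$. Write $d = p^a m$ with $\gcd(m, p) = 1$. If $a = 0$, condition (2) holds with $C' = C$ and $Z = Y$, so we may assume $a \geq 1$.

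The plan is to extract the ``prime-to-$p$ part'' of the diagram. By Proposition \ref{prop:severiBrauerDiagramsBijections} the diagram corresponds to a class $\xi \in \mathrm{H}^1(G_F, E[d](F^s))$. Since $\gcd(p^a, m) = 1$, the group scheme $E[d]$ splits as $E[p^a] \oplus E[m]$ over $F$, giving $\xi = \xi_p + \xi_m$. The quadratic property from Remark \ref{rmk:quadratic}, together with the vanishing of the induced Weil pairing $E[p^a] \otimes E[m] \to \mu_d$ (which factors through $\mu_{\gcd(p^a, m)}$, hence is trivial), yields $\Delta_d(\xi) = \Delta_d(\xi_p) + \Delta_d(\xi_m)$. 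The two summands are respectively $p$-primary and prime-to-$p$, so they are the two isotypic components of $[X]$. The class $\xi_m$ then corresponds via Proposition \ref{prop:severiBrauerDiagramsBijections} to a degree-$m$ Severi--Brauer diagram $(C', \mu', f': C' \to Z)$, and after accounting for the compatibility of $\Delta_m$ with $\Delta_d$ along the inclusion $E[m] \hookrightarrow E[d]$ (via $\mathcal{L}_d = \mathcal{L}_m^{\otimes p^a}$) one finds $[Z]$ equals the prime-to-$p$ part of $[X]$.

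The remaining and main obstacle is to show $\Delta_d(\xi_p) = 0$, i.e., that $[X]$ has trivial $p$-primary part, so that $[Z] = [X]$ and condition (2) follows. Since $C$ splits $[X]$, it also splits the $p$-primary part $[X]_p$, so it suffices to show that the relative Brauer group $\mathrm{Br}(C/F)$ contains no nonzero $p$-primary elements when $F$ is perfect of characteristic $p$. I would attempt this by first invoking Tsen's theorem (the smooth projective curve $C_{F^s}$ lives over an algebraically closed field and has trivial Brauer group) and then using the Leray spectral sequence for $C \to \mathrm{Spec}(F)$ to identify $\mathrm{Br}(C/F)$ with the image of $\mathbf{Pic}_C(F) \to \mathrm{Br}(F)$; one then analyzes this image using the extension structure of $\mathbf{Pic}_C(F)$ by $E(F)$ and $\pi_C\mathbb{Z}$, together with perfectness of $F$, to conclude the $p$-primary part vanishes. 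This last step is the most delicate part, and if the direct analysis is insufficient a backup plan is to use the Frobenius isogeny $F_E: E \to E^{(p)} \cong E$ (available since $F$ is perfect) and Verschiebung to replace $(C,\mu,f)$ by a Severi--Brauer diagram for a PHS of period coprime to $p$ directly.
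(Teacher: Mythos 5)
Your overall strategy — decompose the cocycle $\xi$ into its $p$-primary piece $\xi_p$ and prime-to-$p$ piece $\xi_m$ via $E[d]\cong E[p^a]\times E[m]$, and use the quadratic property plus triviality of the cross Weil pairing to split $\Delta_d(\xi)=\Delta_d(\xi_p)+\Delta_d(\xi_m)$ — is a genuinely different route from the paper's. The paper does not decompose $\xi$: it keeps the original class, chooses $e$ with $ep^r\equiv 1\pmod{\exp(A)}$ (possible precisely because $\exp(A)$ is coprime to $p$), and transports the class through the commuting ladder furnished by Mumford's $(**)_n$ and $(***)_n$ diagrams to land in $\mathrm{H}^1(G_F,E[de](F^s))$ with $de$ coprime to $p$ and $\Delta_{de}$ of the new class exactly $[A]$.

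There are two gaps in your version. The more serious one is at the end: the compatibility of $\Delta_m$ with $\Delta_d$ along $E[m]\hookrightarrow E[d]$ (your ``$\mathcal{L}_d=\mathcal{L}_m^{\otimes p^a}$'') gives $\Delta_d(\iota_*\xi'_m)=p^a\,\Delta_m(\xi'_m)$, not $\Delta_d(\iota_*\xi'_m)=\Delta_m(\xi'_m)$. So even after you know $\Delta_d(\xi_p)=0$, you only obtain $p^a[Z]=[X]$, where $[Z]=\Delta_m(\xi'_m)$ is the Brauer class of the degree-$m$ diagram. Since $p^a$ is a unit modulo $\exp(A)$, $[Z]$ is some prime-to-$p$ power of $[X]$ but need not equal $[X]$: e.g.\ $p=2$, $a=1$, $\exp(A)=3$ gives $[Z]=2[X]\ne[X]$. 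You would still need to re-scale (choose $e$ with $ep^a\equiv1\pmod{\exp(A)}$, $\gcd(e,p)=1$, and pass to a degree-$me$ diagram), which is precisely the correction the paper builds in from the outset.

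The second gap is in the argument that $\Delta_d(\xi_p)=0$. Your plan — Tsen, the Leray spectral sequence for $C\to\mathrm{Spec}(F)$, analysis of $\mathbf{Pic}_C(F)\to\mathrm{Br}(F)$, with a hedged ``backup plan'' via Frobenius — is both more elaborate and less complete than what is needed. The relevant fact is simply that a perfect field of characteristic $p>0$ has no nontrivial central simple algebras of $p$-primary index; the paper invokes exactly this (with a reference) and deduces immediately that $\exp(A)$ is coprime to $p$. In your setup that same fact directly kills $\Delta_d(\xi_p)$, because you already observed it is $p$-primary. So the spectral-sequence detour is unnecessary once you have (and you should just cite) that vanishing of $p$-torsion in $\mathrm{Br}(F)$.
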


\begin{proof}
One direction is obvious. Also, if $A$ is split then $E$ clearly splits $A$. Suppose then that $C$ splits $A$ and that $A$ is a nonsplit central simple $F$-algebra. By the proof that \ref{prop: ds} implies \ref{prop: dsbd} in Proposition \ref{prop: splitandembedd} (which did not depend on the hypothesis $\mathrm{dim}(X)\geq 3$) there exists an integer $n\geq 2$ and a degree-$n$ Severi--Brauer diagram $\xi=(C,\mu,f:C\rightarrow Y)$ for a Severi--Brauer variety $Y$ that's Brauer equivalent to $X$. Since $F$ is perfect, there are no nontrivial central simple $F$-algebras of $p$-primary index \cite[Theorem 1.3.7]{MR4304038}. We can therefore write $n=dp^r$ where $d,r\in \mathbb{Z}$ are such that $\mathrm{gcd}(d,p)=1$ and $d$ is a multiple of the index of $A$. Let $[\xi]\in \mathrm{H}^1(G_F,E[n](F^s))$ be the equivalence class of $\xi$ under the bijection of Proposition \ref{prop:severiBrauerDiagramsBijections}.

Let $e\geq 1$ be an integer such that $ep^r \equiv 1 \pmod{\mathrm{exp}(A)}$. We will use the commutative diagram below.
\begin{equation}\label{eq:mumf}\begin{tikzcd}
\mathrm{H}^1(G_F,E[n](F^s))\arrow["\Delta_n"]{r}\arrow["\epsilon"]{d} & \mathrm{H}^2(G_F,(F^s)^\times)\arrow["\cdot e"]{d}\\
\mathrm{H}^1(G_F,E[ne](F^s))\arrow["\Delta_{ne}"]{r}\arrow["\eta"]{d} & \mathrm{H}^2(G_F,(F^s)^\times)\arrow["\cdot p^r"]{d}\\
\mathrm{H}^1(G_F,E[de](F^s))\arrow["\Delta_{de}"]{r} & \mathrm{H}^2(G_F,(F^s)^\times)
\end{tikzcd}\end{equation}
The top square of (\ref{eq:mumf}) results from the diagram \cite[$(**)_n$ on p.\ 309]{MR0204427} and the bottom square of (\ref{eq:mumf}) from the diagram \cite[$({*}{*}{*})_n$ on p.\ 310]{MR0204427}. Note that in \cite{MR0204427} Mumford assumes that the characteristic of the base field is coprime to the degree of the torsion subgroup of $E$ only in order to define the associated Theta group; by the characteristic free work of \cite[\S23]{MR0282985}, this assumption can safely be removed.

We get from (\ref{eq:mumf}) the equalities \[[A]=e p^r\cdot[A]=ep^r\cdot\Delta_n([\xi])=\Delta_{de}(\eta(\epsilon([\xi])))\] inside of $\mathrm{H}^2(G_F,(F^s)^\times)$. Hence by Proposition \ref{prop:obsforget} there exists a curve $C'$ with Jacobian $\mathbf{Pic}_{C'}^0=E$ and an equivalence class of a degree-$de$ Severi--Brauer diagram $\eta(\epsilon([\xi]))=[(C',\mu',f':C'\rightarrow Z)]$ for $E$ as claimed. 
\end{proof}

\begin{proof}[Proof of Theorem \ref{thm:fulllevelinf}]
Suppose that $C$ is a principal homogeneous space for $E_F$ splitting the central simple $F$-algebra $A$. Then by Proposition \ref{prop: splitandembedd}, there exists a degree-$n$ Severi--Brauer diagram $\xi=(C,\mu,f:C\rightarrow Y)$ under $E_F$ for a Severi--Brauer variety $Y$ Brauer equivalent to $X=\mathrm{SB}(A)$. By Lemma \ref{lem:replacep}, we may assume that $n$ is coprime to the characteristic of $F$ so long as we possibly also replace the Severi--Brauer diagram. Hence, we may assume that $\xi$ defines a class $[\xi]\in \mathrm{H}^1(G_F,E_F[n](F^s))$.

Since $E$ is defined over $k$, and since $F$ contains an algebraic closure $\overline{k}$ of $k$, we have that $E[n](\overline{k})=E[n](F)$. In particular, there is a full level-$n$ structure $\phi_n:E[n]\cong \mathbb{Z}/n\mathbb{Z}\times \mu_n$. Corollary \ref{cor:cyclic} then implies that $A$ is Brauer equivalent to a degree-$n$ cyclic algebra.
\end{proof}

\begin{example}\label{exmp: wadsworth}
Using Theorem \ref{thm:fulllevelinf}, we can now give examples of elliptic curves $E$ and of algebras $A$ such that no principal homogeneous space for $E$ splits $A$. Let $k$ be any field of characteristic zero, and let $L$ be any field containing an algebraic closure $\overline{k}$ of $k$. Let $F=L((t_1))((t_2))((t_3))((t_4))$ and let $A/F$ be the tensor product degree-$n$ symbol algebras $(t_1,t_2)_n\otimes_F(t_3,t_4)_n$ for any primitive $n$th root of unity $\zeta_n\in F$. Then, as we show below, $A$ is not split by any cyclic extension of $F$ and, so, $A$ is not Brauer equivalent to any cyclic algebra over $F$. Therefore, for any elliptic curve $E/k$ it follows that there are no principal homogeneous spaces for $E_F/F$ which split $A/F$. Note also that one can choose $L/k$ so that $E_F/F$ has infinitely many nontrivial principal homogeneous spaces of period $n^2$.

It remains to prove that $A$ is not split by any cyclic extension of $F$. The following proof was explained to the authors by Adrian Wadsworth, for which we are very grateful. Any errors are now our own. First, note that it suffices to prove the analogous claim when $L$ is algebraically closed. The field $F$ can be equipped with the iterated composite valuation $v_F:F^\times\rightarrow \mathbb{Z}^{\oplus 4}$ defined so that \[v_F(t_1^{m_1}t_2^{m_2}t_3^{m_3} t_4^{m_4})=(m_1,m_2,m_3,m_4)\quad \mbox{for all } m_1,m_2,m_3,m_4\in \mathbb{Z}.\] Here the value group $\Gamma_F=\mathbb{Z}^{\oplus 4}$ is given the reverse (right-to-left) lexicographic ordering. The valuation $v_F$ is Henselian \cite[Example A.16]{MR3328410} and so it extends to a unique valuation $v_A$ on the division algebra $A$ by \cite[Corollary 1.7]{MR3328410}. As $L$ is assumed algebraically closed, the algebra $A$ is totally ramified over $F$ with value group $\Gamma_A=\left(\frac{1}{n}\mathbb{Z}\right)^{\oplus 4}\supset \Gamma_F$. One can also define a canonical nondegenerate alternating pairing \[\Phi_A:(\Gamma_A/\Gamma_F)\times (\Gamma_A/\Gamma_F)\rightarrow \mu_n(L),\] by taking commutators of lifts of the corresponding elements in $A$, which determines $A$ up to isomorphism.

Any finite field extension $K/F$ inherits a unique valuation $v_K$ extending $v_F$ as well and, if $K/F$ is Galois, then $\mathrm{Gal}(K/F)\cong \Gamma_K/\Gamma_F$ since $K$ is totally ramified over $F$. Now the algebra $B:=A\otimes_F K$ is split if and only if $((\Gamma_A\cap \Gamma_K)/\Gamma_F)^{\perp}$ is contained in $\Gamma_K/\Gamma_F$ by \cite[Corollary 7.82]{MR3328410}. Here the intersection $\Gamma_A\cap \Gamma_K$ is taken inside the divisible hull of $\Gamma_F$ and the orthogonal is taken with respect to $\Phi_A$.

If $K$ is cyclic, then $\Gamma_K/\Gamma_F$ is a finite cyclic group. It follows that $(\Gamma_A\cap \Gamma_K\subset \Gamma_K)/\Gamma_F$ must be cyclic too. Hence $((\Gamma_A\cap \Gamma_K)/\Gamma_F)^\perp$ has order greater or equal $n^3$ but exponent $n$. In other words, $((\Gamma_A\cap \Gamma_K)/\Gamma_F)^\perp$ is contained in no cyclic group and so $B$ is not split.
\end{example}

In Remark \ref{rmk:nonellnorm} we gave examples of smooth genus one curves embedded inside Severi--Brauer varieties which could not be realized as geometrically elliptic normal curve inside the same Severi--Brauer variety. Here we give an example of an elliptic curve which appears as the Jacobian of a smooth genus one curve embedded inside a Severi--Brauer variety which does not admit any principal homogeneous spaces embedding as a geometrically elliptic normal curve into this same Severi--Brauer variety.

\begin{theorem}\label{thm:embed8}
Let $k=\mathbb{Q}_2$ and $F=\mathbb{Q}_2((t))$. Let $A$ be a division algebra with degree $\mathrm{deg}(A)=4$ over $k$. Let $B$ be the division $F$-algebra underlying the tensor product $A_F\otimes_F (-1,t)_2$. Then there exists an elliptic curve $E$ over $F$ such that \[[B]\notin \mathrm{Im}\left(\Delta_4:\mathrm{H}^1(G_F,E[4](F^s)\right)\rightarrow \mathrm{H}^2(G_F,(F^s)^\times)\quad\] and \[\quad[B]\in \mathrm{Im}\left(\Delta_8:\mathrm{H}^1(G_F,E[8](F^s))\rightarrow \mathrm{H}^2(G_F,(F^s)^\times)\right)\] where we write $G_F=\mathrm{Gal}(F^s/F)$ is the absolute Galois group of $F$.

In other words, there does not exist a principal homogeneous space for $E$ of period dividing $4$ which admits an embedding in $X=\mathrm{SB}(B)$ as a curve of degree $4$ but, there does exist a principal homogeneous space for $E$ of period dividing $8$ which embeds in $X$ as a curve of degree $8$.
\end{theorem}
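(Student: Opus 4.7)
The plan is to take $E$ to be an elliptic curve over $F=\mathbb{Q}_2((t))$ admitting full level-$8$ structure $\phi_8: E[8]\cong \mathbb{Z}/8\mathbb{Z}\times\mu_8$, e.g.\ obtained by specializing the universal elliptic curve with full level-$8$ structure over the modular curve $Y(8)$ (passing first to the finite extension $F(\zeta_8)=\mathbb{Q}_2(\zeta_8)((t))$ if necessary and then exploiting the compatibility of $\Delta_n$ with restrictions from Proposition \ref{prop:severiBrauerDiagramsBijections}). The full level-$8$ structure $\phi_8$ automatically induces a full level-$4$ structure $\phi_4$ by composing with multiplication by $2$. Everything below is then driven by the dichotomy: at the level of $4$-torsion the full level-$8$ structure forces $\Delta_4$ to land in cyclic algebras, while at the level of $8$-torsion the map $\Delta_8$ can still hit noncyclic classes like $[B]$ that are split by a cyclic degree-$8$ extension.

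For the first assertion, $[B]\notin \mathrm{Im}(\Delta_4)$, I would apply Theorem \ref{thm:2nstructure} with $n=4$: the presence of the chosen level-$8$ lift of $\phi_4$ is precisely the hypothesis, and it gives $\delta_s([b])=0$ for every $[b]\in F^\times/F^{\times 4}$. Feeding this into Theorem \ref{thm:antieauauel} shows that for every $[\xi]\in H^1(G_F, E[4](F^s))$ with $(\phi_4)_*([\xi])=(\chi,[b])$ we have $\Delta_4([\xi])=c\,[\chi,b)_4$ for a fixed integer $c$ coprime to $4$. Thus the image of $\Delta_4$ consists only of classes of cyclic algebras of degree dividing $4$. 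Since $B$ is, by Brussel's construction, a noncyclic division algebra of degree $4$ over $F$, the class $[B]$ cannot lie in this image.

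For the second assertion, $[B]\in \mathrm{Im}(\Delta_8)$, I would use the other half of Brussel's result: $B$ admits a degree-$8$ cyclic splitting field $L/F$, so identifying $L$ with a character $\psi:G_F\to \mathbb{Z}/8\mathbb{Z}$ of order $8$ one has $[B]=[\psi,a)_8$ in $\mathrm{Br}(F)$ for a suitable $a\in F^\times$. Now apply Corollary \ref{cor:cyclic} in the even case with $n=8$ to the map $\Delta_t$ associated to $t=\phi_8^{-1}(1,1)\in E[8](F)$: every class of the form $[c\chi+4\sigma_b, b)_8$ lies in $\mathrm{Im}(\Delta_t)$. To hit $[\psi,a)_8$ it suffices to take $[b]=[a]$ and $\chi_\xi = c^{-1}(\psi-4\sigma_a)\in H^1(G_F,\mathbb{Z}/8\mathbb{Z})$, where $\delta_s([a])=[\sigma_a,a)_2$ and $c^{-1}$ is the inverse of $c$ modulo $8$; then $\Delta_t([\xi])=[B]$ for the class $[\xi]$ with $(\phi_8)_*([\xi])=(\chi_\xi,[a])$. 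Corollary \ref{cor: splittoo} then transfers the conclusion from $\Delta_t=\Delta_{\mathcal{L}}$ back to $\Delta_8$.

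The main obstacle will be making Step 1 honest: explicitly exhibiting an elliptic curve over $\mathbb{Q}_2((t))$ carrying the necessary full level-$8$ structure (equivalently, ensuring compatibility with the restriction from $F$ to $F(\zeta_8)$, since $\mu_8\not\subset \mathbb{Q}_2$), and checking that the $\Delta$-obstructions for the chosen $E$ agree, after restriction, with those computed via the symbol description. A secondary technical point is unpacking Brussel's construction to write $[B]=[\psi,a)_8$ in the explicit cocyclic form needed, and verifying the noncyclicity in degree $4$ at the level of symbols rather than just at the level of Brauer classes; the computations in Subsection \ref{ss:cyclicalg} together with Lemma \ref{lem:algebra} should make this purely formal once Brussel's splitting field is in hand.
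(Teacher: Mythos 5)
Your core strategy matches the paper's: exploit the dichotomy that $\Delta_4$ has image contained in cyclic algebras of degree dividing $4$ (so Brussel's noncyclic $B$ is excluded), while $B$ nonetheless has a degree-$8$ cyclic splitting field, making $[B]$ a degree-$8$ symbol and hence realizable by $\Delta_8$ via Corollary \ref{cor: splittoo}. But there is a genuine gap in your treatment of the ``main obstacle'' you identify, and it is not as innocuous as you seem to hope.

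The fallback you propose -- ``passing first to the finite extension $F(\zeta_8)=\mathbb{Q}_2(\zeta_8)((t))$ if necessary'' -- does not work. First, the theorem asserts the existence of $E$ \emph{over $F$}, so proving a statement over $F(\zeta_8)$ is simply a weaker claim. Second, and fatally, Brussel's noncyclicity result is specific to $\mathbb{Q}_2((t))$: it is entirely possible that $B\otimes_F F(\zeta_8)$ becomes Brauer equivalent to a cyclic degree-$4$ algebra, in which case the restriction argument gives you nothing. Compatibility of $\Delta_n$ with restriction (which goes the ``wrong way'' -- it lets you push forward, not pull back) cannot rescue this. The resolution is to notice that the paper's notion of full level-$n$ structure is an isomorphism of \emph{group schemes} $E[n]\cong\mathbb{Z}/n\mathbb{Z}\times\mu_n$, which does \emph{not} require $\mu_n$ to be split over the base. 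The modular curve $X(8)$ (with this $\mathbb{Z}/n\mathbb{Z}\times\mu_n$ moduli convention) is defined over $\mathbb{Q}$ and has a $\mathbb{Q}$-rational cusp; completing at this cusp over $\mathbb{Q}_2$ yields $\mathbb{Q}_2((t))\cong F$, and the associated moduli point gives the curve $E/F$ with full level-$8$ structure directly, no cyclotomic extension needed.

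Two smaller points. You apply Theorem \ref{thm:2nstructure} to kill $\delta_s$ and then read off from Theorem \ref{thm:antieauauel} that the image is cyclic; this is fine but unnecessary, since Corollary \ref{cor:cyclic} already shows $\Delta_t$ has cyclic image whether or not $\delta_s$ vanishes, and that is all the noncyclicity argument requires. More substantively, in the first assertion you conflate $\Delta_t$ with $\Delta_4$: Theorems \ref{thm:antieauauel} and \ref{thm:2nstructure} compute $\Delta_t$ (the obstruction map for $\mathcal{L}=\mathcal{O}(t+2t+3t+4t)$), but the theorem concerns $\Delta_4$ (the obstruction map for $\mathcal{O}(p)^{\otimes 4}$). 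Corollary \ref{cor: splittoo} gives only $\mathrm{Im}(\Delta_t)\subset\mathrm{Im}(\Delta_4)$, which is the wrong inclusion for excluding $[B]$ from $\mathrm{Im}(\Delta_4)$; you need the equality $\Delta_t=\Delta_4$, which follows from Remark \ref{rmk:deltatandn} using the existence of an $F$-rational point $q\in E[8](F)$ with $2q=t$ (available since $q=\phi_8^{-1}(1,1)$ is $F$-rational). Your argument for the second assertion is essentially the paper's.
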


\begin{proof}
We first consider the (smooth and projective) modular curve $X(8)$ defined over $\mathbb{Q}$. The modular curve $X(8)$ is a compactification of the smooth affine curve $Y(8)$ which is the fine moduli space parametrizing pairs $(E,\phi_8)$ of elliptic curves $E$ with a full symplectic level-$8$ structure $\phi_8$. 

It's known that there is at least one $\mathbb{Q}$-rational point in the complement $X(8)\setminus Y(8)$, i.e.\ a rational cusp. Hence $X(8)_k$ contains at least one smooth rational $k$-point $p$. If we write $K=k(X(8)_k)$ for the function field of $X(8)_k$, then it follows that this smooth $k$-point $p$ defines a valuation $v$ on $K$ for which the completion $K_v$ is isomorphic with $F$. We take the elliptic curve $E$ to be the elliptic curve in the pair $(E,\phi_8)$ associated to this $F$-point.

The algebra $B$ has been considered by Brussel in \cite{MR4072790}. There it's shown that $B$ is a noncyclic $F$-algebra of degree $4$, \cite[Theorem 3.2]{MR4072790}, and that $B$ has a cyclic splitting field of degree $8$, \cite[Remark 3.3]{MR4072790}. Now, by Remark \ref{rmk:deltatandn}, we have $\Delta_4=\Delta_t$ for any point $t\in E[4](F)$ of exact order 4. By Theorem \ref{thm:antieauauel} and Corollary \ref{cor:cyclic}, we have $[B]\notin \mathrm{Im}(\Delta_4)$ since $B$ is not itself cyclic. 

On the other hand, if $t'\in E[8](F)$ is a point of exact order $8$, then $[B]\in \mathrm{Im}(\Delta_{t'})$ because any degree-$8$ Galois symbol can be realized in the expression of Corollary \ref{cor:cyclic} and $M_2(B)$ is cyclic. By Corollary \ref{cor: splittoo}, it then follows that $[B]\in \mathrm{Im}(\Delta_8)$ as claimed.
\end{proof}

\begin{remark}
The embedding provided by Theorem \ref{thm:embed8} has the smallest non-minimal degree possible among all embeddings of genus one curves into $X$. Indeed, if a genus one curve embeds in $X$, then it also embeds as a geometrically elliptic normal curve into some Severi--Brauer variety $Y$ that's Brauer equivalent to $X$. Hence the degree of $C$ is some multiple of the index of $X$.
\end{remark}

\begin{remark}\label{rmk: embedpadic}
The construction of Theorem \ref{thm:embed8} is minimal in the sense that no such examples exist over any $p$-adic number field, so the completion with respect to the transcendental variable $t$ is needed. In fact, for any elliptic curve $E$ over any $p$-adic number field $k$, if $E$ admits a principal homogeneous space splitting a Severi--Brauer variety $X$ of index $d\geq 3$ over $k$, then $E$ admits a principal homogeneous space which embeds as a geometrically elliptic normal curve into every Severi--Brauer variety Brauer equivalent to $X$. 

Indeed, by Roquette's theorem \cite[Theorem 1]{MR0201435} a Severi--Brauer variety $X$ over such a $p$-adic field $k$ is split by a genus one curve $C$ if and only if the index of $X$ divides the index of $C$. Suppose then that $E$ is an elliptic curve admitting a principal homogeneous space $C$ which splits a Severi--Brauer variety $X$ of index $d\geq 3$. It follows that the index of $C$ is divisible by $d$. By \cite[Theorem 7]{MR0237506}, the period and index of $C$ are equal so that there exists a principal homogeneous space $C'$ for $E$ of period and index equal to $d$. Again by Roquette's theorem we know that $C'$ splits $X$ and hence (cf.\ Proposition \ref{prop: splitandembedd}) there exists a Severi--Brauer variety $Y$ which is Brauer equivalent to $X$ and of dimension $m-1$ for some $m$ divisible by $d$ such that $C'$ embeds in $Y$ of degree $m$.

Let $p\in \mathrm{Pic}^m_{C'}(k)$ be the point corresponding to the above embedding. Since $C'$ has index $d$, we can also consider a point $q\in \mathrm{Pic}_{C'}^d(k)$ corresponding to a line bundle associated to a Weil divisor of degree $d$ on $C'$. Let $m=de$ for some $e\geq 1$. If $e=1$, then already $C'$ can be realized as a geometrically elliptic normal curve inside every Severi--Brauer variety Brauer equivalent to $X$ by taking the embeddings corresponding to the points $p+rq$ varying over all integers $r\geq 0$. If $e>1$, then consider the point $t=p-(e-1)q\in \mathrm{Pic}^d_{C'}(k)$. Varying over integers $r\geq 0$, the points $t+rq$ also allow one to realize $C'$ embedded as a geometrically elliptic normal curve inside any Severi--Brauer variety Brauer equivalent to $X$, proving the claim.
\end{remark}

\begin{remark}\label{rmk:qpnstructure}
In \cite{https://doi.org/10.48550/arxiv.2106.04291}, Antieau and Auel show that over a global field base one may split central simple algebras of certain small degrees if one is willing to enlarge the base by various subfields of cyclotomic field extensions of the base. A simpler argument shows that, very similarly, one may split cyclic algebras over fields containing a $p$-adic field if one is willing to possibly adjoin certain roots of unity to the base field.

To illustrate this, let $n>1$ be any integer, let $p$ be a prime not dividing $n$, and let $E$ be an elliptic curve defined over the finite field $\mathbb{F}_p$. Then there is an extension $\mathbb{F}_q$ of $\mathbb{F}_p$ such that $E[n](\mathbb{F}_q)=E[n](\overline{\mathbb{F}}_p)$. More precisely, if $n=\prod_i p_i^{r_i}$ is a prime factorization of $n$, then one can take $q=p^r$ for some $r$ such that \[ r\,\,\bigg\vert \prod_i (p_i^{r_i-1})^4(p_i^2-1)(p_i^2-p_i)=\#\mathrm{GL}_2(\mathbb{Z}/n\mathbb{Z}).\] If $k=\mathbb{Q}_p(\zeta_{p^r-1})$ for a primitive $(p^r-1)$th root of unity $\zeta_{p^r-1}$, then for any field $F/k$, every cyclic central simple $F$-algebra of degree $n$ contains a smooth genus one curve.

Indeed, the modular curve $X(n)$, which is defined over $\mathrm{Spec}(\mathbb{Z}[1/n])$, contains an $\mathbb{F}_q$-point corresponding to the elliptic curve $E$ above. By Hensel's lemma, this implies $X(n)$ also contains a $k$-point corresponding to an elliptic curve $E'$ over $k$ with full level-$n$ structure. Hence, by Corollary \ref{cor:cyclic}, any cyclic degree-$n$ central simple algebra can be split over any field extension $F$ of $k$.

The above procedure will essentially always give a very crude bound, and such a large extension of the base won't be necessary to get the same effect. For example, if $p$ is large relative to $n$ then the modular curve $X(n)$ will already have an $\mathbb{F}_p$-point corresponding to an elliptic curve $E$ will full level-$n$ structure due to the Hasse-Weil bound (one notes that the number of cusps on $X(n)_{\mathbb{F}_p}$ is bounded independently of the prime $p$, so for large enough $p$ there must be a noncuspidal $\mathbb{F}_p$-point). Hensel's lemma then implies that there is an elliptic curve $E'$ with full level-$n$ structure defined over $\mathbb{Q}_p$ so long as $p$ is sufficiently large. Hence any cyclic degree-$n$ central simple algebra over any field containing such a $p$-adic local field will automatically contain a smooth curve of genus one by Corollary \ref{cor:cyclic}.
\end{remark}

\section{Conjugation}\label{sec: conj}
Let $E$ be an elliptic curve defined over a base field $k$. If all of the $n$-torsion points of $E$ are defined over $k$, for an integer $n\geq 2$ indivisible by the characteristic of the base field, then the absolute Galois group $G_{k}$ acts trivially on $E[n]$ and there is an isomorphism $\mathrm{H}^1(G_{k},E[n](k^s))\cong (k^{\times}/k^{\times n})^{\oplus 2}$.
  That is to say, combined with Corollary \ref{corollary:imageOfSplitting}, we understand degree-$n$ Severi--Brauer diagrams over $E$ in this case.
  However, this action is almost never trivial in practice.
  To deal with the scenario where the absolute Galois group does not act trivially, we consider degree-$n$ Severi--Brauer diagrams for $E$ over the field $F=k(E[n])$ where $E$ acquires all of its $n$-torsion points.
  To relate these diagrams to Severi-Brauer diagrams of $E$ over $k$, we consider the restriction map $\operatorname{res}:\mathrm{H}^{1}(G_{k},E[n](k^s))\to \mathrm{H}^{1}(G_{F},E[n](F^s))^{\mathrm{Gal}(F/k)}$.

\subsection{The inflation-restriction sequence}
  The following proposition describes the image of the restriction map in terms of certain Galois $(\mathbb{Z}/n\mathbb{Z}\times \mathbb{Z}/n\mathbb{Z})$-algebras over $F$.

  This proposition is useful in two ways.
  First, if we are in a setting where the restriction map is an injection (see Remark \ref{remark:casesWhereRestrictionInjective}) then it allows us to characterizes degree-$n$ Severi--Brauer diagrams of $E$ over $k$ after base-changing to $F=k(E[n])$.
  Second, the proposition shows that if a field $k$ does not admit Galois extensions of a certain shape then there cannot exist non-split Severi--Brauer diagrams.  
  
  \begin{proposition}
    \label{proposition:describingH1InTermsOfGaloisExtensions}
    Fix an integer $n\geq 3$ which is not divisible by the characteristic of $k$. Set $F=k(E[n])$.
    The image of the restriction map $$res_k^F:\mathrm{H}^1(G_{k},E[n](k^{s}))\rightarrow \mathrm{H}^1(G_{F},E[n](F^{s}))^{\mathrm{Gal}(F/k)}$$ is in bijection with the set of isomorphism classes of Galois $(\mathbb{Z}/n\mathbb{Z}\times \mathbb{Z}/n\mathbb{Z})$-algebras over $F$ which are naturally over $k$ also Galois $(\mathbb{Z}/n\mathbb{Z}\times \mathbb{Z}/n\mathbb{Z})\rtimes \mathrm{Gal}(F/k)$-algebras where $\mathrm{Gal}(F/k)$ acts on $\mathbb{Z}/n\mathbb{Z}\times \mathbb{Z}/n\mathbb{Z}$ by an isomorphism $\mathbb{Z}/n\mathbb{Z}\times \mathbb{Z}/n\mathbb{Z}\cong E[n](F)$ of $\mathrm{Gal}(F/k)$-modules.
  \end{proposition}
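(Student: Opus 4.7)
The proof has two ingredients: the inflation-restriction sequence in degree one, and the standard bijection between isomorphism classes of Galois $H$-algebras over $k$ (for a finite group $H$) and $H$-conjugacy classes of continuous homomorphisms $G_k\to H$. Write $A=E[n](k^s)=E[n](F)$ (the equality because $F=k(E[n])$) and $G=\mathrm{Gal}(F/k)$. Since $G_F$ acts trivially on $A$, the group $\mathrm{H}^1(G_F,E[n](F^s))$ is just $\mathrm{Hom}_{\mathrm{cts}}(G_F,A)$, and this in turn classifies isomorphism classes of Galois $A$-algebras over $F$. The strategy is to realize both sides of the claimed bijection as the same set of lifts $G_k\to A\rtimes G$ of the canonical surjection $G_k\twoheadrightarrow G$, taken modulo conjugation by $A$.

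The heart of the argument is the following construction. A continuous cocycle $\xi\colon G_k\to A$ representing a class in $\mathrm{H}^1(G_k,E[n](k^s))$ produces a continuous homomorphism
\[
\phi_\xi\colon G_k\longrightarrow A\rtimes G,\qquad \sigma\longmapsto (\xi_\sigma,\bar\sigma),
\]
where $\bar\sigma\in G$ denotes the image of $\sigma$. The cocycle identity $\xi_{\sigma\tau}=\xi_\sigma+\bar\sigma(\xi_\tau)$ is exactly the homomorphism condition under the semidirect product multiplication. By construction $\phi_\xi$ lifts the surjection $G_k\twoheadrightarrow G$, so the associated Galois $A\rtimes G$-algebra over $k$ has $A$-invariant subring canonically identified with $F$, making it also a Galois $A$-algebra over $F$. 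Restricting $\phi_\xi$ to $G_F$ yields the homomorphism $\xi|_{G_F}\colon G_F\to A$, which represents $res_k^F([\xi])$ in $\mathrm{H}^1(G_F,A)$. Conversely, every Galois $A\rtimes G$-algebra over $k$ whose $A$-invariants identify with $F$ arises from a continuous homomorphism $\phi\colon G_k\to A\rtimes G$ lifting $G_k\twoheadrightarrow G$; writing $\phi(\sigma)=(\xi_\sigma,\bar\sigma)$ the multiplication law in $A\rtimes G$ forces $\xi$ to be a cocycle, producing a preimage under $res_k^F$.

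To complete the argument I would verify that the two equivalence relations match: $\xi,\xi'$ are cohomologous in $\mathrm{H}^1(G_k,A)$ precisely when $\phi_\xi,\phi_{\xi'}$ are conjugate by an element of $A\subset A\rtimes G$. A direct unwinding of the semidirect product shows that conjugation by $(a,1)$ sends $(\xi_\sigma,\bar\sigma)$ to $(\xi_\sigma+a-\bar\sigma(a),\bar\sigma)$, which is exactly the coboundary relation; this matches isomorphism of the Galois $A$-algebras over $F$ on one side with equality of cohomology classes on the other. That the image of restriction lies in $\mathrm{H}^1(G_F,A)^G$ is automatic from inflation-restriction and needs no additional work. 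The main obstacle, in my view, is keeping straight the distinction between the invariance condition $\mathrm{H}^1(G_F,A)^G$ (which by itself does not force an extension to $k$) and the strictly stronger property of admitting a lift to a Galois $A\rtimes G$-algebra over $k$: the latter is precisely what picks out the image of $res_k^F$, and the content of the proposition is that this stronger property is exactly captured by the semidirect-product algebra structure. Once the construction $\xi\mapsto\phi_\xi$ is established and this bookkeeping is settled, the proposition follows.
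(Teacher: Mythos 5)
Your proof is correct, and it takes a genuinely different and arguably cleaner route than the paper's. The paper's argument proceeds through an explicit Kummer-theoretic description of $\mathrm{H}^1(G_F,E[n](F^s))$ in terms of pairs $(a,b)\in (F^\times/F^{\times n})^2$ and the associated algebras $L_a\otimes_F L_b$, then characterizes membership in the image of $res_k^F$ by the vanishing of the transgression in the inflation-restriction-transgression sequence, appealing to the standard fact that vanishing transgression is equivalent to the extension being the semidirect product. You instead observe directly that a continuous $1$-cocycle $\xi\colon G_k\to A$ (with $A=E[n](F)$ and $G=\mathrm{Gal}(F/k)$) is precisely a homomorphic section $\phi_\xi\colon\sigma\mapsto(\xi_\sigma,\bar\sigma)$ of the projection $A\rtimes G\to G$ lifting $G_k\twoheadrightarrow G$, that $A$-conjugation of $\phi_\xi$ is exactly the coboundary relation, and that restriction to $G_F$ recovers $res_k^F([\xi])$. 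This bypasses Kummer theory entirely and makes transparent \emph{why} the semidirect product appears, rather than quoting the transgression criterion as the paper does. The one place you should be careful when fleshing this out is the converse direction: a Galois $A\rtimes G$-algebra over $k$ corresponds to a full $A\rtimes G$-conjugacy class of homomorphisms $G_k\to A\rtimes G$, and the condition ``$A$-invariants identify with $F$'' must be interpreted as fixing the induced surjection $G_k\to G$ to be the canonical one so that only $A$-conjugacy remains --- this is the same notion of rigidity implicit in the paper's phrase ``over $F$'' for the Galois $A$-algebras, so the two statements agree, but it's worth making the identification explicit.
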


  \begin{proof}
    Since $G_{F}$ acts trivially on $E[n]$, we can realize $\mathrm{H}^1(G_F,E[n](F^s))$ as the set of isomorphism classes of Galois $(\mathbb{Z}/n\mathbb{Z}\times \mathbb{Z}/n\mathbb{Z})$-algebras $L$ over $F$ via \cite[(28.15) Example]{MR1632779}. If $t,s\in E[n](F)$ is a $\mathbb{Z}/n\mathbb{Z}$-basis for $E[n](F)$ and $\zeta_n\in F$ is a primitive $n$th root of unity, then this can be realized by the decomposition \[\mathrm{H}^1(G_F,E[n](F^s))\cong \mathrm{H}^1(G_F,\mathbb{Z}/n\mathbb{Z})\oplus \mathrm{H}^1(G_F,\mathbb{Z}/n\mathbb{Z})\cong \mathrm{H}^1(G_F,\mu_n(F^s))\oplus \mathrm{H}^1(G_F,\mu_n(F^s)).\] The rightmost group corresponds to pairs $(a,b)\in F^\times/F^{\times n}$ via Hilbert's theorem 90. Associated to such a pair $(a,b)$ is a pair of field extensions $F_a=F(a^{1/n})$ and $F_b=F(b^{1/n})$ which are naturally summands of $\mathbb{Z}/n\mathbb{Z}$-algebras $L_a,L_b$ constructed as in the exposition of subsection \ref{ss:cyclicalg}. The tensor product $L_{a,b}=L_a\otimes_F L_b$ is then the $(\mathbb{Z}/n\mathbb{Z}\times \mathbb{Z}/n\mathbb{Z})$-algebra over $F$ associated to the pair $(a,b)$.
    
    The action of $\mathrm{Gal}(F/k)$ by conjugation preserves such an algebra $L_{a,b}/F$ if and only if $L_{a,b}/k$ is also a $\mathrm{Gal}(F/k)$-algebra (which happens if and only if both $F_a/k$ and $F_b/k$ are Galois extensions of fields) and there is an isomorphism $\mathbb{Z}/n\mathbb{Z}\times \mathbb{Z}/n\mathbb{Z}\cong E[n](F)$ of $\mathrm{Gal}(F/k)$-modules which makes $L_{a,b}/k$ a Galois $G$-algebra for a group extension $G$ of $\mathrm{Gal}(F/k)$ by $\mathbb{Z}/n\mathbb{Z}\times \mathbb{Z}/n\mathbb{Z}$. Such an algebra $L_{a,b}/F$ is in the image of the restriction map if and only if the transgression of $L/F$ vanishes, i.e.\ if and only if $G\cong (\mathbb{Z}/n\mathbb{Z}\times \mathbb{Z}/n\mathbb{Z})\rtimes \mathrm{Gal}(F/k)$.
 \end{proof}

  \begin{remark}
    \label{remark:casesWhereRestrictionInjective}
    If the restriction map in Proposition \ref{proposition:describingH1InTermsOfGaloisExtensions} is injective, then one obtains a reasonably concrete description of $\mathrm{H}^1(G_{k},E[n](k^s))$ as a subset of $F^\times/F^{\times n}$.
  We give two natural settings where this happens.
    \begin{enumerate}
      \item{
        If $[F:k]$ and $n$ are coprime then the restriction map is an isomorphism.
      }
      \item{
        If $k=\mathbb{Q}$ and $n\geq 13$ is prime, then the restriction map is injective by \cite[Theorem 1]{MR3629657}.
      }
    \end{enumerate}
  \end{remark}
  
To give explicit examples of splitting of Severi--Brauer varieties, we first recall a characterization of abelian tamely ramified extensions of complete discretely valued fields with finite residue field.

\begin{lemma}
    \label{lemma:iffExtensionAbelian}
    Suppose $K$ is a complete discrete valued field with finite residue field, $k$, of characteristic $p$.
    Suppose $L/K$ is an extension with ramification index $e$. Suppose further that $(e,p)=1$.
    
    Then $L/K$ is abelian Galois if and only if $|k|\equiv 1\pmod{e}$ where $|k|$ is the number of elements in $k$. Equivalently, $L/K$ is abelian Galois if and only if $K$ contains a primitive $e$th root of unity.
\end{lemma}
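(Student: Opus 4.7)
The plan is to use the structure theory of tame ramification together with the uniqueness of tame totally ramified extensions over the maximal unramified extension. First I would dispense with the easy equivalence between the two right-hand conditions: since $k$ is finite of order $q$, the cyclic group $k^\times$ has order $q-1$ and contains a primitive $e$th root of unity if and only if $e \mid q-1$, i.e.\ $|k| \equiv 1 \pmod e$. Because $(e,p)=1$, Hensel's lemma lifts such roots of unity from $k$ to $K$, yielding the claimed reformulation.

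For the forward direction I would assume $L/K$ is abelian Galois and argue via the standard commutator relation for tame Galois groups. Let $L_0$ be the maximal unramified subextension, $f = [L_0:K]$, and $I = \mathrm{Gal}(L/L_0)$ the inertia subgroup. Tame theory gives that $I$ is cyclic of order $e$, generated by some $\sigma$ with $\sigma(\pi_L) = \zeta\, \pi_L$ for a uniformizer $\pi_L$ of $L$ and a primitive $e$th root of unity $\zeta \in L$, while $\mathrm{Gal}(L_0/K)$ is cyclic of order $f$ generated by Frobenius $\phi$. Since Frobenius raises any prime-to-$p$ root of unity to the $q$th power, choosing any lift of $\phi$ in $\mathrm{Gal}(L/K)$ one obtains $\phi \sigma \phi^{-1} = \sigma^q$. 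The abelian hypothesis forces $\sigma^q = \sigma$, i.e.\ $\sigma^{q-1}=1$, and since $\sigma$ has order exactly $e$ this gives $e \mid q-1$.

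For the backward direction, assume $\zeta_e \in K$ and fix a uniformizer $\pi_K$ of $K$. Over $K^{ur}$, every unit of $\mathcal{O}_{K^{ur}}$ is an $e$th power by Hensel (the residue field is separably closed and $(e,p)=1$), so any two tame totally ramified degree-$e$ extensions of $K^{ur}$ coincide, and both equal $K^{ur}(\pi_K^{1/e})$. Applying this to $L \cdot K^{ur}/K^{ur}$ places $L$ inside $M := K^{ur}(\pi_K^{1/e})$. Now $M/K$ is Galois, because the presence of $\zeta_e$ makes $K(\pi_K^{1/e})/K$ a degree-$e$ cyclic Kummer extension disjoint from $K^{ur}/K$; hence $\mathrm{Gal}(M/K) \cong \mathrm{Gal}(K^{ur}/K) \times \mathbb{Z}/e\mathbb{Z}$, which is abelian. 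Every subfield of $M$ containing $K$ is therefore abelian Galois over $K$, finishing the proof.

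The main obstacle is the backward direction, since the hypothesis only says $L$ has ramification index $e$ and not that $L/K$ is Galois. The crucial observation is that $L \cdot K^{ur}$ is forced to be the unique tame totally ramified degree-$e$ extension of $K^{ur}$, which embeds $L$ into a standard abelian Galois tower over $K$ and bypasses the need to explicitly produce a uniformizer of $L$ whose $e$th power lies in $K$.
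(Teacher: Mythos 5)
Your proof is correct. The paper does not give its own argument---it cites Hasse's \emph{Number Theory} for the result---so there is no internal proof to compare against; you have supplied a self-contained one from standard tame ramification theory. The forward direction correctly uses the Galois-equivariance of the tame inertia character to obtain $\phi\sigma\phi^{-1}=\sigma^q$ for any Frobenius lift $\phi$, and hence $\sigma^{q-1}=1$ once the group is abelian, so $e\mid q-1$. The backward direction correctly exploits that $K^{ur}$ has a \emph{unique} tame totally ramified extension of each degree $e$ coprime to $p$ (so $L\cdot K^{ur}=K^{ur}(\pi_K^{1/e})=:M$), that $M/K$ is abelian because $K(\pi_K^{1/e})/K$ is Kummer once $\zeta_e\in K$ and is linearly disjoint from $K^{ur}/K$ (being totally ramified), and that every intermediate field of an abelian extension is automatically abelian Galois over the base. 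One small point of precision in the forward direction: the exact relation $\sigma(\pi_L)=\zeta\pi_L$ requires choosing the uniformizer $\pi_L$ so that $\pi_L^e\in L_0$, which is possible because $L/L_0$ is totally and tamely ramified; for an arbitrary uniformizer one has this only as a congruence. This does not affect the conclusion since the tame character is defined on residues, but it is worth saying explicitly.
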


  \begin{proof}
  See \cite[Chapter 16, p.\ 251]{MR0562104}.
  \end{proof}

\begin{theorem}\label{thm:quadtwist}
Let $n>2$ be an odd integer. Fix a prime number $p$ such that $p\neq 2$ and such that $p$ does not divide $n$. Let $k/\mathbb{Q}_p$ be a $p$-adic number field containing a primitive $n$th root of unity. Assume also that there is an elliptic curve $E$ over $k$ with full level $n$-structure. 

Let $L/k$ be a quadratic extension and write $E_L$ for the quadratic twist of $E$ with respect to $L$. Then there does not exist any nontrivial principal homogeneous space $C$ for $E_L$ of period dividing $n$.
\end{theorem}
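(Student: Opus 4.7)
The goal is to show $\mathrm{H}^1(G_k, E_L(k^s))[n] = 0$, which by the Kummer sequence for $E_L$ follows from $\mathrm{H}^1(G_k, E_L[n](k^s)) = 0$. My plan has two parts: first, show restriction to $G_L$ is injective; second, show its image is trivial by interpreting it via Galois algebras and ruling these out by local ramification theory. Since $E/k$ has full level-$n$ structure, $G_k$ acts trivially on $E[n]$, and consequently acts on $E_L[n](k^s) \cong (\mathbb{Z}/n\mathbb{Z})^2$ through the quadratic character $\chi$ of $L/k$ (an action by $\pm 1$); in particular $k(E_L[n]) = L$. A direct calculation gives $\mathrm{H}^1(\mathrm{Gal}(L/k), E_L[n](L)) = 0$ when $n$ is odd, since the norm map is $1 + (-1) = 0$ and the augmentation $1 - (-1) = 2$ is invertible on a module of odd order. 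Inflation-restriction then yields the injectivity of $\mathrm{res}\colon \mathrm{H}^1(G_k, E_L[n]) \hookrightarrow \mathrm{H}^1(G_L, E_L[n])^{\mathrm{Gal}(L/k)}$.

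For the image, I would apply Proposition \ref{proposition:describingH1InTermsOfGaloisExtensions} to $E_L$ (with $F=L$) to identify it with isomorphism classes of Galois $(\mathbb{Z}/n\mathbb{Z})^2$-algebras $N/L$ that extend to Galois $((\mathbb{Z}/n\mathbb{Z})^2 \rtimes \mathbb{Z}/2\mathbb{Z})$-algebras over $k$ with $\mathbb{Z}/2\mathbb{Z}$ acting by $-1$. A nontrivial such algebra corresponds to a continuous homomorphism $G_k \to (\mathbb{Z}/n\mathbb{Z})^2 \rtimes \mathbb{Z}/2\mathbb{Z}$ whose $\mathbb{Z}/2\mathbb{Z}$-component is $\chi$ and whose restriction to $G_L$ has nontrivial image in $(\mathbb{Z}/n\mathbb{Z})^2$. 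Choosing a prime $\ell \mid n$ dividing the order of this image and projecting $(\mathbb{Z}/n\mathbb{Z})^2$ onto a one-dimensional $\mathbb{F}_\ell$-quotient (which is automatically $\mathbb{Z}/2\mathbb{Z}$-equivariant, since the action on both sides is by $-1$), one obtains a surjection $G_k \twoheadrightarrow D_\ell$ onto the dihedral group $D_\ell = \mathbb{Z}/\ell\mathbb{Z} \rtimes \mathbb{Z}/2\mathbb{Z}$ of order $2\ell$. This cuts out a Galois dihedral extension $N_0/k$ of degree $2\ell$ containing $L$; it suffices to rule out such an extension.

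The remaining argument is ramification-theoretic. Since $\gcd(2\ell, p) = 1$, the extension $N_0/k$ is tame, so $\mathrm{Gal}(N_0/k) \cong D_\ell$ must arise as a quotient of the group $\langle \phi, \sigma \mid \phi\sigma\phi^{-1} = \sigma^q \rangle$ generated by a Frobenius lift and tame inertia, where $q$ is the residue-field size of $k$. The image of inertia must be a normal, cyclic subgroup of $D_\ell$; for $\ell$ an odd prime, the only option is the rotation subgroup $\langle r \rangle \cong \mathbb{Z}/\ell\mathbb{Z}$ (the alternatives $\{1\}$ and $D_\ell$ both force $D_\ell$ to be cyclic). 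Then $\sigma$ maps to some $r^a$ with $\gcd(a,\ell)=1$ and $\phi$ maps to some reflection $sr^b$, so the relation $\phi\sigma\phi^{-1} = \sigma^q$ becomes $r^{-a} = r^{aq}$, forcing $q \equiv -1 \pmod{\ell}$. However, $k$ contains $\zeta_n$, hence $\zeta_\ell$, and since $\ell \neq p$ Hensel's lemma gives $\zeta_\ell \in \mathbb{F}_q$, so $q \equiv 1 \pmod{\ell}$. Together these force $\ell \mid 2$, contradicting that $\ell$ is odd. The main obstacle is precisely this dihedral-extension analysis; once the dihedral relation demands $q \equiv -1$ while the primitive-root-of-unity hypothesis demands $q \equiv 1$, the proof is complete.
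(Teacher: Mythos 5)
Your strategy matches the paper's proof of Theorem~\ref{thm:quadtwist} in essentially every structural step: reduce to $\mathrm{H}^1(G_k,E_L[n](k^s))=0$; restrict to $G_L$ using coprimality of $[L:k]=2$ with the odd integer $n$; interpret the image via Proposition~\ref{proposition:describingH1InTermsOfGaloisExtensions} as Galois $(\mathbb{Z}/n\mathbb{Z}\times\mathbb{Z}/n\mathbb{Z})\rtimes\mathbb{Z}/2\mathbb{Z}$-algebras with $\mathbb{Z}/2\mathbb{Z}$ acting by $-1$; extract a dihedral extension of $k$; and rule it out by tame ramification, since $\zeta_n\in k$ forces the residue cardinality to be $\equiv 1\pmod{\ell}$. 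Your inflation--restriction computation and the explicit presentation of the tame Galois group via Frobenius and inertia simply unpack what the paper delegates to Lemma~\ref{lemma:iffExtensionAbelian}.

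There is, however, a real gap in the way you reduce to a prime $\ell$. You propose to choose a surjection $(\mathbb{Z}/n\mathbb{Z})^2\twoheadrightarrow\mathbb{Z}/\ell\mathbb{Z}$ whose composite with $G_L\to(\mathbb{Z}/n\mathbb{Z})^2$ is nontrivial, but every homomorphism $(\mathbb{Z}/n\mathbb{Z})^2\to\mathbb{Z}/\ell\mathbb{Z}$ factors through $(\mathbb{Z}/n\mathbb{Z})^2/\ell(\mathbb{Z}/n\mathbb{Z})^2\cong(\mathbb{Z}/\ell\mathbb{Z})^2$. If the image $I$ of $G_L$ happens to lie inside $\ell(\mathbb{Z}/n\mathbb{Z})^2$ (for instance $n=\ell^2$ and $I=\ell(\mathbb{Z}/n\mathbb{Z})^2\cong(\mathbb{Z}/\ell\mathbb{Z})^2$), then no such projection exists and the image of $G_k$ in $\mathbb{Z}/\ell\mathbb{Z}\rtimes\mathbb{Z}/2\mathbb{Z}$ is only $\mathbb{Z}/2\mathbb{Z}$, not a dihedral group, so the ramification contradiction never appears. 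Two short fixes: as the paper does, reduce to $n$ prime at the outset by noting that a nontrivial principal homogeneous space of period dividing $n$ produces one of prime period; or, alternatively, project the image $H=I\rtimes\mathbb{Z}/2\mathbb{Z}$ of $G_k$ rather than the ambient $(\mathbb{Z}/n\mathbb{Z})^2$, observing that the nontrivial odd-order abelian group $I$ admits a cyclic quotient $\mathbb{Z}/\ell\mathbb{Z}$ for some prime $\ell\mid |I|$, the $-1$ action descends, and hence $H\twoheadrightarrow D_\ell$ and $G_k\twoheadrightarrow D_\ell$. With either patch your argument is complete and coincides with the paper's.
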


\begin{proof}
It suffices to assume that $n$ is prime, possibly replacing $n$ with a prime divisor of $n$. By Remark \ref{rmk:Kummer}, it also suffices to show that $\mathrm{H}^1(G_k,E_L[n](k^s))=0$. The degree $[L:k]$ is relatively prime to $n$ and so the restriction map induces an isomorphism \[\mathrm{H}^1(G_k,E_L[n](k^s))\rightarrow \mathrm{H}^1(G_L,E_L[n](L^s))^{\mathrm{Gal}(L/k)}.\] According to Proposition \ref{proposition:describingH1InTermsOfGaloisExtensions}, a nontrivial element of $\mathrm{H}^1(G_L,E_L[n](L^s))^{\mathrm{Gal}(L/k)}$ corresponds to a  Galois $(\mathbb{Z}/n\mathbb{Z}\times \mathbb{Z}/n\mathbb{Z})\rtimes \mathbb{Z}/2\mathbb{Z}$-algebra $F/k$ where $\mathbb{Z}/2\mathbb{Z}$ acts on $\mathbb{Z}/n\mathbb{Z}\times \mathbb{Z}/n\mathbb{Z}$ as $-1$.

Contained in $F$ is a Galois field extension $K/k$ which has $\mathrm{Gal}(K/k)\cong D_{n}$ the dihedral group of order $2n$. But this is impossible since every extension of order $2n$ is cyclic by Lemma \ref{lemma:iffExtensionAbelian}.
\end{proof}

Using Theorem \ref{thm:quadtwist}, we can produce examples in the spirit of those following \cite[Corollary 3]{https://doi.org/10.48550/arxiv.2105.09986}.

\begin{corollary}\label{cor: exmpc}
For any odd integer $n>2$, there exist infinitely many primes $p>2$ such that the following conditions all hold simultaneously:
\begin{enumerate}
\item\label{cor:1unity} the $p$-adic field $\mathbb{Q}_p$ contains a primitive $n$th root of unity;
\item\label{cor:1curve} over $\mathbb{Q}_p$ there exists an elliptic curve $E$ with full level-$n$ structure;
\item\label{cor:1split} if $A/\mathbb{Q}_p$ is a central simple algebra with $\mathrm{ind}(A)$ dividing $n$, then $\mathrm{SB}(A)$ is split by a principal homogeneous space for $E$;
\item\label{cor:1nosplit} if $A/\mathbb{Q}_p$ is a central simple algebra with $\mathrm{ind}(A)$ dividing $n$ then, for any quadratic extension $L/\mathbb{Q}_p$, there does not exist any principal homogeneous space for the quadratic twist $E_L$ which splits $A$.
\end{enumerate}
\end{corollary}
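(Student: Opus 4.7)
My plan is to assemble the four conditions from the results already developed, after first fixing the family of primes. For condition \eqref{cor:1unity}, Dirichlet's theorem on primes in arithmetic progressions produces infinitely many primes $p \equiv 1 \pmod{n}$ (necessarily $p > 2$ since $n \geq 3$ forces $p \geq n+1 \geq 4$); such a $p$ has $n \mid |\mathbb{F}_p^\times|$, so $\mathbb{F}_p$ and hence $\mathbb{Q}_p$ contain a primitive $n$th root of unity. For condition \eqref{cor:1curve}, I would imitate Remark \ref{rmk:qpnstructure}: the modular curve $X(n)$ is smooth and projective over $\mathrm{Spec}(\mathbb{Z}[1/n])$ with a number of cusps depending only on $n$, so the Hasse--Weil bound guarantees a non-cuspidal $\mathbb{F}_p$-point for all sufficiently large $p$, and such a point lifts by Hensel's lemma to a $\mathbb{Q}_p$-point of $X(n)$, i.e.\ to an elliptic curve $E/\mathbb{Q}_p$ with full level-$n$ structure. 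Restricting to primes $p \equiv 1 \pmod n$ large enough to apply Hasse--Weil yields infinitely many admissible $p$, and for each one an entire $p$-adic disk of lifts, hence infinitely many such $E/\mathbb{Q}_p$.

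Next, for condition \eqref{cor:1split}, I would use that over a $p$-adic local field every central simple algebra is cyclic (by local class field theory). If $\mathrm{ind}(A) = d \mid n$, the class $[A]$ is represented by a degree-$d$ cyclic algebra, which in turn is Brauer-equivalent to a degree-$n$ Galois symbol by Lemma \ref{lem:algebra}. Corollary \ref{cor:cyclic}, applied to any point $t \in E[n](\mathbb{Q}_p)$ of exact order $n$ afforded by the full level-$n$ structure, shows that every degree-$n$ Galois symbol lies in $\mathrm{Im}(\Delta_t)$; Corollary \ref{cor: splittoo} then produces the desired principal homogeneous space of $E$ splitting $A$.

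For condition \eqref{cor:1nosplit}, the hypotheses of Theorem \ref{thm:quadtwist} are all in place, so for every quadratic extension $L/\mathbb{Q}_p$ we obtain $\mathrm{WC}(E_L/\mathbb{Q}_p)[n] = 0$. Suppose, toward a contradiction, that $A$ is nontrivial with $\mathrm{ind}(A) = d \mid n$ and that $C$ were a principal homogeneous space of $E_L$ of period $m$ splitting $A$. Roquette's theorem \cite{MR0201435} combined with the Lichtenbaum--Tate equality of period and index for genus-one curves over $p$-adic fields \cite{MR0237506} forces $d \mid m$. Writing $m = dq$, the element $q\cdot[C] \in \mathrm{WC}(E_L/\mathbb{Q}_p)$ has order exactly $d$, which divides $n$, and must therefore vanish by Theorem \ref{thm:quadtwist}; this forces $m \mid q$, hence $d = 1$, contradicting the nontriviality of $A$.

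The main obstacle I expect is not conceptual but bookkeeping: verifying the Hasse--Weil count for $X(n)$ uniformly alongside the congruence $p \equiv 1 \pmod n$, and checking that the cyclic decomposition of Lemma \ref{lem:algebra} lines up with the image of $\Delta_t$ given by Corollary \ref{cor:cyclic}. All the essential mathematical content is already in Theorem \ref{thm:quadtwist}, Corollaries \ref{cor:cyclic} and \ref{cor: splittoo}, and the $p$-adic period-index identity; the corollary is essentially their assembly.
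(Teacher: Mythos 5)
Your proof is correct and takes essentially the same approach as the paper: Dirichlet's theorem for condition \eqref{cor:1unity}, the argument of Remark \ref{rmk:qpnstructure} for \eqref{cor:1curve}, the cyclicity of central simple algebras over $\mathbb{Q}_p$ together with Corollary \ref{cor:cyclic} for \eqref{cor:1split}, and Roquette's theorem combined with Theorem \ref{thm:quadtwist} for \eqref{cor:1nosplit}. Your treatment of \eqref{cor:1nosplit} is slightly more explicit than the paper's, spelling out the use of the Lichtenbaum--Tate period--index equality and the multiplication-by-$q$ reduction to a class of period dividing $n$, both of which the paper leaves implicit.
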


\begin{proof}
Dirichlet's theorem on arithmetic progressions implies that there are infinitely many primes such that (\ref{cor:1unity}) is true. By Remark \ref{rmk:qpnstructure}, we also know that (\ref{cor:1curve}) is true for all large primes $p$. Now since all central simple algebras over $\mathbb{Q}_p$ are cyclic, Corollary \ref{cor:cyclic} implies (\ref{cor:1split}) for all such curves $E$ as in (\ref{cor:1curve}).

Finally, if $L/\mathbb{Q}_p$ is a quadratic extension, $E_L$ the associated quadratic twist, and $A/\mathbb{Q}_p$ a central simple algebra of period dividing $n$, then there exists a principal homogeneous space for $E_L$ which splits $A$ only if there exists a nontrivial principal homogeneous space for $E_L$ of period divisible by a factor of $n$ by Roquette's theorem \cite[Theorem 1]{MR0201435}. But this is impossible by Theorem \ref{thm:quadtwist}.
\end{proof}

\begin{remark}
For a fixed $p$-adic field satisfying (\ref{cor:1unity}) of Corollary \ref{cor: exmpc}, there will exist infinitely many curves $E$ satisfying (\ref{cor:1curve}) due to the ampleness of local fields. All such curves will also satisfy (\ref{cor:1split}) and (\ref{cor:1nosplit}). Hence one can produce both infinitely many $p$-adic fields and infinitely many curves for a given field as claimed in Theorem D of the introduction.
\end{remark}
  
\bibliographystyle{amsalpha}
\bibliography{bib}
\end{document}